\documentclass[letterpaper,10pt]{article}
\usepackage[utf8]{inputenc}
\usepackage{amsmath}
\usepackage{amssymb}
\usepackage{amsthm}
\usepackage{enumerate}
\usepackage{colonequals}
\usepackage{color}
\usepackage{hyperref}
\usepackage{datetime}
\usepackage{graphicx}
\newcommand{\inner}[2]{\langle#1,#2\rangle}
\newcommand{\norm}[1]{\left\lVert#1\right\rVert}
\newcommand{\abs}[1]{\left\lvert#1\right\rvert}
\newcommand{\bbR}{\mathbb R}

\DeclareMathOperator{\vol}{vol}

\DeclareMathOperator{\reg}{reg}

\newcommand{\measurerestr}{%
  \,\raisebox{-.127ex}{\reflectbox{\rotatebox[origin=br]{-90}{$\lnot$}}}\,%
}

\newtheorem{thm}{Theorem}[section]
\newtheorem{propn}[thm]{Proposition}
\newtheorem{lemma}[thm]{Lemma}
\newtheorem{cor}[thm]{Corollary}
\theoremstyle{definition}
\newtheorem{rmk}[thm]{Remark}
\newtheorem{defn}[thm]{Definition}
\newtheorem*{notn}{Notation}

\title{The index of shrinkers of the mean curvature flow}
\author{Zihan Hans Liu}
\date{}
\begin{document}
\maketitle
\begin{abstract}
We introduce a notion of index for shrinkers of the mean curvature flow. We then prove a gap theorem for the index of rotationally symmetric immersed shrinkers in $\bbR^3$, namely, that such shrinkers have index at least 3, unless they are one of the stable ones: the sphere, the cylinder, or the plane. We also provide a generalization of the result to higher dimensions.
\end{abstract}
\section{Introduction}
Self-similar shrinkers of the mean curvature flow are submanifolds of $\bbR^{n+1}$ which satisfy the equation
\begin{equation*}
\vec H + \frac 12x^\perp = 0\text.
\end{equation*}
They arise naturally as tangent flows at singularities of the mean curvature flow: If the spacetime point $(x_0, t_0)$ is a singularity of a mean curvature flow, then (subsequential) limits of parabolic rescalings of the flow around this point will converge to a shrinker; see \cite{huisken} and \cite{ilmanensingularities}. For this reason, the stability (or instability) of those singularities is related to the stability of shrinkers. 

Self-shrinkers are stationary with respect to the Gaussian integral
\begin{equation*}
F_{0,1}(\Sigma) = (4\pi)^{-n/2} \int_\Sigma e^{-\abs{x}^2/4}\text,
\end{equation*}
and in fact, by Huisken's monotonicity formula \cite{huisken}, this integral is non-increasing under a rescaled version of mean curvature flow. 

If one na\"ively considers stability with respect to this functional, all shrinkers turn out to be unstable. Indeed, if $\Sigma_s$ is a one-parameter family of translations or dilations of $\Sigma$, with $\Sigma_0 = \Sigma$, then $s \mapsto F(\Sigma_s)$ has a local maximum at $s=0$; see \cite{cm} or \cite{rigger}. But our real interest is in the stability of mean curvature flow singularity, for which the shrinker is a blow-up limit. In that context, translations and dilations of the shrinker correspond to moving the singularity around in space and time without changing its shape. As such, they do not represent true instabilities of the singularity. 

Instead, in \cite{cm}, Colding and Minicozzi introduce an entropy functional, obtained by taking the supremum of the $F$-functional centered at different scales. This entropy is monotonically decreasing under the mean curvature flow. Based on this, they introduce the notion of entropy-stability for shrinkers, which better captures what it means for the corresponding singularity to be stable. They then show that the only entropy-stable self-shrinking hypersurfaces are the sphere, the cylinders, and the plane.

To do so, they define a different stability notion for shrinkers, which they call $F$-stability. Essentially, this is the same as linear stability of the shrinker with respect to the $F$-functional, except that one ``quotients out'' by the linear subspace generated by translations and dilatations of the shrinker. Colding and Minicozzi show that entropy-stability and $F$-stability are closely related, and in fact equivalent except when the shrinker is the isometric product of a line with a lower-dimensional submanifold.

In this work, we introduce the notions of entropy index and $F$-index of a shrinker. Simply put, these count the number of linearly independent variations that demonstrate entropy-instability and $F$-instability in Colding's and Minicozzi's sense. A more precise definition of $F$-index will be given as Definition~\ref{unstabledefn}, and a more precise definition of entropy index as Definition~\ref{entropyindexdefn}. We will first prove results for the $F$-index, and then in Section~\ref{entropyproof}, we will relate these results to the entropy index.

This work focuses on smooth self-shrinking hypersurfaces with a rotational symmetry. The results are easiest to state in dimension 3, where we can give the following as a special case of corollaries to our main theorem:
\begin{thm}
\label{3dversion}Let $\Sigma^2$ be an immersed, oriented shrinker with polynomial volume growth. Suppose that it is rotationally symmetric, but not isometric to the sphere, the cylinder, or the plane. Then it has $F$-index and entropy index at least 3.
\end{thm}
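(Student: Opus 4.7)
The plan is to exploit the $SO(2)$-symmetry of $\Sigma$ to split the Colding--Minicozzi stability operator $L$ (whose positive eigenspaces govern $F$-instability) into angular Fourier modes, and then produce enough positive eigenvalues in two separate modes to overcome the trivial variations quotiented out of $F$-index. Place the symmetry axis along the $z$-axis and let $\gamma(s) = (r(s), z(s))$ parametrize the generating meridian by arc length. The weighted Hilbert space $L^2(\Sigma, e^{-\abs{x}^2/4}\,d\vol)$ decomposes as $\bigoplus_{k \geq 0} V_k$, where $V_k$ is spanned by functions of the form $v(s)\cos(k\theta)$ and $v(s)\sin(k\theta)$. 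Since $L$ commutes with the rotation action, it preserves every $V_k$, on which it acts as a one-dimensional Schrödinger-type operator $L_k$ on functions of $s$, differing from $L_0$ by an additional $-k^2/r(s)^2$ potential. The $F$-index splits as $\mathrm{ind}(L_0) + 2\sum_{k \geq 1}\mathrm{ind}(L_k)$ minus the dimension of the trivial subspace.

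The trivial instabilities sit in low modes: axial translation ($v = n_z$) and dilation ($v = H$) contribute two known positive eigenvalues to $L_0$, while the two perpendicular translations ($v = n_r$, $\cos\theta$ and $\sin\theta$ parts) contribute one known positive eigenvalue to $L_1$ with geometric multiplicity $2$ on $\Sigma$. A further ``rotation'' variation $v = z n_r - r n_z$ in $V_1$ is a known $0$-eigenfunction of $L_1$, arising because rotations of $\bbR^3$ about an axis perpendicular to the $z$-axis act as isometries preserving both shrinkers and the $F$-functional. To reach $F$-index $\geq 3$ it then suffices to show that $L_0$ has a third positive eigenvalue beyond the two trivial ones \emph{and} that $L_1$ has a second positive eigenvalue beyond the single trivial one, since a single $V_1$ eigenvalue doubles on $\Sigma$ to give two unstable directions, for a total of $1 + 2 = 3$.

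The extra eigenvalues are extracted by Sturm--Liouville oscillation theory for the 1D ODEs $L_k v = \lambda v$ on the meridian, combined with the min-max principle. The key observation is that for a non-sphere/cylinder/plane rotationally symmetric shrinker, the known eigenfunctions of $L_0$ (namely $n_z$ and $H$) and of $L_1$ (namely $n_r$ and $z n_r - r n_z$) cannot sit at the top of their respective spectra: the non-trivial geometry of the meridian forces these eigenfunctions to have enough nodes that oscillation/comparison arguments produce strictly larger eigenvalues above them. The central obstacle, and the technical heart of the argument, is making this oscillation count rigorous in the weighted, possibly singular setting. The meridian may meet the axis $r = 0$, where $L_k$ develops a singular potential requiring careful boundary analysis, and for non-compact shrinkers of polynomial volume growth one must verify that the weighted $L^2$-spectrum still obeys the standard Sturm--Liouville dictionary relating zeros of eigenfunctions to spectral counts. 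Once the $F$-index lower bound is in hand, the matching bound for the entropy index follows from the comparison developed in Section~\ref{entropyproof}.
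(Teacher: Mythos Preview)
Your overall architecture---Fourier-decompose $L$ along the $SO(2)$-action and then use sign changes of the known Jacobi fields to push the restricted ground states strictly below the trivial eigenvalues $-1$ and $-\tfrac12$---is exactly the paper's strategy, and your count ($1$ new direction in mode~$0$ plus a doubled new direction in mode~$1$) matches the three functions $f_0,f_1,g_1$ of Lemma~\ref{unstablevariations}. The rotation field $z\,n_r - r\,n_z$ you introduce is extraneous; the paper never uses it.

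There is, however, a genuine gap in the passage from ``extra negative eigenvalues of $L$'' to ``$F$-unstable variations.'' Your formula $F\text{-index} = \operatorname{ind}(L_0) + 2\sum_{k\ge1}\operatorname{ind}(L_k) - \dim(\text{trivials})$ is not established, and it is not how the paper proceeds. By Definition~\ref{unstabledefn} a variation $f$ is $F$-unstable only when the full second variation of Proposition~\ref{secondvariationformula} (including the $hH$ and $y\cdot\mathbf n$ cross-terms) is negative for \emph{every} $(y,h)$. Your subtraction of trivials is tantamount to assuming $[fH]_\Sigma = [f\,(y\cdot\mathbf n)]_\Sigma = 0$, which would follow from eigenfunction orthogonality if $\Sigma$ were compact, but in the non-compact case the Jacobi fields $H$, $n_z$, $n_r$ are not compactly supported while the $F$-index is defined through compactly supported test functions on balls $D_R$. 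The paper replaces exact orthogonality by the almost-orthogonality Lemmas~\ref{timespaceorthogonal}--\ref{timespaceuniform} and then checks directly (Lemma~\ref{unstablevariations}, via Lemma~\ref{variationpartialbounds}) that every nonzero $\alpha f_0+\beta f_1+\gamma g_1$ is $F$-unstable; this step is the missing ingredient in your outline, not merely a matter of making Sturm--Liouville theory rigorous at $r=0$. A second, smaller gap: the claim that the trivial eigenfunctions ``have enough nodes'' requires outside input. That $e_r\cdot\mathbf n$ changes sign comes from the Kleene--M{\o}ller classification of the profile curve (proof of Corollary~\ref{maincor}), while the sign change of $H$ is a global consequence of Colding--Minicozzi's classification of mean-convex shrinkers (Lemma~\ref{hchangessign}), not a meridian-local oscillation count.
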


The main theorem itself is valid in any dimension, and we will only need one degree of rotational symmetry. The price is that we need to assume an extra geometric condition:
\begin{thm}\label{maintheorem}
Let $\Sigma^n \subseteq \bbR^{n+1}$ be an immersed, oriented shrinker with polynomial volume growth, and assume that $\Sigma$ is invariant under a subgroup $SO(2) \subset SO(n+1)$ of rotations of the ambient $\bbR^{n+1}$. Assume that $e_r \cdot \mathbf n$ takes on both positive and negative values on~$\Sigma$, where $\mathbf n$ is the unit normal to~$\Sigma$, and $e_r$ is the vector field 
in~$\bbR^{n+1}$ consisting of unit vector pointing away from the ``rotational axis'', i.e. the fixed-point set of the symmetry group~$SO(2)$. Then $\Sigma$ has $F$-index at least~3.
\end{thm}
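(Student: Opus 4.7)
The plan is to exploit the $SO(2)$-symmetry to block-diagonalize the stability operator $L$ and then run a Perron--Frobenius argument in two different Fourier modes, producing three unstable directions that are linearly independent from translations and dilations.

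Since $L$ commutes with the $SO(2)$-action on $\Sigma$, any normal variation admits a Fourier expansion in the rotational angle $\theta$ as $u = u_0 + \sum_{k\geq 1}(u_k^c\cos k\theta + u_k^s\sin k\theta)$, and $L$ preserves each mode, acting as a reduced operator $L_k$ on the orbit space $\Sigma/SO(2)$, related by $L_k = L_0 - k^2/r^2$. The trivial variations split accordingly: $H$ lies in the zero mode with $L_0 H = H$; the $n-1$ axial translations $\inner{e_i}{\mathbf n}$ (for $e_i$ fixed by $SO(2)$) are also in the zero mode, with $L_0$-eigenvalue $\tfrac12$; and the two rotation-plane translations $\inner{e_1}{\mathbf n} = n_r\cos\theta$, $\inner{e_2}{\mathbf n} = n_r\sin\theta$ (with $n_r = e_r\cdot\mathbf n$) lie in the first harmonic, giving $L_1 n_r = \tfrac12 n_r$.

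I then apply the Perron--Frobenius argument in each of two modes. In the first harmonic: since $n_r$ takes both signs on the orbit space by hypothesis, it cannot be the ground state of $L_1$; hence $L_1$ has a top eigenvalue strictly greater than $\tfrac12$, with some eigenfunction $\psi$ orthogonal to $n_r$, and the two lifts $\psi\cos\theta$ and $\psi\sin\theta$ yield two unstable directions linearly independent from the radial translations. In the zero mode: I invoke the Colding--Minicozzi/Huisken rigidity theorem, namely that the only smooth complete self-shrinkers with polynomial volume growth and $H \geq 0$ are the hyperplane, the sphere, and the cylinders, each of which is easily ruled out by $n_r$ changing sign, so $H$ must change sign on $\Sigma$. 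The same argument applied to $L_0 H = H$ then gives an eigenvalue of $L_0$ strictly greater than $1$, with eigenfunction orthogonal to $H$ and the axial translations, producing the third unstable direction. Orthogonality of Fourier modes and of distinct eigenspaces ensures that the three new directions are transverse to the trivial subspace, so $\Sigma$ has $F$-index at least $3$.

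The main obstacles I expect are: (i) a careful setup of the Fourier reduction near points where $\Sigma$ meets the rotational axis, so that the singular $-k^2/r^2$ terms in $L_k$ and the required $O(r^{|k|})$ asymptotic vanishing of eigenfunctions are handled correctly; (ii) justifying the Perron--Frobenius statement for $L_0$ and $L_k$ in the Gaussian-weighted, possibly non-compact setting, which should reduce to standard theory once one uses that polynomial volume growth yields a discrete spectrum; and (iii) verifying that the Colding--Minicozzi/Huisken classification, usually stated for embedded hypersurfaces, applies in the immersed rotationally symmetric setting considered here, or otherwise arguing directly that $H$ changes sign whenever $n_r$ does.
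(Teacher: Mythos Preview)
Your proposal is correct and follows essentially the same route as the paper: Fourier-decompose $L$ via the $SO(2)$-action, use that $H$ (mode $0$) and $n_r$ (mode $1$) are sign-changing eigenfunctions to force the bottom of each restricted spectrum strictly below $-1$ and $-\tfrac12$, and harvest three unstable directions; your anticipated obstacles (i)--(iii) line up almost exactly with the paper's technical lemmas. The one implementation point worth flagging is that in the non-compact case the paper does \emph{not} rely on a global discrete spectrum or on exact $L^2$-orthogonality of the new eigenfunctions to $H$ and $y\cdot\mathbf n$ (which fails for Dirichlet eigenfunctions on a truncated domain $D_R$, and can fail globally if $|A|^2$ is unbounded); instead it uses the strict eigenvalue gap directly, together with an ``almost-orthogonality'' estimate $[H(y\cdot\mathbf n)]_{D_R}\to 0$, to control the cross terms in the second variation formula.
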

The additional geometric condition can be understood as ruling out immersed shrinkers which are locally ``radial graphs'' over the fixed-point set of the~$SO(2)$. If we have a full~$SO(n)$ worth of rotational symmetry, then this condition automatically holds:
\begin{cor}
\label{maincor}
Let $\Sigma^n \subseteq \bbR^{n+1}$ be an immersed, oriented shrinker with polynomial volume growth. Assume that $\Sigma$ is invariant under a subgroup $SO(n) \subset SO(n+1)$ of the rotations of the ambient~$\bbR^{n+1}$, 
and that $\Sigma$ is not isometric to either the plane, the cylinder $S^{n-1}(\sqrt{2(n-1)}) \times \bbR$, or the sphere $S^n(\sqrt{2n})$. Then $\Sigma$ has $F$-index at least~3.
\end{cor}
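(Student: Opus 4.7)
The plan is to reduce Corollary~\ref{maincor} to Theorem~\ref{maintheorem} by fixing any subgroup $SO(2) \subset SO(n)$ and verifying the sign hypothesis on $e_r \cdot \mathbf n$. Choose coordinates so that the given $SO(n)$ fixes the $x_1$-axis and rotates the last $n$ coordinates, and let the chosen $SO(2)$ act on the $(x_n, x_{n+1})$-plane; its fixed-point set is a codimension-$2$ subspace containing the rotation axis. Then $\Sigma$ is the $SO(n)$-orbit of a profile curve $\gamma(t) = (s(t), r(t))$ in the half-plane $\{r \geq 0\}$. Writing a point of $\Sigma$ as $(s(t), r(t)\omega)$ for $\omega \in S^{n-1} \subset \bbR^n$, taking the unit normal $\mathbf n \propto (-r'(t), s'(t)\omega)$, and evaluating $e_r$ at this point yields
\begin{equation*}
e_r \cdot \mathbf n \;=\; \frac{s'(t)\,\sqrt{\omega_n^2 + \omega_{n+1}^2}}{|\gamma'(t)|}.
\end{equation*}
Hence $e_r \cdot \mathbf n$ attains both positive and negative values on $\Sigma$ exactly when $s'(t)$ does on $\gamma$, i.e.\ when $\gamma$ is not a monotone graph over the rotation axis.

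What remains is to show that if $\gamma$ \emph{is} a monotone graph, including the degenerate case $s'\equiv 0$, then $\Sigma$ is the plane, cylinder, or sphere. If $s$ is constant, the shrinker equation forces $\gamma$ to be a radial ray through the origin, and $\Sigma$ is the hyperplane $\{x_1 = 0\}$. Otherwise $\gamma$ is a smooth graph $r = f(s)$, which makes $\Sigma$ an embedded $SO(n)$-symmetric shrinker topologically either $\bbR \times S^{n-1}$ (entire graph) or $S^n$ (graph over a finite interval with $f$ vanishing smoothly at the endpoints). The shrinker equation reduces to a second-order ODE for $f$; the only complete solutions compatible with polynomial volume growth are $f \equiv \sqrt{2(n-1)}$ (the cylinder) and $f(s) = \sqrt{2n - s^2}$ (the sphere). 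Granted this, Theorem~\ref{maintheorem} applies and yields $F$-index at least~$3$.

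The main obstacle is the final classification step: ruling out exotic graphical $SO(n)$-symmetric shrinker profiles. I expect to handle this either by a direct phase-plane analysis of the ODE for $f$ (using the polynomial volume growth condition to exclude blow-up/asymptotic behavior other than the standard ones), or by invoking the classification of complete $SO(n)$-invariant self-shrinkers due to Kleene and M{\o}ller, from which the graphical case drops out immediately. By contrast, the computation of $e_r \cdot \mathbf n$ and the passage from Theorem~\ref{maintheorem} are routine.
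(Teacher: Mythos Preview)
Your proposal is correct and follows essentially the same route as the paper: both reduce to Theorem~\ref{maintheorem} by showing that $e_r\cdot\mathbf n$ (equivalently, the axial derivative $s'$ of the profile curve) changes sign, with the Kleene--M{\o}ller classification of embedded $SO(n)$-invariant shrinkers supplying the decisive input. The paper organizes the dichotomy as embedded versus non-embedded---in the embedded case Kleene--M{\o}ller forces the profile to be a closed curve, and in the non-embedded case the profile must contain a loop by a direct self-intersection argument---whereas you phrase the contrapositive (a graphical profile is embedded, hence one of the three standard models); these are equivalent, and your identified ``obstacle'' is exactly the step the paper resolves by citing \cite{mollerkleene}.
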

Theorem~\ref{maintheorem} and Corollary~\ref{maincor} only give results for the $F$-index. With a little more work, it will follow that the shrinkers we consider also have entropy index at least 3: 
\begin{cor}
\label{entropycor}
Suppose that $\Sigma$ is a shrinker which satisfies the hypotheses of either Theorem~\ref{maintheorem} or Corollary~\ref{maincor}. Then $\Sigma$ has entropy index at least 3.
\end{cor}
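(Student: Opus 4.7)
The plan is to derive this corollary from the $F$-index bounds already established in Theorem~\ref{maintheorem} and Corollary~\ref{maincor}, invoking the comparison between $F$-index and entropy index developed in Section~\ref{entropyproof}. In the spirit of Colding and Minicozzi, the two indices should agree generically, with a discrepancy arising only when $\Sigma$ splits off an isometric $\bbR$-factor. The main work is therefore to handle this splitting case by leveraging the rotational symmetry.

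If $\Sigma$ does not split as an isometric product $\Sigma' \times \bbR$, then the correspondence from Section~\ref{entropyproof}---which identifies $F$-unstable variations (modulo translations and dilations) with entropy-unstable variations---transfers the $F$-index lower bound directly to the entropy index, and the conclusion is immediate.

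If instead $\Sigma$ splits as $\Sigma^{n-1} \times \bbR$, then since $SO(2)$ is connected and acts by isometries preserving the extrinsically determined splitting direction, the $\bbR$-factor is fixed pointwise by $SO(2)$. Consequently $\Sigma^{n-1}$ sits in the orthogonal $\bbR^n$ and is itself $SO(2)$-invariant. In the setting of Corollary~\ref{maincor}, the full $SO(n)$ action then descends to the full rotation group of this orthogonal $\bbR^n$; the only rotationally-symmetric shrinkers there are concentric spheres or planes, which would make $\Sigma$ the excluded cylinder or plane. In the setting of Theorem~\ref{maintheorem}, both the unit normal and $e_r$ are unchanged by the product structure, so the sign-change condition on $e_r \cdot \mathbf{n}$ descends to $\Sigma^{n-1}$, and in particular $\Sigma^{n-1}$ again satisfies the hypotheses of the theorem in one lower dimension.

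To finish the splitting case under Theorem~\ref{maintheorem}, I would argue by dimension reduction, applying the corollary inductively in dimension $n-1$ to $\Sigma^{n-1}$ to obtain entropy index at least $3$ there. The product formula for the $F$-functional, together with the entropy identity $\lambda(\Sigma^{n-1} \times \bbR) = \lambda(\Sigma^{n-1})$, should then lift three independent entropy-unstable variations of $\Sigma^{n-1}$, multiplied by a suitable Gaussian profile in the $\bbR$-direction to lie in the relevant weighted function space, to three linearly independent entropy-unstable variations of $\Sigma$. The main obstacle will be the bookkeeping in this lifting: one must verify that the lifted variations remain linearly independent and are not absorbed by the translation and dilation corrections inherent to the entropy index definition, especially along the new $\bbR$-direction produced by the splitting.
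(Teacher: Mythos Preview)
Your overall architecture matches the paper's: handle the non-splitting case via the Colding--Minicozzi link between $F$-instability and entropy-instability, and in the splitting case pass to the factor. The paper does this in one step, writing $\Sigma=\tilde\Sigma\times\bbR^m$ with $\tilde\Sigma$ not splitting further (Lemma~\ref{splitofflinehypotheses}) and then lifting entropy-unstable variations back via $\lambda(\tilde\Sigma\times\bbR^m)=\lambda(\tilde\Sigma)$ (Lemma~\ref{transferentropyindex}); you do it inductively, peeling off one $\bbR$ at a time. Your treatment of the Corollary~\ref{maincor} case is in fact cleaner than the paper's: you observe that the splitting direction must be the fixed axis of the $SO(n)$, forcing $\Sigma$ to be the excluded cylinder.

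There is, however, a genuine gap in your Theorem~\ref{maintheorem} splitting argument. Your claim that the $\bbR$-factor is ``extrinsically determined'' and hence fixed pointwise by $SO(2)$ only works when the maximal Euclidean factor has dimension one. If $\Sigma=\tilde\Sigma\times\bbR^m$ with $m\geq 2$, there is no canonical single line to peel off, and $SO(2)$ may act nontrivially on the $m$-dimensional space of splitting directions; your connectedness argument does not apply. The paper closes this gap with a different argument (Case~2 of Lemma~\ref{splitofflinehypotheses}): if the rotation plane $W_1$ were not orthogonal to the splitting subspace $V_1$, one composes a rotation with a translation to produce a translation of $\Sigma$ by a nonzero vector in $W_1$, whence $\Sigma$ is translation-invariant in all of $W_1$ and $e_r\cdot\mathbf n\equiv 0$, contradicting the sign-change hypothesis. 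You should either adopt this argument or split off all of $\bbR^m$ at once.

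Finally, your lifting step is overcomplicated. In the paper's Definition~\ref{entropyindexdefn} the entropy index is phrased directly in terms of $\lambda(\Sigma_s)<\lambda(\Sigma)$, with no translation or dilation corrections (entropy is already translation- and dilation-invariant). The lift is simply $\Sigma_s=\tilde\Sigma_s\times\bbR^m$, and the identity $\lambda(\Sigma_s)=\lambda(\tilde\Sigma_s)$ transfers the inequality directly; no Gaussian profile is needed, and there is no bookkeeping with corrections to worry about.
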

Theorem~\ref{maintheorem} and Corollary~\ref{maincor} will be proved in section~\ref{proofsection}, Corollary~\ref{entropycor} will be proved in section~\ref{entropyproof}. Theorem~\ref{3dversion} is just the special case $n=2$ in Corollaries \ref{maincor}~and~\ref{entropycor}.

In terms of a ``generic mean curvature flow'' suggested in \cite{cm} (see also \cite{coldingminicozzipedersen}), this means that singularities corresponding to those shrinkers are expected to be easier to ``perturb away.'' That is, it should be possible to do so even if one adds up to two dimensions worth of restrictions on what perturbations are permissible. Note, however, that the results here only concern stability in a linearized sense; more work is needed to turn this into a statement about the dynamical stability of singularities of the nonlinear mean curvature flow.

There are many examples of shrinkers satisfying the hypotheses of Corollary~\ref{maincor}. The earliest rigorously constructed ones are the ``Angenent tori'', embedded surfaces diffeomorphic to $S^1 \times S^{n-1}$, found in \cite{angenenttori}. In that work, Angenent also gives numerical examples for several other immersed shrinkers with the same rotational symmetry. In \cite{drugankleene}, Drugan and Kleene construct infinitely many immersed shrinkers in~$\bbR^{n+1}$ with an $SO(n)$ rotational symmetry, and in \cite{drugan}, Drugan constructs a non-embedded rotationally symmetric shrinker homeomorphic to~$S^2$ (different from the numerical examples of Angenent). More recently, in \cite{mcgrath}, McGrath constructs embedded self-shrinkers in $\bbR^{2n}$ with an $SO(n) \times SO(n)$ symmetry. Corollary~\ref{maincor} and Corollary~\ref{entropycor} apply to all of the above examples, and show that they have entropy index at least 3.

In \cite{mollerkleene}, M\o ller and Kleene studied shrinkers satisfying the symmetry condition in Corollary \ref{maincor}, and classify the embedded ones. In fact, the proof of Corollary~\ref{maincor} will be based on this classification.

The main idea behind the proof of Theorem~\ref{maintheorem} is the following simple observation: Under the $SO(2)$ symmetry hypothesis, the stability operator~$L$ turns out to be a separable partial differential operator. We can find a complete set of eigenfunctions of the form $u e^{ik \theta}$, where $\theta$ is the angular variable, and $u$ is independent of~$\theta$. This way, we divide the spectrum of~$L$ into parts corresponding to different values of~$k$, and analyze these ``restricted spectra'' separately. All shrinkers have certain eigenfunctions in the $k=0$ and $k=1$ parts, and when these change sign, we can construct unstable variations in both of the corresponding subspaces.

When $\Sigma$ is compact and does not intersect the fixed-point set of the $SO(2)$ symmetry group, the above outline easily translates into a complete proof. When $\Sigma$ is non-compact, or when $\Sigma$ contains fixed points of the~$SO(2)$, there are more technical difficulties to overcome, a task which takes up the bulk of this paper.

The results in the paper can be compared \cite{urbano}, in which Urbano shows that the index of a minimal surface in $S^3$ is at least five, unless it is totally geodesic, with equality only for the Clifford torus.

\section{Self-shrinkers}
\subsection{Definition and notation}
\begin{defn} A \emph{shrinker} is a smooth, complete, immersed hypersurface~$\Sigma^n$ in~$\bbR^{n+1}$ whose mean curvature vector satisfies $\vec H + \frac 12 x^\perp = 0$, where $x$ is the position vector in~$\bbR^{n+1}$, and $\perp$ denotes orthogonal projection onto the normal bundle of~$\Sigma$. Furthermore, we require that $\Sigma$ have polynomial volume growth, i.e. that $\vol^n(\Sigma \cap B_R(0)) \leq p(R)$ for some polynomial~$p$ (which may depend on~$\Sigma)$.
\end{defn}
\begin{rmk}
In general, one can also consider shrinkers with higher codimension. However, less is known about the stability of these (see e.g.\ \cite{andrewsliwei} and \cite{leelue}), and we will assume that all of our shrinkers have codimension 1.
\end{rmk}
\begin{rmk}In this work, we only consider shrinkers with polynomial volume growth. In practice, this is not very restrictive. For example, Colding and Minicozzi show that any time slice of limit flow of a mean curvature flow starting from a smooth, closed, embedded hypersurface has polynomial (in fact Euclidean) volume growth; see Corollary~2.13 in~\cite{cm}. Furthermore, Ding and Xin (\cite{dingxin}) have shown that any properly immersed hypersurface satisfying the shrinker equation has Euclidean volume growth.
\end{rmk}
\begin{rmk}
All the results in this paper apply to both immersed and embedded shrinkers. However, in terms of notation, we will often pretend that $\Sigma$ is embedded. For example, we may write $\Sigma \cap B_R(0)$ in a situation where $i^{-1}(B_R(0))$ would be more correct, with $i: \Sigma \rightarrow \bbR^{n+1}$ being the immersion map. This will not alter the substance of the statements made; it will always be possible to translate them into the more cumbersome but technically correct notation.\end{rmk}

\begin{defn}If $x_0 \in \bbR^{n+1}$ and $t_0 \in \bbR$, we use $F_{x_0, t_0}$ to denote the \emph{$F$-functional} centered at $x_0$ with timescale $t_0$. It is defined by
\begin{equation*}
F_{x_0, t_0}(\Sigma) = (4\pi t_0)^{-n/2} \int_\Sigma e^{-\abs{x - x_0}^2/(4t_0)} d\vol^n\text.
\end{equation*}
Here, $\Sigma$ is any immersed hypersurface in~$\bbR^{n+1}$.
\end{defn}
\begin{rmk}
Self-shrinkers are precisely the stationary points of the functional~$F_{0,1}$. In fact, more is true: a suitably rescaled version of mean curvature flow is actually the negative gradient flow for the $F$-functional. For more details, see e.g. \cite{huisken}.
\end{rmk}
Colding and Minicozzi also define the \emph{entropy} of a hypersurface:
\begin{defn}
Suppose that $\Sigma$ is an immersed hypersurface in~$\bbR^{n+1}$. Then its \emph{entropy} is
\begin{equation*}
\lambda(\Sigma) = \sup_{x_0 \in \bbR^{n+1}, t_0 > 0} F_{x_0, t_0}(\Sigma)\text.
\end{equation*}
\end{defn}
As a consequence of Huisken's monotonicity formula (\cite{huisken}), the entropy is non-increasing under mean curvature flow. In particular, a mean curvature flow starting from a hypersurface of a certain entropy cannot develop singularities represented by shrinkers of higher entropy.

A shrinker can be seen as a minimal surface in the weighted manifold $(\bbR^{n+1}, e^{-\abs{x}^2/4} d\vol^{n+1})$. It will be convenient to view the shrinker itself as a weighted manifold with the same weight, and use special notation for weighted integrals:
\begin{notn}$ $
\begin{itemize}
\item We will use the following notation for weighted integrals:
\begin{equation*}
[f]_D = (4\pi)^{-n/2} \int_D f e^{-\abs x^2/4} d\vol^n\text,
\end{equation*}
where $D$ is a subset of $\Sigma$ (possibly $\Sigma$ itself) and $f$ is a function on $D$.

\item $L^2_{\mathrm w}(D)$ will denote the corresponding weighted $L^2$-space, i.e. the completion of $C^\infty_c(D)$ with respect to the norm given by $\norm{f}^2 = [f^2]_D$. 
\item If $D$ is open and bounded, we will also use $H^1_{0, \mathrm w}(D)$ to denote the completion of $C^\infty_c(D)$ with respect to the weighted $H^1$-norm given by $\norm{f}^2 = [\abs{\nabla f}^2 + f^2]_D$.

\item $\mathcal L$ will denote the corresponding drift Laplacian, defined by
\begin{equation*}
\mathcal L f = \Delta_\Sigma f - \frac 12 x^T \cdot \nabla f\text,
\end{equation*}
for $f$ a function on $\Sigma$. Here, $(\cdot)^T$ denotes orthogonal projection onto the tangent space of $\Sigma$, and $\cdot$ denotes the inner product in $\bbR^{n+1}$.
\end{itemize}
\end{notn}
The operator~$\mathcal L$ is formally self-adjoint with respect to a weighted inner product:
\begin{propn}
\label{intbp}
Suppose that $f, g \in C^\infty_c(\Sigma)$, where $\Sigma$ is a complete immersed hypersurface in $\bbR^{n+1}$. Then $[-f \mathcal Lg]_\Sigma = [\nabla f \cdot \nabla g]_\Sigma = [-g \mathcal L f]_\Sigma$.
\end{propn}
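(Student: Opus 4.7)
The plan is to rewrite the drift Laplacian in divergence form with respect to the Gaussian weight, and then appeal to the ordinary divergence theorem on $\Sigma$. Concretely, I would first observe that the intrinsic gradient on $\Sigma$ of the function $x \mapsto e^{-\abs{x}^2/4}$ equals $-\frac{1}{2}x^T e^{-\abs{x}^2/4}$, because $\nabla_\Sigma \abs{x}^2 = 2 x^T$ (the tangential part of the ambient gradient $2x$). From this, a direct computation gives
\begin{equation*}
\div_\Sigma\bigl(e^{-\abs{x}^2/4}\nabla f\bigr) = e^{-\abs{x}^2/4}\Delta_\Sigma f + \nabla_\Sigma\bigl(e^{-\abs{x}^2/4}\bigr)\cdot\nabla f = e^{-\abs{x}^2/4}\,\mathcal L f,
\end{equation*}
so $\mathcal L$ is precisely the weighted Laplacian associated with the measure $e^{-\abs{x}^2/4}d\vol^n$.

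Next I would multiply by $g$, integrate, and integrate by parts. Since $f, g \in C^\infty_c(\Sigma)$, the vector field $g\,e^{-\abs{x}^2/4}\nabla f$ is compactly supported on the (boundaryless) manifold $\Sigma$, so the divergence theorem applies with no boundary contribution, giving
\begin{equation*}
\int_\Sigma g\,\mathcal L f \cdot e^{-\abs{x}^2/4}\,d\vol^n = \int_\Sigma g\,\div_\Sigma\bigl(e^{-\abs{x}^2/4}\nabla f\bigr)\,d\vol^n = -\int_\Sigma e^{-\abs{x}^2/4}\,\nabla f \cdot \nabla g\,d\vol^n.
\end{equation*}
Dividing by $(4\pi)^{n/2}$ yields $[-g\,\mathcal L f]_\Sigma = [\nabla f \cdot \nabla g]_\Sigma$. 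The other equality $[\nabla f \cdot \nabla g]_\Sigma = [-f\,\mathcal L g]_\Sigma$ follows either by swapping the roles of $f$ and $g$ in the same computation, or simply from the manifest symmetry of the middle term in $f$ and $g$.

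There is no real obstacle here; the only subtle point, which is more notational than substantive, is that $\Sigma$ is only immersed, so one should interpret the integration as taking place on the abstract domain of the immersion and carry out the divergence theorem there. Completeness of $\Sigma$ plays no role at this stage because the compact support of $f$ and $g$ prevents any escape of mass to infinity; it is only in later extensions of this identity to a wider function class that completeness becomes relevant.
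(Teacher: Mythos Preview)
Your argument is correct and follows exactly the approach indicated in the paper: compute $\nabla e^{-\abs{x}^2/4} = -\tfrac{1}{2}x^T e^{-\abs{x}^2/4}$, recognize $e^{-\abs{x}^2/4}\mathcal L f$ as $\div_\Sigma(e^{-\abs{x}^2/4}\nabla f)$, and apply the divergence theorem using compact support. The paper states this in a single sentence; you have simply written out the details.
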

\begin{proof}
This is a simple consequence of the divergence theorem, using the fact that $\nabla e^{-\abs{x}^2/4} = -\frac 12 x^T e^{-\abs{x}^2/4}$.
\end{proof}
\subsection{Second variation and $F$-stability}
In studying the stability of shrinkers, Colding and Minicozzi introduced the notion of $F$-stability. We will repeat the essentials of this definition below, for easy reference. We will also define the $F$-index, which, roughly speaking, is the dimension of the space of unstable variations, modulo translations and dilatations

Since the rescaled mean curvature is a negative gradient flow for the $F$\-functional, stability of shrinkers should be related to the second variation of this functional. We will begin by stating the second variation formula.
\begin{defn}
Suppose that $i_s: \Sigma \rightarrow \bbR^{n+1}$ is a family of immersions of a hypersurface. We say that $i_s$ is a \emph{variation} of the immersed
hypersurface~$i_0(\Sigma)$. The \emph{variation vector field}~$X$ is given by $X = \partial i_s/\partial s \rvert_{s=0}$. If we can write $X = f\mathbf n$, where $f$ is a scalar function on $\Sigma$ and $\mathbf n$ is the unit normal to~$i_0(\Sigma)$, then we call $f$ the corresponding \emph{variation function}.
\end{defn}
\begin{rmk}We will often use the notation $\Sigma_s = i_s(\Sigma)$ for the variation, and ignore the questions of how the hypersurface~$\Sigma_s$ is parametrized. It will then be understood that we make the parametrization in such a way that the variation vector field is normal to~$\Sigma_0$, something which can always be achieved. The properties we study will be independent of parametrization, so this represents no loss of generality. 
\end{rmk}
\begin{propn}
\label{secondvariationformula}
Let $\Sigma$ be an immersed shrinker of polynomial volume growth. Suppose that $\Sigma_s$ is a compactly supported variation of~$\Sigma$ with variation function~$f$, and that $x_s \in \bbR^{n+1}$ and $t_s \in \bbR^+$ are smooth variations around $x_0 = 0$ and $t_0 = 1$,  where $\tfrac{d}{ds} x_s\rvert_{s=0} = y$, $\tfrac{d}{ds} t_s \rvert_{s=0} = h$. Then
\begin{multline}
\left.\frac{d^2}{ds^2}\right\rvert_{s=0}F_{x_s, t_s}(\Sigma_s)
= \left[-f Lf
 + 2fhH - h^2H^2 + fy \cdot \mathbf n - \frac 12 (y\cdot \mathbf n)^2\right]_\Sigma\text,
\end{multline}
Here, the \emph{stability operator}~$L$, acting on functions on~$\Sigma$, is given by 
\begin{align*}
Lf &= \mathcal L f + \abs{A}^2 f + \frac 12 f \\
   &= \Delta f - \frac 12x^T \cdot \nabla f + \abs{A}^2f +  \frac 12 f\text;
\end{align*}
$\mathbf n$ denotes the normal vector to~$\Sigma$; and $\cdot$ denotes the inner product in~$\bbR^{n+1}$.
\end{propn}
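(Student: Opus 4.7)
The plan is to introduce three independent parameters $\epsilon_1, \epsilon_2, \epsilon_3$ corresponding to the variations of $\Sigma$, $x_0$, and $t_0$, and compute the Hessian of $G(\epsilon_1, \epsilon_2, \epsilon_3) = F_{\epsilon_2 y,\,1 + \epsilon_3 h}(\Sigma_{\epsilon_1})$ at the origin; the desired second derivative is the sum of all six entries of this symmetric Hessian (three diagonal, three mixed), since our one-parameter variation corresponds to the diagonal direction. A preliminary step is to verify that all three first derivatives vanish at a shrinker: the $\partial_1$ derivative vanishes because the shrinker equation characterizes stationary points of $F_{0,1}$, while the $\partial_2$ and $\partial_3$ derivatives vanish because $F_{x_0, t_0}$ at the shrinker coincides with $F_{0,1}$ under the natural translation-dilation reparametrization (a consequence of Huisken's monotonicity setup).

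For $\partial_1^2$ (pure hypersurface variation): rewrite $F_{0,1}(\Sigma_s)$ as an integral over the fixed parameter domain $\Sigma$ with Jacobian factor. Differentiate twice and collect: the second variation of the area element produces a term proportional to $\abs{A}^2 f^2$ together with a $\abs{\nabla f}^2$ piece from the rotating tangent frame, while the second variation of the Gaussian factor $e^{-\abs{i_s}^2/4}$ yields further quadratic terms in $f$ that, after using the shrinker identity $H + \tfrac12 x\cdot \mathbf n = 0$, contribute the $\tfrac12 f^2$ piece of $L$. Using Proposition~\ref{intbp} to convert $[\abs{\nabla f}^2]_\Sigma$ into $-[f\mathcal L f]_\Sigma$, the total packages cleanly into $[-fLf]_\Sigma$.

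For $\partial_2^2$ and $\partial_3^2$ (pure ambient variations on fixed $\Sigma$): Taylor-expand $(4\pi t_0)^{-n/2} e^{-\abs{x - x_0}^2/(4t_0)}$ to second order in $(x_0, t_0)$ around $(0, 1)$. After integrating away tangential terms using the Gaussian divergence theorem and applying the shrinker equation $x\cdot \mathbf n = -2H$, the quadratic-in-$y$ part reduces to $-\tfrac12 [(y\cdot \mathbf n)^2]_\Sigma$ and the quadratic-in-$h$ part to $-[h^2 H^2]_\Sigma$. The off-diagonal terms $\partial_1 \partial_2$ and $\partial_1 \partial_3$ are simpler: each combines the first variation of the area element (which produces a factor of $-Hf$) with the linear-in-$(x_0, t_0)$ derivative of the Gaussian factor, and after another application of the shrinker identity they yield $[fy\cdot \mathbf n]_\Sigma$ and $[2fhH]_\Sigma$ respectively. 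Finally, $\partial_2 \partial_3$ vanishes at the shrinker by the same identity together with integration by parts against the Gaussian weight.

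The main obstacle is the $\partial_1^2$ computation: keeping track of the second variation of the area element (which produces both an $\abs{A}^2 f^2$ term and gradient terms from the tangent frame rotation as $\Sigma_s$ moves), carefully expanding $-\abs{i_s}^2/4$ to second order in $s$, and then performing the right integration by parts with the Gaussian weight to repackage the gradient terms into the operator $L = \mathcal L + \abs{A}^2 + \tfrac12$. Sign conventions for $H$ versus $\vec H$ and for $\mathbf n$ will also require careful bookkeeping throughout, since all of them appear and must be consistent with the convention used in the shrinker equation.
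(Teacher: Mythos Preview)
The paper does not actually prove this proposition: its entire proof is the single line ``See Theorem~4.14 in \cite{cm}.'' Your proposal, by contrast, sketches the computation itself. The outline you give---introducing three independent parameters for the hypersurface, center, and scale, computing the Hessian of $G$ at the origin, and reading off the diagonal second derivative---is essentially the strategy Colding and Minicozzi use in \cite{cm}, so in that sense you are reproducing (in sketch form) the argument the paper is citing rather than offering a genuinely different route.

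One bookkeeping point to watch: you say the desired second derivative is ``the sum of all six entries'' of the Hessian, but the chain rule gives
\[
\left.\frac{d^2}{ds^2}\right|_{s=0} G(s,s,s) = \sum_{i,j=1}^3 \partial_i\partial_j G(0,0,0),
\]
so each off-diagonal entry is counted twice. Your later identifications ($\partial_1\partial_2 \leadsto [f\,y\cdot\mathbf n]_\Sigma$ and $\partial_1\partial_3 \leadsto [2fhH]_\Sigma$) are then off by a factor of two in at least one place if taken literally. This is exactly the sort of sign-and-factor bookkeeping you flag at the end, so it is not a conceptual gap, but be aware that the coefficients in the final formula come out correctly only once the doubling is handled consistently.
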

\begin{proof}
See Theorem~4.14 in \cite{cm}.
\end{proof}
It turns out that the stability operator~$L$ always has certain negative eigenvalues (here and in the rest of the article, we will use the convention that $\lambda$ is an eigenvalue for $L$ if $Lu + \lambda u = 0$ for some function $u$).
\begin{propn}
\label{knowneigenfunctions}
Let $\Sigma$ be an oriented shrinker in~$\bbR^{n+1}$, let $H$ denote its mean curvature, and let $\mathbf n$ denote its (consistently oriented) unit normal. Then
\begin{enumerate}[(i)]
\item $LH = H$, and
\item $L(y \cdot \mathbf n) = \frac 12 y \cdot \mathbf n$, for any fixed vector $y \in \bbR^{n+1}$.
\end{enumerate}
\end{propn}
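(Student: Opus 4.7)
The plan is to verify both identities by direct pointwise computation, using the standard structure equations of a hypersurface in Euclidean space (Weingarten, Codazzi, Gauss) together with the shrinker equation $\vec H + \tfrac12 x^\perp = 0$.

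For part (ii), I would set $\phi_y := y \cdot \mathbf n$. Since $y$ is a fixed ambient vector, the Weingarten formula $D_j \mathbf n = -h_{jk} e_k$ gives $\nabla_j \phi_y = -h_{jk}(y^T)_k$, so $\nabla \phi_y$ is simply $-A$ applied to $y^T$. Differentiating once more in a local orthonormal frame and using Codazzi ($\sum_j \nabla_j h_{jk} = \nabla_k H$) together with the Gauss formula to compute $\nabla_j (y^T)_k$, I get
\[
  \Delta \phi_y = -\nabla H \cdot y^T - |A|^2 \phi_y.
\]
The crucial step is now to invoke the shrinker equation, which expresses $H$ as a constant multiple of $x \cdot \mathbf n$. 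Differentiating that relation shows $\nabla H$ is the corresponding multiple of $A(x^T, \cdot)$, so $-\nabla H \cdot y^T$ is proportional to $A(x^T, y^T)$. This exactly cancels with the drift contribution $-\tfrac12 x^T \cdot \nabla \phi_y$, which by Weingarten is also a multiple of $A(x^T, y^T)$. What remains is $\mathcal L \phi_y = -|A|^2 \phi_y$, which rearranges to $L \phi_y = \tfrac12 \phi_y$.

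For part (i), the shrinker equation again identifies $H$ as a constant multiple of $x \cdot \mathbf n$, so it suffices to compute $L(x \cdot \mathbf n)$ by repeating the steps of part (ii) with the position vector $x$ in place of the constant vector $y$. The essential difference is that $D_j x = e_j$ is no longer zero, so the Gauss formula now contributes extra terms when differentiating $(x^T)_k$: a Kronecker delta $\delta_{jk}$ whose contraction with $h_{jk}$ produces $H$, together with a cross term proportional to $(x \cdot \mathbf n)|A|^2$. Tracking these additional contributions and applying the shrinker equation one more time to trade $x \cdot \mathbf n$ for $H$, one obtains $\mathcal L H = \tfrac12 H - |A|^2 H$, which is exactly $L H = H$.

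There is no substantive analytic difficulty to overcome here: both claims are pointwise identities among smooth functions on $\Sigma$. The only real care needed is sign bookkeeping --- the precise signs in the Weingarten and shrinker relations depend on the orientation convention for $\mathbf n$ and on whether one writes $\vec H = H \mathbf n$ or $\vec H = -H \mathbf n$ --- and one must keep a single convention throughout for the cancellations described above to line up correctly.
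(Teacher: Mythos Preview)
Your proposal is correct and is essentially the standard direct computation. The paper itself does not carry out this argument but simply refers the reader to Theorem~5.2 of Colding--Minicozzi~\cite{cm}, where exactly this frame computation (Weingarten for $\nabla(y\cdot\mathbf n)$, Codazzi for $\Delta(y\cdot\mathbf n)$, and the shrinker identity $2H = x\cdot\mathbf n$ to close the loop) is performed; so what you have written is, in effect, a faithful reconstruction of the cited proof rather than a different route.
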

\begin{proof}
See Theorem~5.2 in \cite{cm}.
\end{proof}
These variations correspond to translating and scaling the shrinker. Doing so does lower the value of the $F$-functional. However, as mentioned in the introduction, these variations correspond to just moving a singularity around, so we do not want to consider them true instabilities. A better stability definition, Colding and Minicozzi's $F$-stability from \cite{cm}, ``quotients out'' by these variations.
\begin{defn}
\label{unstabledefn}
Let $\Sigma^n \subset \bbR^{n+1}$ be an immersed shrinker of polynomial volume growth.
\begin{enumerate}[(i)]
\item 
Let $\Sigma_s$ be a compactly supported variation of $\Sigma = \Sigma_0$ with variation function~$f$. We say that the variation is \emph{stable} if there exists some variations $x_s \in \bbR^{n+1}, t_s \in \bbR$ with $x_s = 0$, $t_s = 1$ such that
\begin{equation*}
\left.\frac{d^2}{ds^2}\right\rvert_{s=0} F_{x_s, t_s}(\Sigma_s) \geq 0\text;
\end{equation*}
otherwise we say it is \emph{unstable}.

From Proposition~\ref{secondvariationformula} we see that this property only depends on the variation function~$f$, so we will also say that $f$ is stable or unstable.
\item $\Sigma$ is \emph{$F$-stable} if every compactly supported variation is stable. Otherwise it is \emph{$F$-unstable}.
\item The \emph{$F$-index} of~$\Sigma$ is the maximal dimension of a linear subspace consisting entirely of compactly supported unstable variation functions.
\end{enumerate}
\end{defn}
\begin{rmk}With these definitions, being $F$-stable is equivalent to having $F$-index~0.
\end{rmk}
\section{Rotationally symmetric shrinkers}
\subsection{Definition and notation}
Fix a one-parameter subgroup $G \simeq SO(2)$ of $SO(n+1)$, the group of rotations of~$\bbR^{n+1}$. 
\begin{defn}Let $S$ be any subset of~$\bbR^{n+1}$. We say that $S$ is \emph{rotationally symmetric} if it is invariant under~$G$.

If $\Sigma$ is an immersed hypersurface in~$\bbR^{n+1}$, we say that $\Sigma$ is \emph{rotationally symmetric} if its image is rotationally symmetric as a subset of $\bbR^{n+1}$.
\end{defn}
\begin{notn}$ $
\begin{itemize}
\item $Q$ will denote the quotient~$\bbR^{n+1}/G$. We will identify $Q$ with a half-space~$\bbR^{n-1} \times \bbR_{\geq 0}$.
\item $A$ will denote the fixed-point set of $\bbR^{n+1}$ under the action of~$G$, i.e. $A = \{x \in \bbR^{n+1} \mid gx = x \;\forall g \in G\}$. We will sometimes call this the \emph{symmetry axis}.
\item $(x_1, \ldots, x_{n+1})$ will be used as Cartesian coordinates on~$\bbR^{n+1}$. These are arranged so that $G$ acts by changing only $x_n$ and $x_{n+1}$. $x$ will denote the position vector on~$\bbR^{n+1}$.
\item $(x_1, \ldots, x_{n-1}; r, \theta)$ will be used as cylindrical coordinates on $\bbR^{n+1}$, defined by $x_n = r\cos \theta$ and $x_{n+1} = r \sin \theta$. Thus, for example, $A = \{r = 0\}$.
\item $(x_1', \ldots, x_n')$ will be used as coordinates on $Q = \bbR^{n-1} \times \bbR_{\geq 0}$. Sometimes we will write $r = x_n'$, to emphasize the connections with the cylindrical coordinates on $\bbR^{n+1}$. $x'$ will denote the position vector in~$Q$.
\item $\Sigma'$ will denote the image of $\Sigma$ in $Q$, assuming that $\Sigma$ is rotationally symmetric. Similarly, if $S$ is any rotationally symmetric subset of $\bbR^{n+1}$, then $S'$ will denote its image in~$Q$.
\end{itemize}
\end{notn}
In studying a rotationally symmetric shrinker $\Sigma$, 
it will be helpful to pass to the quotient~$\Sigma'$. We will establish some basic properties of this quotient.

An immersed submanifold will be called \emph{connected} if the domain of the immersion is connected, i.e.\ if it cannot be decomposed non-trivially as two immersed submanifolds.
\begin{lemma}
\label{sigmaprimeprops}
Suppose that $\Sigma$ is a smooth, complete, connected and rotationally symmetric immersed hypersurface in~$\bbR^{n+1}$. 
Then $\Sigma'$ is a smooth immersed hypersurface-with-boundary in~$Q$, whose boundary lies on~$\partial Q$. Furthermore, the interior of~$\Sigma'$ is connected.
\end{lemma}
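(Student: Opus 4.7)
\emph{Proof proposal.}
I would analyze $\Sigma$ in two regimes: at points off the symmetry axis~$A$ and at points of $\Sigma \cap A$. Away from~$A$ the quotient map $\pi\colon \bbR^{n+1}\setminus A \to Q\setminus\partial Q$ is a smooth principal $G$-bundle; $G$ acts freely on $\Sigma$ there, and any local transversal to the $G$-orbits within $\Sigma$ projects diffeomorphically onto a piece of $\Sigma'$, exhibiting $\Sigma'\setminus\partial Q$ as a smooth immersed hypersurface of $Q\setminus\partial Q$. The substantive part of the proof is then the local analysis near $\Sigma\cap A$.

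At $p\in\Sigma\cap A$, the tangent space $T_p\Sigma\subset T_p\bbR^{n+1}=V_0\oplus V_1$---with $V_0=T_pA$ trivially acted on by $G$ and $V_1$ the two-dimensional rotating plane---is a $G$-invariant hyperplane, because the image of $\Sigma$ is $G$-invariant. A short linear-algebra check shows that the only $G$-invariant hyperplanes have the form $W\oplus V_1$ for some codimension-one $W\subset V_0$, so in particular $V_1\subset T_p\Sigma$. Choosing Cartesian coordinates adapted to this splitting ($y$ in~$W$, $z$ in $W^\perp\cap V_0$, and $(u,v)$ in~$V_1$), we can write $\Sigma$ locally as a graph $z=\phi(y,u,v)$ with $\phi$ rotation-invariant in $(u,v)$. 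The classical fact that a smooth rotation-invariant function of $(u,v)$ is a smooth function of $u^2+v^2$ (with parameters) then gives $\phi(y,u,v)=\psi(y,u^2+v^2)$ for some smooth~$\psi$. In the quotient coordinates $(y,z,r)$ on~$Q$, this makes $\Sigma'$ locally the graph $z=\psi(y,r^2)$ over $\{r\geq 0\}$, which is smooth up to and including $r=0$. This is exactly the structure of a smooth hypersurface-with-boundary near~$\pi(p)$, with boundary at $r=0$ and hence on~$\partial Q$.

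For connectedness of the interior, the same local chart shows that $\Sigma\cap A$ is cut out by $(u,v)=0$, so it is a smooth submanifold of codimension~$2$ in the connected $n$-manifold~$\Sigma$ (here $n\geq 2$). Removing a codimension-$2$ submanifold from a connected manifold preserves connectedness, so $\Sigma\setminus(\Sigma\cap A)$ is connected; passing to the quotient by the connected group~$G$ then preserves connectedness as well, yielding connectedness of the interior of~$\Sigma'$.

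The main obstacle is assembling these local pictures into a global smooth hypersurface-with-boundary structure when $\Sigma$ is merely immersed rather than embedded. To do this one needs to lift the $G$-action to the domain of the immersion near each orbit, using local injectivity of the immersion and path-connectedness of~$G$ to transport points along orbits consistently, and then check that the resulting manifold-with-boundary charts glue. The ``smooth function of $r^2$'' lemma used above is a standard consequence of Whitney's theorem on even functions, but is worth verifying carefully in the parametric form we need.
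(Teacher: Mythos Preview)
Your proof is correct but takes a different route from the paper's. The paper identifies $\Sigma'$ with the intersection $i(\Sigma)\cap H_+$ for a fixed half-hyperplane $H_+$ bounded by the axis~$A$, and reduces everything to transversality. Transversality of $i$ with $H$ (and then with~$A$) comes from a one-line Sard argument: if $i(\Sigma)$ were tangent to such a hyperplane at a point, rotating that tangency through~$G$ would force the image of~$i$ to contain an open subset of~$\bbR^{n+1}$, impossible for an immersion of an $n$-manifold. This delivers the manifold-with-boundary structure and the codimension-$2$ property of $i^{-1}(A)$ simultaneously, without Whitney's even-function lemma or the classification of $G$-invariant hyperplanes. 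Your approach, by contrast, produces explicit graphical boundary charts $z=\psi(y,r^2)$, which makes smoothness up to $\partial Q$ completely transparent and is arguably more informative; the cost is the extra machinery (Whitney/Glaeser) and the lifting of the $G$-action to the domain of the immersion that you rightly flag as the main technical point. For connectedness of the interior the two arguments coincide: remove the codimension-$2$ set $i^{-1}(A)$ and pass to the quotient.
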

\begin{proof}
Let $i: \Sigma \rightarrow \bbR^{n+1}$ denote the immersion and let $H$ be some hyperplane in~$\bbR^{n+1}$ containing $A$. Note that $i(\Sigma)$ cannot be tangent to $H$ anywhere; if it were, then, by rotational symmetry, the image of~$i$ would contain an open subset of~$\bbR^{n+1}$ b, contradicting Sard's theorem. It follows that $i$ intersects $H$ transversely, and thus $i(\Sigma) \cap H$ is a smooth, complete, immersed hypersurface in~$H$.

Since $\Sigma'$ can be viewed as the intersection of $i(\Sigma)$ with one half of~$H$, it follows that $\Sigma'$ is (the image of) a smooth, immersed hypersurface in~$Q$, with boundary lying in~$\partial Q$.

It remains to show that the interior of~$\Sigma'$ is connected. First, we will show that $i$ also intersects $A$ transversely. Suppose not. Then there must be a point $x \in i^{-1}(A)$ where $i_\ast(T_x \Sigma)$ is hyperplane $H$ containing $A$. This leads to a contradiction by the same reasoning as in the first paragraph. Thus, $i^{-1}(A)$ is a submanifold of codimension two in $\Sigma$.

Since $\Sigma$ is connected, $\Sigma \setminus i^{-1}(A)$ is connected too, so any two points $p, q \in \Sigma \setminus i^{-1}(A)$ can be joined by a path which avoids $i^{-1}(A)$. Since this path can be pushed down to~$\Sigma'$, this shows that $\Sigma' \setminus \partial \Sigma'$ is connected as well.
\end{proof}
Just as $\Sigma$ is a minimal surface in Euclidean space weighted by $e^{-\abs{x}^2/4}$, $\Sigma'$ is a minimal surface in~$Q$ with the weight $2\pi r e^{-\abs{x'}^2/4}$, and it will be helpful to define notation for the corresponding weighted integrals.
\begin{notn} Suppose that $\Sigma$ is a rotationally symmetric shrinker.
\begin{itemize}
\item We will use the following notation for weighted integrals:
\begin{equation*}
[u]'_D = (4\pi)^{-n/2} \int_{D'} 2\pi u r e^{-\abs{x'}^2/4} d\vol^n\text,
\end{equation*}
where $D'$ is a subset of~$\Sigma'$ and $u$ is a function on~$D'$.
\item $\mathcal L'$ will denote the corresponding drift Laplacian, defined by
\begin{equation*}
\mathcal L' u = \Delta_{\Sigma'} u + \left(\frac 1r e_r^T + \frac 12 (x')^T\right) \cdot \nabla u\text,
\end{equation*}
where $u$ is a function on~$\Sigma'$. Here $(\cdot)^T$ denotes the orthogonal projection onto
the tangent space of~$\Sigma'$, and $\cdot$ denotes the Euclidean inner product in $Q$.
\end{itemize}
\end{notn}
As with~$\mathcal L$, this operator is formally self-adjoint:
\begin{propn}
\label{intbpprime}
Let $\Sigma'$ be an immersed hypersurface in $Q$ with $\partial \Sigma' \subset \partial Q$. Suppose that $f, g \in C^\infty_c(\Sigma')$ (where we allow the possibility that $f$ and $g$ are non-zero on~$\partial \Sigma'$). Then $[-f\mathcal L' g]'_{\Sigma'} = [\nabla f\cdot\nabla g]'_{D'} = [-g \mathcal L' f]'_{D'}$.
\end{propn}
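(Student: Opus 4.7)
The plan is to recognize $\mathcal{L}'$ as the drift Laplacian associated with the weight $w' := 2\pi r \, e^{-\abs{x'}^2/4}$: a direct computation (analogous to that in the proof of Proposition~\ref{intbp}) shows the divergence-form identity
\begin{equation*}
w' \, \mathcal{L}' u \;=\; \div_{\Sigma'}(w' \nabla u).
\end{equation*}
Given this, the proposition reduces to the divergence theorem on $\Sigma'$, viewed as a smooth manifold-with-boundary by Lemma~\ref{sigmaprimeprops}, together with the observation that the weight $w'$ vanishes along $\partial \Sigma'$.

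Concretely, multiplying by $-f$ and integrating over $\Sigma'$ would yield
\begin{align*}
[-f \mathcal{L}' g]'_{\Sigma'} &= -(4\pi)^{-n/2} \int_{\Sigma'} f \, \div_{\Sigma'}(w' \nabla g) \, d\vol^n \\
&= [\nabla f \cdot \nabla g]'_{\Sigma'} \;-\; (4\pi)^{-n/2} \int_{\partial \Sigma'} f \, w' \, (\nabla g \cdot \nu) \, d\vol^{n-1},
\end{align*}
where $\nu$ denotes the outward conormal to $\partial \Sigma'$ inside $\Sigma'$. The second asserted equality $[\nabla f \cdot \nabla g]'_{\Sigma'} = [-g \mathcal{L}' f]'_{\Sigma'}$ then follows by the same computation with the roles of $f$ and $g$ exchanged.

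The only thing left to check is that the boundary integral vanishes. By Lemma~\ref{sigmaprimeprops}, $\partial \Sigma' \subset \partial Q = \{r=0\}$, so the $r$-factor in $w' = 2\pi r \, e^{-\abs{x'}^2/4}$ forces $w' \equiv 0$ on $\partial \Sigma'$. Since $f, g \in C^\infty_c(\Sigma')$ are smooth up to the boundary, the remaining factors $f$ and $\nabla g \cdot \nu$ are bounded there, so the boundary term is zero. I do not expect any real obstacle here: the essential content of the proposition is that the explicit $r$-factor in the weight cancels the degeneration of cylindrical coordinates at the symmetry axis, and this is precisely what allows integration by parts to hold without imposing any boundary condition on $f$ or $g$ along $\partial \Sigma'$.
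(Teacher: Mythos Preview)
Your proof is correct and follows essentially the same approach as the paper: both arguments express $\mathcal{L}'$ in divergence form with respect to the weight $w' = 2\pi r\,e^{-\abs{x'}^2/4}$, apply the divergence theorem on the manifold-with-boundary $\Sigma'$, and observe that the boundary integral vanishes because the factor of $r$ in $w'$ forces $w'\equiv 0$ on $\partial\Sigma' \subset \{r=0\}$. Your presentation is in fact slightly cleaner in writing the key identity as $w'\,\mathcal{L}'u = \div_{\Sigma'}(w'\nabla u)$, which is the standard drift-Laplacian form.
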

\begin{proof}
The key here is to note that $\nabla(r e^{-\abs{x'}^2/4}) = (e_r^T/r + (x')^T/2) r e^{-\abs{x'}^2/4}$. Therefore,
\begin{equation*}
(\mathcal L' g) r e^{-\abs{x'}^2/4} = \Delta(g r e^{-\abs{x'}^2/4})\text,
\end{equation*}
and by the divergence theorem, we have
\begin{equation*}
[f \mathcal L' g]'_{\Sigma'} + [\nabla f \cdot \nabla g]'_{\Sigma'} = \int_{\partial \Sigma'} f \frac{\partial g}{\partial n} 2\pi r e^{-\abs{x'}^2/4} d \vol^{n-1}\text.
\end{equation*}
The right-hand side vanishes because $r=0$ on $\partial \Sigma'$, and it follows that $[- f\mathcal L' g]'_{\Sigma'} = [\nabla f \cdot \nabla g]'_{\Sigma'}$. By symmetry, we also have $[-g\mathcal L' f]'_{\Sigma'} = [\nabla f \cdot \nabla g]'_{\Sigma'}$.
\end{proof}
\subsection{Fourier modes}
Suppose that $\Sigma$ is a rotationally symmetric immersed hypersurface in~$\bbR^{n+1}$,  that $D$ is a rotationally symmetric open subset of~$\Sigma$ (possibly $\Sigma$ itself), and that $k \geq 0$ is an integer. We will be paying special attention to functions~$f: D \rightarrow \bbR$ of the forms
\begin{align}
\label{cosform}
f(x) &= u(x')\cos(k\theta)\text{, and} \\
\label{sinform}
f(x) &= u(x')\sin(k\theta)\text, 
\end{align}
where $u$ is a function on~$\Sigma'$ only. We will introduce some spaces of functions of this form:
\begin{notn}$ $
\begin{itemize}
\item $C^\infty (k, D)$ consists of smooth functions on~$D$ of the form~\eqref{cosform}.
\item Similarly, $C^\infty_c(k, D)$ and $C^\infty_0(k, D)$ consist of smooth functions of the form~\eqref{cosform} which are compactly supported or which vanish on the boundary of~$D$, respectively.
\item $L^2_{\mathrm w}k, D)$ will denote the completion of~$C^\infty_c(k, D)$ with respect to the weighted $L^2$-norm given by $\norm{f}^2 = [f^2]_D$. When $D$ is open and bounded, we will also use $H^1_{0,\mathrm w}(k,D)$ to denote the completion of~$C^\infty_c(D)$ with respect to the weighted $H^1$-norm given by $\norm{f}^2 = [\abs{\nabla f}^2 + [f^2]_D$. 
\item We define $C^\infty_{\mathrm{sin}}(k, D)$, $C^\infty_{c, \mathrm{sin}}(k, D)$, $C^\infty_{0, \mathrm{sin}}(k, D)$, $L^2_{\mathrm w, \mathrm{sin}}(k, D)$, and $H^1_{0, \mathrm w, \mathrm{sin}}(k, D)$ in the same way, but for functions of the form~\eqref{sinform} instead.
\end{itemize}
\end{notn}
\begin{rmk}
Because of the rotational symmetry, the spaces defined using the sines behave in the same way as the ones using cosines. We will focus on the latter, with the tacit understanding that corresponding results hold for the former as well.
\end{rmk}
Given a function~$f$ on~$\Sigma$, we can decompose it as a Fourier series:
\begin{propn}
\label{fouriersum}
Let $\Sigma$ be a rotationally symmetric immersed hypersurface in~$\bbR^{n+1}$, and let $D$ be a rotationally symmetric subset of~$\Sigma$. Then we can decompose $L^2_{\mathrm w}(D)$ as an orthogonal direct sum of Hilbert spaces
\begin{equation}
\label{l2decomp}
L^2_{\mathrm w}(D) = \bigoplus_{k=0}^\infty L^2_{\mathrm w}(k, D) \oplus \bigoplus_{k=1}^\infty L^2_{\mathrm w, \mathrm{sin}}(k, D)\text.
\end{equation}
\end{propn}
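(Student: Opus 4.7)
The plan is to follow the standard Fourier expansion in the angular variable~$\theta$, being careful about the rotation axis~$A$ where $\theta$ is undefined. The argument has three pieces: an area/orthogonality calculation, a measure-zero observation about $i^{-1}(A)$, and a density argument carried out via cutoffs.

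First I would record the area decomposition. Away from $i^{-1}(A)$ the hypersurface~$\Sigma$ is locally a circle bundle over $\Sigma'\setminus\partial\Sigma'$ with fiber of circumference~$2\pi r$, so the induced area element splits as $d\vol^n_\Sigma = r\,d\theta\wedge d\vol^{n-1}_{\Sigma'}$, and since $\abs{x}^2 = \abs{x'}^2$ is $\theta$-independent one has
\[
[fg]_D = (4\pi)^{-n/2}\int_{D'}\left(\int_0^{2\pi}f(x',\theta)g(x',\theta)\,d\theta\right) r\,e^{-\abs{x'}^2/4}\,d\vol^{n-1}_{\Sigma'}.
\]
By Lemma~\ref{sigmaprimeprops}, $i^{-1}(A)$ has codimension two and hence measure zero in~$\Sigma$, so this formula suffices to compute all weighted $L^2$ inner products. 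Orthogonality of $L^2_{\mathrm w}(k,D)$ with $L^2_{\mathrm w}(l,D)$ for $k\neq l$, and of $L^2_{\mathrm w}(k,D)$ with $L^2_{\mathrm w,\mathrm{sin}}(l,D)$ for any~$k,l$, then follows from the standard orthogonality of trigonometric polynomials on $[0,2\pi]$ applied to the inner integral. Since the norms on $C^\infty_c(k,D)$ and $C^\infty_{c,\mathrm{sin}}(k,D)$ are the restrictions of the $L^2_{\mathrm w}(D)$-norm, their completions embed isometrically as closed, pairwise orthogonal subspaces of $L^2_{\mathrm w}(D)$.

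For density, I would take an arbitrary $f\in C^\infty_c(D)$ and multiply by a smooth cutoff~$\chi_\epsilon$ vanishing in an $\epsilon$-tube around~$i^{-1}(A)$ and equal to one outside a $2\epsilon$-tube. Since $i^{-1}(A)$ has measure zero, $\chi_\epsilon f\to f$ in~$L^2_{\mathrm w}(D)$ as $\epsilon\to 0$. For $\tilde f = \chi_\epsilon f$, the classical Fourier coefficients
\[
u_k(x') = \frac{1}{\pi}\int_0^{2\pi}\tilde f(x',\theta)\cos(k\theta)\,d\theta,\qquad v_k(x') = \frac{1}{\pi}\int_0^{2\pi}\tilde f(x',\theta)\sin(k\theta)\,d\theta
\]
(with the usual $1/(2\pi)$ for~$u_0$) are smooth on~$\Sigma'$ and vanish identically near~$\partial\Sigma'$, so $u_k(x')\cos(k\theta)$ and $v_k(x')\sin(k\theta)$ extend smoothly across~$A$ by zero and lie in~$C^\infty_c(k,D)$ and~$C^\infty_{c,\mathrm{sin}}(k,D)$ respectively. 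Parseval's identity, applied pointwise to the $\theta$-integral and then integrated against $r\,e^{-\abs{x'}^2/4}\,d\vol^{n-1}_{\Sigma'}$ using dominated convergence on the compact set~$\supp\tilde f/G\subset D'$, shows that the partial sums converge to~$\tilde f$ in $L^2_{\mathrm w}(D)$. Combining this with the cutoff approximation exhibits the algebraic direct sum as dense in $C^\infty_c(D)$, hence in $L^2_{\mathrm w}(D)$, and together with the closed-ness and orthogonality from the previous step this yields the Hilbert-space direct-sum decomposition~\eqref{l2decomp}.

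The main obstacle is the behavior on the axis~$A$: for a generic smooth~$f$, the raw Fourier modes $u_k\cos(k\theta)$ are smooth off~$A$ but in general fail to extend smoothly across~$A$, so they need not sit in~$C^\infty_c(k,D)$ as defined. The cutoff~$\chi_\epsilon$ is precisely what sidesteps this obstruction, at the cost of an arbitrarily small $L^2_{\mathrm w}$-error, and its use relies on the codimension-two conclusion from Lemma~\ref{sigmaprimeprops}. Once this is in hand, the rest is bookkeeping.
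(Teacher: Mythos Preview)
Your argument is correct and follows the same Fourier-in-$\theta$ strategy as the paper, whose proof is much terser and does not explicitly discuss the axis at all. One remark: the ``main obstacle'' you flag is not actually present. If $f\in C^\infty_c(D)$, then each projection $a_k(x')\cos(k\theta)$ is automatically smooth across~$A$: near an axis point the tangent space to~$\Sigma$ necessarily contains the rotation plane, so in local graph coordinates the Euclidean variables $x_n,x_{n+1}$ parametrize~$\Sigma$, the reflection $x_{n+1}\mapsto -x_{n+1}$ makes sense there and kills the sine modes, and the $SO(2)$-average $\tfrac{1}{\pi}\int_0^{2\pi}(\,\cdot\,)(R_\phi x)\cos(k\phi)\,d\phi$ applied to the symmetrized function then isolates $a_k\cos(k\theta)$ as a smooth function. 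Thus your cutoff~$\chi_\epsilon$, while a valid workaround, is unnecessary.
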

\begin{proof}
If $f \in L^2_{\mathrm w}(D)$, we can view it as a function on~$D' \times S^1$, and decompose it as a Fourier series in the variable $\theta$. Let
\begin{align*}
a_k(x') &= \frac{1}{\pi}\int_{\theta=0}^{2\pi} f(x', \theta) \cos(k\theta) d\theta\quad(k\geq 0) \\
b_k(x') &= \frac{1}{\pi}\int_{\theta=0}^{2\pi} f(x', \theta) \sin(k\theta) d\theta\quad(k \geq 1)\text.
\end{align*}
By Fourier theory, we can write
\begin{equation*}
f(x', \theta) = 2a_0(x') + \sum_{k\geq 1} (a_k(x') \cos(k\theta) + b_k(x') \sin(k\theta))
\end{equation*}
with the convergence in~$L^2_{\mathrm w}(\Sigma)$. This gives the desired decomposition.

The fact that the decomposition is orthogonal simply follows from the orthogonality of the basis functions $\cos(k\theta)$ and $\sin(l\theta)$ in~$L^2(S^1)$. For example, suppose that $f = u(x') \cos(k\theta)$ and $g = v(x') \cos(l\theta)$, with $k \neq l$. Then
\begin{align*}
[fg]_D &= \int_{\theta=0}^{2\pi} [uv]'_{D'} \cos(k\theta) \cos(l\theta) d\theta \\
&= 0.
\end{align*}
because $\cos(l\theta)$ and $\cos(k\theta)$ are orthogonal in~$L^2(S^1)$.
\end{proof}
The decomposition of the previous lemma preserves $H^1$-regularity, a fact that will be helpful for later analysis:
\begin{lemma}
\label{h1decomp}
Let $\Sigma$ be a rotationally symmetric immersed hypersurface in~$\bbR^{n+1}$, and let $D$ be a rotationally symmetric open subset of~$\Sigma$.
If $f$ is any function in $L^2_{\mathrm w}(D)$, we let $f_k$ denote the orthogonal projection of~$f$ onto~$L^2_{\mathrm w}(k, D)$.
\begin{enumerate}[(i)]
\item If $f \in C^\infty(D)$, then $\norm{f_k}_{H^1_{\mathrm w}} \leq \norm{f}_{H^1_{\mathrm w}}$.
\item If $f \in H^1_{0, \mathrm w}(D)$, then $f_k \in H^1_{0,\mathrm w}(D)$.
\end{enumerate}
\end{lemma}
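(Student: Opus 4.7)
\emph{Plan.}
The central ingredient for both parts is a Parseval-type identity: for smooth $f$ on $D$,
\begin{equation*}
\norm{f}_{H^1_{\mathrm w}}^2 = \sum_k \norm{f_k}_{H^1_{\mathrm w}}^2,
\end{equation*}
where the sum ranges over both cosine and sine modes. Granted this, (i) is immediate. To prove it, I would first observe that the induced metric on $\Sigma$ has the warped-product form $g_\Sigma = g_{\Sigma'} + r^2 \, d\theta^2$ away from the axis $A$, so that
\begin{equation*}
\abs{\nabla_\Sigma f}^2 = \abs{\nabla_{\Sigma'} f}^2 + r^{-2}\bigl(\partial_\theta f\bigr)^2.
\end{equation*}
For each fixed $x' \in \Sigma'$ with $r(x')>0$, expanding $f(x',\cdot)$ as a smooth Fourier series and using the orthogonality of $\{\cos k\theta,\sin l\theta\}$ in $L^2(S^1)$ yields the fibrewise Parseval identity
\begin{equation*}
\int_0^{2\pi}\bigl(f^2 + \abs{\nabla_\Sigma f}^2\bigr)\,d\theta = \sum_k \int_0^{2\pi}\bigl(f_k^2 + \abs{\nabla_\Sigma f_k}^2\bigr)\,d\theta,
\end{equation*}
since every cross term carries a factor $\cos k\theta\cos l\theta$, $\sin k\theta\sin l\theta$, or $\cos k\theta\sin l\theta$ with distinct indices, all of which integrate to zero over $[0,2\pi]$. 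Integrating against the weight $r e^{-\abs{x'}^2/4}$ on $\Sigma'$ then gives the global identity.

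The only analytic subtlety is the behaviour at $\{r=0\}$: for smooth $f$, the angular derivative $\partial_\theta f$ vanishes to order $r$ along the axis (this follows from smoothness in the Cartesian coordinates, since $\partial_\theta = x_n\partial_{x_{n+1}} - x_{n+1}\partial_{x_n}$), so $r^{-2}(\partial_\theta f)^2$ is bounded, and the extra factor of $r$ in the weighted volume form makes everything integrable.

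For (ii), choose a sequence $f^{(m)} \in C^\infty_c(D)$ with $f^{(m)} \to f$ in $H^1_{0,\mathrm w}(D)$. Each $f^{(m)}_k$ still lies in $C^\infty_c(D)$: its support is contained in the $SO(2)$-saturation of $\supp f^{(m)}$, which is compact and, by the rotational symmetry of $D$, contained in $D$; smoothness across the axis is a standard consequence of smoothness on $\bbR^{n+1}$, since the radial profile $a_k(r)$ of the $k$-th Fourier mode of a smooth function vanishes to order $r^k$ at the origin. Applying (i) to differences $f^{(m)} - f^{(m')}$ shows that $\{f^{(m)}_k\}$ is $H^1_{0,\mathrm w}(D)$-Cauchy, converging to some $g \in H^1_{0,\mathrm w}(D)$; continuity of the $L^2_{\mathrm w}$-projection forces $g = f_k$.

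The main delicate step is the smoothness of the Fourier projection across the axis; once that is in hand, everything else is a routine combination of the Parseval identity and a density argument.
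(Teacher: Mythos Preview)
Your argument is correct and close in spirit to the paper's, but the mechanics differ. The paper never writes down the full Parseval identity; instead it bounds a single Fourier mode directly by Cauchy--Schwarz. Concretely, from the integral formula $f_k(x',\theta)=\tfrac{1}{\pi}\cos(k\theta)\int_0^{2\pi} f(x',\theta')\cos(k\theta')\,d\theta'$ it integrates by parts in~$\theta'$ and applies Cauchy--Schwarz to obtain, after integrating in~$\theta$, the fibrewise Bessel-type bounds
\[
\int_0^{2\pi}\tfrac{1}{r^2}\lvert\partial_\theta f_k\rvert^2\,d\theta \le \int_0^{2\pi}\tfrac{1}{r^2}\lvert\partial_\theta f\rvert^2\,d\theta,
\qquad
\int_0^{2\pi}\lvert\nabla_{x'} f_k\rvert^2\,d\theta \le \int_0^{2\pi}\lvert\nabla_{x'} f\rvert^2\,d\theta,
\]
and then integrates over $\Sigma'$. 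Your Parseval route is cleaner and yields the sharper equality $\sum_k\norm{f_k}^2_{H^1_{\mathrm w}}=\norm{f}^2_{H^1_{\mathrm w}}$, from which the one-mode inequality is immediate; the paper's approach avoids any discussion of convergence of the full Fourier series but gives only what is needed. For part~(ii) the paper simply says ``follows from part~(i) by an approximation argument'', whereas you spell out why $f^{(m)}_k$ remains in $C^\infty_c(D)$, in particular the smoothness of the projection across the axis. That care is warranted and is a point the paper leaves implicit.
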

\begin{proof}$ $
\begin{enumerate}[(i)]
\item We can view any function $f(x)$ on~$\Sigma$ as a function $f(x', \theta)$ on $\Sigma' \times S^1$. Under this framework, we have $\abs{\nabla f}^2 = \tfrac{1}{r^2}\abs{\partial f/\partial \theta}^2 + \abs{\nabla_{x'} f}^2$.

$f_k$ is given by
\begin{equation*}
f_k(x', \theta) = \frac 1\pi \int_{\theta' = 0}^{2\pi} f(x', \theta') \cos(k\theta') d\theta' \cos(k\theta)\text.
\end{equation*}
(In the case $k=0$, the above formula is wrong by a factor of $2$. To avoid cumbersome notation, we will assume $k \neq 0$, but the same proof works for that case as well.)

Using this formula, we will estimate $\abs{\nabla f_k}^2$ by decomposing it as $\tfrac{1}{r^2}\abs{\partial_\theta f_k}^2 + \abs{\nabla_{x'} f_k}^2$. For the first term, we have
\begin{align*}
\partial_\theta f_k &= -\frac 1\pi k\sin(k\theta) \int_{\theta' = 0}^{2\pi} f(x', \theta') \cos(k\theta') d\theta' \\
&= \frac 1\pi \sin(k\theta)\int_{\theta'=0}^{2\pi} f(x', \theta') \frac{d}{d\theta'} \sin(k\theta') d\theta' \\
&= -\frac 1\pi \sin(k\theta) \int_{\theta'=0}^{2\pi} \frac{\partial}{\partial \theta'} f(x', \theta') \sin(k\theta') d\theta'\text.
\end{align*}
By Cauchy-Schwarz, it follows that
\begin{align*}
\frac{1}{r^2}\abs{\partial_\theta f_k}^2 &\leq \frac 1{\pi^2 r^2} \sin^2(k\theta) \int_{\theta'=0}^{2\pi} \abs{\partial_{\theta'} f(x', \theta')}^2 d\theta' \int_{\theta'=0}^{2\pi} \sin^2(k\theta') d\theta' \\
&= \frac 1{\pi r^2}\sin^2(k\theta) \int_{\theta'=0}^{2\pi} \abs{\partial_{\theta'} f(x', \theta')}^2 d\theta'
\end{align*}
and after integrating with respect to $\theta$, with $x'$ fixed, we get
\begin{equation}
\label{dthetabound}
\frac{1}{r^2}\int_{\theta=0}^{2\pi} \abs{\partial_\theta f_k}^2 d\theta \leq \frac{1}{r^2}\int_{\theta=0}^{2\pi} \abs{\partial_{\theta} f}^2 d\theta\text.
\end{equation}
As for the term $\abs{\nabla_{x'} f_k}^2$, we use Cauchy-Schwarz directly:
\begin{align*}
\abs{\nabla_{x'} f_k}^2 &= \frac 1{\pi^2} \abs{\int_{\theta'=0}^{2\pi} \nabla_{x'} f(x', \theta') \cos(k\theta') d\theta'}^2 \cos^2(k\theta) \\
&\leq \frac 1{\pi^2} \int_{\theta'=0}^{2\pi} \abs{\nabla_{x'} f(x', \theta')}^2 d\theta' \int_{\theta'=0}^{2\pi} \cos^2(k\theta') d\theta' \cos^2(k\theta) \\
&= \frac 1{\pi} \int_{\theta'=0}^{2\pi} \abs{\nabla_{x'} f(x', \theta')}^2 d\theta' \cos^2(k \theta)\text.
\end{align*}
We integrate with respect to $\theta$, to get
\begin{equation}
\label{dxprimebound}
\int_{\theta=0}^{2\pi} \abs{\nabla_{x'} f_k}^2 d\theta \leq \int_{\theta = 0}^{2\pi} \abs{\nabla_{x'} f}^2 d\theta
\end{equation}
By adding \eqref{dthetabound}~and~\eqref{dxprimebound}, and integrating with respect to $x'$ (with weight $r e^{-\abs{x'}^2/4}$), we get $[\abs{\nabla f_k}^2]_{D} \leq [\abs{\nabla f}^2]_{D}$, and the result follows.
\item This follows from part (i) by an approximation argument.
\end{enumerate}
\end{proof}
Due to smoothness, functions in $C^\infty(k, D)$ are restricted in their possible behavior near the symmetry axis~$A = \{r=0\}$, as the following lemma shows. This will be useful later in dealing boundary terms at~$A$.
\begin{lemma}
\label{specialbehavior}
Suppose that $\Sigma'$ is a rotationally symmetric, smooth, immersed hypersurface in~$\bbR^{n+1}$. Take a function~$f \in C^\infty(k, D)$, and write $f = u(x') \cos(k\theta)$. Let $p' \in \partial \Sigma'$, and let $X' \in T_{p'} \Sigma'$ be the unit normal to~$\partial \Sigma'$.
\begin{enumerate}[(i)]
\item If $k \neq 1$, then $X' \cdot \nabla u$ vanishes at~$p'$. 
\item If $k \geq 1$, then $u = 0$ at~$p'$.
\end{enumerate}
\end{lemma}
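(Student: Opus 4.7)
The plan is to exploit the $SO(2)$-invariance of $\Sigma$ together with the smoothness of $f$ on $\Sigma$ to obtain a normal form for $u$ near the axis~$A$. Fix a lift $p \in \Sigma \cap A$ of $p' \in \partial \Sigma'$.

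First I would set up local coordinates adapted to the symmetry. From the proof of Lemma~\ref{sigmaprimeprops}, $\Sigma$ meets $A$ transversely at~$p$. Because $T_p\Sigma$ is $n$-dimensional, $SO(2)$-invariant, and transverse to $T_pA$, a dimension count ($\dim(T_p\Sigma\cap T_pA)=n-2$, complementary dimension $2$, and the only nontrivial $SO(2)$-invariant $2$-plane available) forces $T_p\Sigma = (T_p\Sigma\cap T_pA) \oplus N$, where $N = \operatorname{span}(\partial/\partial x_n, \partial/\partial x_{n+1})$ is the $2$-plane normal to $A$ at~$p$. After rotating within $T_pA$ so that $T_p\Sigma\cap T_pA = \operatorname{span}(\partial/\partial x_1,\dots,\partial/\partial x_{n-2})$, the implicit function theorem writes $\Sigma$ near~$p$ as a graph $x_{n-1} = G(x_1,\dots,x_{n-2}, x_n, x_{n+1})$. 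The $SO(2)$-invariance of~$\Sigma$ forces $G$ to be rotationally invariant in $(x_n, x_{n+1})$, hence $G = \tilde G(x_1,\dots,x_{n-2}, r^2)$ for some smooth $\tilde G$. Writing $\xi = (x_1,\dots,x_{n-2})$, this yields a parametrization $\Psi'(\xi, r) = (\xi,\tilde G(\xi, r^2), r)$ of $\Sigma'$ near~$p'$, with $\partial \Sigma'$ corresponding to $r=0$.

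Next I would invoke the standard normal form for smooth Fourier modes. Set $v(\xi, r) = u(\Psi'(\xi, r))$, so that $f = v(\xi, r)\cos(k\theta)$ is a smooth function of the coordinates $(\xi, x_n, x_{n+1})$. The classical fact that a smooth function of $(x_n, x_{n+1})$ of the form $h(r)\cos(k\theta)$ must have $h(r) = r^k\, \tilde h(r^2)$ with $\tilde h$ smooth, applied with $\xi$ as a parameter, yields $v(\xi, r) = r^k\, \tilde v(\xi, r^2)$ with $\tilde v$ smooth.

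Finally I would read off the conclusions. At $r=0$ one computes $\partial_{\xi_i}\Psi'|_{r=0}$ with zero $r$-component and $\partial_r\Psi'|_{r=0} = \partial/\partial r$, so $\partial/\partial r$ is Euclidean-orthogonal in $Q$ to $T_{p'}\partial\Sigma'$; hence $X' = \pm\partial/\partial r$ and $X'\cdot\nabla u = \pm\partial_r v(\xi, 0)$. For~(ii), $k\geq 1$ gives $v(\xi, 0) = 0$, i.e.\ $u(p') = 0$. For~(i), either $k=0$ (so $v = \tilde v(\xi, r^2)$) or $k\geq 2$ (so $v = r^k\tilde v(\xi, r^2)$); in either case $\partial_r v|_{r=0} = 0$. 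The one mildly technical point I expect to work out carefully is the parametric version of the $r^k\tilde h(r^2)$ normal form with joint smoothness in $\xi$, which should go through the usual proof of that fact via Taylor expansion in $(x_n, x_{n+1})$ with $\xi$ held fixed.
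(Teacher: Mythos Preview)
Your argument is correct, but it follows a different route than the paper. The paper avoids local coordinates and the normal-form lemma entirely by arguing directly from symmetry at the single point~$p$. For~(ii) it simply observes that when $k\ge 1$ the factor $\cos(k\theta)$ vanishes for some~$\theta_0$, so $f$ vanishes on the half-hyperplane $\{\theta=\theta_0\}$ and hence, by continuity, on~$A$; this forces $u(p')=0$. For~(i) it uses the identity $X_\phi\cdot\nabla f\big|_p = \cos(k\phi)\,(X'\cdot\nabla u)(p')$, where $X_\phi\in T_p\Sigma$ is the unit vector in the $\theta=\phi$ direction; for $k\ge 2$ one sums over $\phi_j=2\pi j/k$ so that $\sum_j X_{\phi_j}=0$ while each $\cos(k\phi_j)=1$, and for $k=0$ one uses $\phi=0,\pi$. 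Either way the left side vanishes and the right side is a nonzero multiple of $X'\cdot\nabla u$. Your approach is heavier in setup (graph chart, the parametric $h(r)=r^k\tilde h(r^2)$ fact) but yields strictly more, namely the full asymptotic $u\sim r^k$ near~$\partial\Sigma'$, which the paper's trick does not give. The paper's proof, by contrast, is a two-line symmetry argument requiring no coordinate work at all.
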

\begin{proof}$ $
\begin{enumerate}[(i)]
\item Let $p \in A \subseteq \bbR^{n+1}$ denote the unique preimage of $p'$ under the quotient map~$\bbR^{n+1} \rightarrow Q$. Given an angle~$\phi$, let $X_{\phi} \in T_p \Sigma$ denote the unit vector pointing in the direction given by $\theta = \phi$. Then
\begin{equation}
\label{pushforwardnormal}
X_{\phi} \cdot \nabla f = \cos(k\theta) X' \cdot \nabla u\text.
\end{equation}

When $k \geq 2$, let $\phi_j = 2\pi j/k$ for $1 \leq j \leq k$. We note that $\cos(k\phi_j) = 1$ for all~$j$, and that $\sum_{j=1}^{k} X_{\phi_j} = 0$. Therefore, we obtain the desired result by taking the sum of $k$ instances of \eqref{pushforwardnormal} for these values of~$\phi$.

When $k = 0$, any angle $\phi$ has $\cos(k\phi) = 1$, so we can get the same result by taking the same sum with, for example, $X_0$~and~$X_\pi$.
\item If $k \geq 1$, then the function $\cos(k\theta)$ takes the value~$0$ for some~$\theta$. In other words, $f$ vanishes on certain half-spaces with boundary~$A$, and by continuity must vanish on~$\Sigma \cap A$ as well.
\end{enumerate}
\end{proof}
\subsection{Stability operator}
It turns out that the stability operator~$L$ maps the subsets~$C^\infty(k, \Sigma)$ into themselves. This will be a key point in the analysis of the spectrum of~$L$.
\begin{propn}
\label{lkpropn}Suppose that $\Sigma$ is a rotationally symmetric shrinker, and that $f \in C^\infty(k, \Sigma)$, with $f(x) = u(x')\cos(k\theta)$. Then
\begin{equation*}
Lf = L_k u \cos(k\theta)
\end{equation*}
where the operator~$L_k$, acting on functions on~$\Sigma'$, is given by
\begin{equation}
\label{lkformula}
L_k u = \mathcal L' u + \left(\abs{A}^2 + \frac{1}{r^2}\abs{e_r^\perp}^2 + \frac 12 - \frac{k^2}{r^2}\right)u\text,
\end{equation}
where $^\perp$ denotes orthogonal projection onto the normal bundle of~$\Sigma'$. 

In particular, if $f \in C^\infty(k, \Sigma)$, then $Lf \in C^\infty(k, \Sigma)$.
\end{propn}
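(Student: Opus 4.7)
The plan is to reduce the identity to a direct computation on $\Sigma'$ by exploiting the warped product structure of $\Sigma$ near points where $r>0$. I would parametrize $\Sigma$ locally by $(y_1,\dots,y_{n-1},\theta)$ with the $y_i$ parametrizing $\Sigma'$, so that the immersion reads $(y,\theta)\mapsto(x_1'(y),\dots,x_{n-1}'(y),r(y)\cos\theta,r(y)\sin\theta)$. A routine calculation then shows that the induced metric on $\Sigma$ is the warped product $g=g_{\Sigma'}+r(y)^2\,d\theta^2$, from which
\[
\Delta_\Sigma f=\Delta_{\Sigma'}f+\tfrac{1}{r}\nabla_{\Sigma'}r\cdot\nabla_{\Sigma'}f+\tfrac{1}{r^2}\partial_\theta^2 f.
\]
Substituting $f=u(x')\cos(k\theta)$ and using $\nabla_{\Sigma'}r=e_r^T$ together with $\partial_\theta^2 f=-k^2 f$ yields $\Delta_\Sigma f=\bigl[\Delta_{\Sigma'}u+\tfrac{1}{r}e_r^T\cdot\nabla u-\tfrac{k^2}{r^2}u\bigr]\cos(k\theta)$.

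For the drift term, I would observe that the ambient position vector satisfies $x\cdot\partial_\theta=0$ (immediate from the cylindrical-coordinate expression for $\partial_\theta$), so $x$ is horizontal and its tangential projection $x^T$ on $\Sigma$ coincides with the horizontal lift of $(x')^T$ on $\Sigma'$. Hence $x^T\cdot\nabla f=(x')^T\cdot\nabla u\cdot\cos(k\theta)$, and combining this with the Laplacian calculation gives $\mathcal Lf=\bigl[\mathcal L'u-\tfrac{k^2}{r^2}u\bigr]\cos(k\theta)$.

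Next I would decompose $|A|^2_\Sigma$ using the orthonormal frame $\{E_1,\dots,E_{n-1},E_\theta=\tfrac{1}{r}\partial_\theta\}$, where the $E_i$ are horizontal lifts of an orthonormal frame on $\Sigma'$. Since $\mathbf n\perp E_\theta$, the normal is horizontal and agrees with the unit normal $\mathbf n'$ to $\Sigma'$ in $Q$. A short verification (using that the ambient flat connection applied to horizontal fields independent of $\theta$ restricts to the flat connection on the meridian half-plane, which is isomorphic to $Q$) shows $A_\Sigma(E_i,E_j)=A_{\Sigma'}(E_i,E_j)$ and $A_\Sigma(E_i,E_\theta)=0$, while the identity $\bar\nabla_{\partial_\theta}\partial_\theta=-re_r$ gives $A_\Sigma(E_\theta,E_\theta)=\tfrac{e_r\cdot\mathbf n}{r}$. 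Summing squares yields $|A_\Sigma|^2=|A_{\Sigma'}|^2+\tfrac{|e_r^\perp|^2}{r^2}$. Assembling $Lf=\mathcal Lf+(|A_\Sigma|^2+\tfrac{1}{2})f$ and collecting terms then produces $Lf=\bigl[\mathcal L'u+\bigl(|A_{\Sigma'}|^2+\tfrac{|e_r^\perp|^2}{r^2}+\tfrac{1}{2}-\tfrac{k^2}{r^2}\bigr)u\bigr]\cos(k\theta)=L_ku\cos(k\theta)$, which is the stated formula (the $|A|^2$ appearing there being that of $\Sigma'$ in $Q$).

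All of the above is carried out on the open set $\{r>0\}$; to conclude on all of $\Sigma$, I would note that both sides of the claimed identity are smooth functions of $x\in\Sigma$ (on the right, the apparent $1/r^2$ poles at the axis are removable, because the smoothness of $f$ forces $u$ to vanish sufficiently fast at $\partial\Sigma'$, exactly as in Lemma~\ref{specialbehavior}), so the identity extends across $A=\{r=0\}$ by continuity. The most delicate step is the bookkeeping of the second-fundamental-form decomposition, in particular correctly identifying the rotational principal curvature as $\tfrac{e_r\cdot\mathbf n}{r}$; once that is in hand, the rest is a standard warped-product calculation.
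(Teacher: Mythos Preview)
Your proposal is correct and follows essentially the same route as the paper: both arguments establish the three ingredients (warped-product Laplacian formula, horizontality of the position vector for the drift term, and the block decomposition of $A_\Sigma$ with $A_\Sigma(E_\theta,E_\theta)=-r^{-1}e_r^\perp$) and then assemble them. The paper phrases the Laplacian identity via ``$\Sigma'_x$ totally geodesic plus geodesic curvature of the orbit circle'' rather than via the standard warped-product formula, but this is the same computation. Your explicit remark about extending the identity from $\{r>0\}$ to all of $\Sigma$ by continuity is a point the paper leaves implicit.
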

\begin{proof}
We can view $f$ as a function~$f(x', \theta)$ on $\Sigma' \times S^1$. The result will follow from the following three identities:
\begin{enumerate}[(i)]
\item $x^T \cdot \nabla^\Sigma f
 = (x')^T \cdot \nabla^{\Sigma'} f $, where $x$ is the position vector in~$\bbR^{n+1}$;
\item $\abs{A_\Sigma}^2 = \abs{A_{\Sigma'}}^2 + r^{-2}\abs{e^\perp_r}^2$; and
\item $\Delta^\Sigma f  = \Delta^{\Sigma'} f + \frac{1}{r^2}\frac{\partial^2}{d\partial^2} f  + \frac 1{r} e_r^T \cdot \nabla^{\Sigma'} f$.
\end{enumerate}
To establish some notation, pick an arbitrary point~$x \in \Sigma$. Thinking of $\Sigma$ as $\Sigma' \times S^1$ (with a warped metric), we write $x = (x', \theta)$, and  let $\Sigma'_x = \Sigma' \times \{\theta\}$ and $\Gamma = \{x'\} \times S^1$ denote the copies of $\Sigma'$~and~$S^1$, respectively, that pass  through~$x$. We will think of these as submanifolds of $\Sigma$, and hence immersed submanifolds of $\bbR^{n+1}$; $\Gamma$ is then just a circle around~$A$ and $\Sigma'_x$ an isometric copy of~$\Sigma'$ which is contained in a half-space with boundary~$A$.

We will show identities (i) to (iii) in turn:
\begin{enumerate}[(i)]
 \item This is immediate, the key point being that the position vector $x$ does not have any component in the $\theta$-direction.
 \item Consider the unit vector~$e_\theta = \frac 1r \frac{\partial}{\partial \theta}$. Then $e_\theta$, thought of as a vector in~$\bbR^{n+1}$, depends only on~$\theta$, and it follows that $A_\Sigma(e_\theta, X) = (\nabla^{\bbR^{n+1}}_X e_\theta)^\perp = 0$ for any vector~$X \in T_p \Sigma'_x$. From this, and the fact that $\Sigma'_x$ is totally geodesic in~$\Sigma$, we conclude that $\abs{A_\Sigma}^2 = \abs{A_{\Sigma'}}^2 + \abs{A_\Sigma(e_\theta, e_\theta)}^2$.
 
 On the other hand, we can easily calculate $A_\Sigma(e_\theta, e_\theta) = (\nabla^{\bbR^{n+1}}_{e_\theta} e_\theta)^\perp = -\frac 1r e_r^\perp$, and identity (ii) follows.
 \item $\Sigma'_x$ is totally geodesic in~$\Sigma$, and it follows that
\begin{equation*}
\Delta_\Sigma u = \Delta_{\Sigma'_x}u + \Delta_\Gamma u - \vec \kappa \cdot \nabla^\Sigma u
\end{equation*}
at~$x$, where $\vec \kappa$ is the geodesic curvature of $\Gamma$ in~$\Sigma$.

The geodesic curvature vector $\kappa$ of~$\Gamma$ in $\Sigma$ is just the orthogonal projection of the curvature vector of~$\Gamma$ in the ambient space $\bbR^{n+1}$ onto the tangent space of~$\Sigma$. It follows that $\vec \kappa = -e_r^T/r$. Furthermore, on the circle $\Gamma$, the Laplacian is given by$\Delta_\Gamma = \frac{1}{r^2}\frac{\partial^2}{\partial \theta^2}$, and (iii) follows.
\end{enumerate}
Substituting in the identities (i), (ii), and (iii) into the formula for~$L$, given in Proposition~\ref{secondvariationformula}, we get the result.
\end{proof}
As the above proposition suggests, the known eigenfunctions of~$L$ from Proposition~\ref{knowneigenfunctions} fit in well in the framework of~$C^\infty(k, \Sigma)$ functions:
\begin{propn}
\label{knownfuncsspecial}
Suppose that $\Sigma$ is a rotationally symmetric shrinker with mean curvature~$H$ and normal vector~$\mathbf n$.
\begin{enumerate}[(i)]
\item The function~$H$ belongs to $C^\infty(0, \Sigma)$.
\item For $i=1,\ldots, n-1$, the functions~$e_i \cdot \mathbf n$ also belong to $C^\infty(0, \Sigma)$.
\item The function~$e_n \cdot \mathbf n$ belongs to $C^\infty(1, \Sigma)$, and the function~$e_{n+1} \cdot \mathbf n$ belongs to $C^\infty_{\mathrm{sin}}(1, \Sigma)$.
\end{enumerate}
\end{propn}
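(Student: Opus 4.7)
The plan is to exploit the equivariance of the unit normal under the $SO(2)$ action: since $G$ acts on $\bbR^{n+1}$ by orientation-preserving isometries and is connected, one has $\mathbf n(g\cdot x) = g\cdot \mathbf n(x)$ for all $g \in G$ (in the immersed setting, after pulling the $G$-action back to $\Sigma$ via the immersion). Part~(i) is then immediate: the mean curvature $H$ is an ambient-isometry invariant, hence $G$-invariant, so it depends only on $x'$ and therefore lies in $C^\infty(0,\Sigma)$. Part~(ii) is just as quick: for $1\le i\le n-1$ the vector $e_i$ lies in the fixed subspace of $G$, so by equivariance $(e_i\cdot \mathbf n)(g\cdot x) = (g^{-1} e_i)\cdot \mathbf n(x) = e_i\cdot \mathbf n(x)$, making $e_i\cdot \mathbf n$ a $G$-invariant function, and hence an element of $C^\infty(0,\Sigma)$.

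For part~(iii), I would use the reference half-plane $H_0 = \{x_{n+1}=0,\ x_n\ge 0\}$ as a global slice for the $G$-action, writing any $x \in \Sigma$ with cylindrical coordinates $(x_1,\ldots,x_{n-1};r,\theta)$ as $x = g_\theta x_0$ with $x_0 \in H_0 \cap \Sigma$. When $r>0$, the vector $e_{n+1}$ at $x_0$ is tangent to the $G$-orbit through $x_0$, so $(e_{n+1}\cdot\mathbf n)(x_0) = 0$. Setting $u(x_0) := (e_n\cdot \mathbf n)(x_0)$, the equivariance of $\mathbf n$ together with the rotation formulas $g_\theta e_n = \cos\theta\, e_n + \sin\theta\, e_{n+1}$ and $g_\theta e_{n+1} = -\sin\theta\, e_n + \cos\theta\, e_{n+1}$ yields
\[
(e_n\cdot\mathbf n)(x) = u(x_0)\cos\theta,\qquad (e_{n+1}\cdot\mathbf n)(x) = u(x_0)\sin\theta.
\]
Identifying $H_0 \cap \Sigma$ with $\Sigma'$ via the quotient map, this is exactly the claim that $e_n\cdot\mathbf n \in C^\infty(1,\Sigma)$ and $e_{n+1}\cdot\mathbf n \in C^\infty_{\mathrm{sin}}(1,\Sigma)$.

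The one point requiring any care is smoothness of $u$ as a function on $\Sigma'$ up to and including $\partial\Sigma'$. Away from the axis this is immediate, since $e_n\cdot\mathbf n$ is smooth on $\Sigma$ and, by Lemma~\ref{sigmaprimeprops}, $\Sigma'$ is a smooth immersed hypersurface-with-boundary in $Q$. At an axis point $x_0 \in H_0\cap A$, equivariance forces $\mathbf n(x_0)$ to lie in the $G$-fixed subspace and so $u(x_0)=0$, consistent with Lemma~\ref{specialbehavior}(ii); smoothness of $u$ across this boundary then follows by restricting the smooth function $e_n\cdot\mathbf n$ on $\Sigma$ to the smooth slice $H_0\cap\Sigma$. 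I would expect this check at the axis to be the only mildly non-routine step, since the rest of the proposition is a direct consequence of equivariance.
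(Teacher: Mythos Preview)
Your argument is correct and is essentially the paper's proof: parts~(i) and~(ii) are dismissed there as ``easy consequences of rotational symmetry'' (your equivariance formulation), and for part~(iii) the paper observes that the $x_nx_{n+1}$-component of $\mathbf n$ points in the $e_r$-direction with magnitude $\mathbf n_{\Sigma'}\cdot e_r$, which is exactly your equivariance computation with $u=\mathbf n_{\Sigma'}\cdot e_r$. The only cosmetic difference is that identifying $u$ directly as $\mathbf n_{\Sigma'}\cdot e_r$ makes smoothness up to $\partial\Sigma'$ immediate from Lemma~\ref{sigmaprimeprops}, so the paper does not need your separate check at the axis.
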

\begin{proof}(i) and (ii) are easy consequences of rotational symmetry. To see that (iii) is true, note that the projection of~$\mathbf n$ onto the~$x_n x_{n+1}$-plane is a vector of magnitude~$\mathbf n_{\Sigma'} \cdot e_r$ and direction parallel to $e_r = e_n \cos \theta + e_{n+1} \sin \theta$. In other words, $\mathbf n \cdot e_n = (\mathbf n_{\Sigma'} \cdot e_r) \cos \theta$, and similarly $\mathbf n_\Sigma \cdot e_{n+1} = (\mathbf n_{\Sigma'} \cdot e_r) \sin \theta$. \end{proof}
\section{Spectrum of the stability operator}
\subsection{Restricted spectra on compact domains}
In this section, we will study the spectrum of the operator~$L$. In light of Propositions \ref{fouriersum}~and~\ref{lkpropn},
it is natural to do so by decomposing the spectrum into separate parts corresponding to each $C^\infty(k, \Sigma)$. 

We will develop this spectral theory in the compact setting first. Let $D$ be a bounded, rotationally symmetric subdomain of~$\Sigma$.
\begin{defn}By the \emph{$k$-th restricted (Dirichlet) eigenvalue spectrum} of $L$ on $D$,  we mean the set of~$\mu \in \bbR$ for which there exists a nonzero function~$f \in C^\infty_0(k, D)$ such that $Lu + \mu u = 0$.
\end{defn}
\begin{rmk}
If $\Sigma \cap A = \emptyset$, then the restricted spectrum is just the spectrum of the operator~$L_k$, and the results of this section are standard consequences of the spectral theory of self-adjoint elliptic operators.

When $\Sigma$ does intersect $A$, we need to work harder. On one hand, the coefficients of~$L_k$ are unbounded near $A$, and the standard elliptic theory of~$L_k$ does not immediately give enough control on the behavior of the solutions near this boundary. On the other hand, functions in~$C^\infty(D')$ coming from $C^\infty(k, D)$ have restrictions on their behavior near $\partial \Sigma'$ (see Lemma~\ref{specialbehavior}), and this will help us finish the proof.\end{rmk}

We will work in a (weighted) Sobolev setting, and it will be convenient to define a bilinear form corresponding to~$L$. If $f, g \in C_c^\infty(D)$, we let
\begin{equation*}
B_L(f, g) = [-fLg]_D = \left[\nabla f \cdot \nabla g - \left(\abs{A}^2 + \frac 12\right)fg\right]_D\text.
\end{equation*}
In light of the second expression, $B_L$ extends to a bounded bilinear form on~$H^1_{0,\mathrm w}(D)$.

The restricted spectra satisfy all the usual properties we expect from spectra of second-order elliptic operators: 
\begin{thm}
\label{compactspectraltheorem}
Fix $k$, and let $D$ be a rotationally symmetric, bounded, open subset of the rotationally symmetric shrinker $\Sigma$.
\begin{enumerate}[(i)]
\item The $k$-th restricted eigenvalue spectrum of~$L$ consists of real eigenvalues $\mu_1 < \mu_2 \leq \mu_3 \leq \cdots$, where the corresponding eigenfunctions $f_1, f_2, \ldots$ form an orthonormal basis for~$L^2_{\mathrm w}(k, D)$.
\item  $\mu_1$ can be characterized by
\begin{equation}
\label{muinf}
\mu_1 = \inf_f B_L(f, f)
\end{equation}
where the infimum is taken over functions~$f \in H^1_{0,\mathrm w}(k, D)$ with $\norm{f}_{L^2_{\mathrm w}(D)} = 1$.
\item If some~$f \in H^1_{0, \mathrm w}(k, D)$ with $\norm f_{L^2_{\mathrm w}(D)} = 1$ satisfies $\mu_1 = B_L(f, f)$, then $Lf + \mu_1 f = 0$.
\item The lowest eigenfunction~$f_1$ can be written as $f_1(x) = u(x') \cos(k\theta)$, where $u > 0$ in the interior of~$D' \setminus \partial \Sigma'$.
\item If $k=0$, then the lowest eigenfunction~$f$ in $C^\infty_0(0, D)$ is in fact also the lowest eigenfunction for~$L$ acting on~$C^\infty_0(D)$ (i.e. in the unrestricted spectrum), and $f$ is in fact positive in the whole interior of $D$.
\end{enumerate}
\end{thm}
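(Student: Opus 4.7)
The plan is to apply the direct method of the calculus of variations to the quadratic form $B_L$ on the Hilbert space $H^1_{0,\mathrm w}(k,D)$, using the Fourier-decomposition machinery of Proposition~\ref{fouriersum}, Lemma~\ref{h1decomp}, and Proposition~\ref{lkpropn} to move between restricted and unrestricted formulations. The central technical choice is to work with $L$ on $D\subset\Sigma$ rather than with $L_k$ on $D'\subset Q$: as a smooth elliptic operator on the smooth manifold $\Sigma$, $L$ has no singularities near the axis $A$, so standard compactness, regularity, and maximum-principle tools apply routinely, and Fourier decomposition is invoked only to translate between the restricted and unrestricted problems.

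For part~(ii), since $\abs{A}^2+\tfrac12$ is bounded on the precompact $D$, the form $B_L$ is bounded below and coercive on $H^1_{0,\mathrm w}(k,D)$ up to a shift. A minimizing sequence for the Rayleigh quotient is $H^1$-bounded, and the compact embedding $H^1_{0,\mathrm w}(D)\hookrightarrow L^2_{\mathrm w}(D)$---available because the weight is smooth and bounded above and below on the bounded set $D$---produces a minimizer $f_1$ attaining $\mu_1$. For part~(iii), the Euler--Lagrange equation gives $B_L(f_1,\varphi)=\mu_1[f_1\varphi]_D$ for every $\varphi\in C^\infty_c(k,D)$. To upgrade this to arbitrary $\varphi\in C^\infty_c(D)$, I would decompose $\varphi=\sum_j\varphi_j$ via Proposition~\ref{fouriersum}, note by Lemma~\ref{h1decomp}(ii) that each $\varphi_j$ lies in the appropriate restricted Sobolev space, and invoke Proposition~\ref{lkpropn}: since $L$ preserves the Fourier decomposition, $B_L(f_1,\varphi_j)$ and $[f_1\varphi_j]_D$ both vanish for $j\neq k$, so only the $k$-th piece survives and the identity extends. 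Thus $f_1$ weakly solves $Lf_1+\mu_1 f_1=0$ on $D$, and interior elliptic regularity for $L$ on the smooth manifold $\Sigma$ promotes $f_1$ to a classical solution. Part~(i) then follows from the standard inductive minimization on $L^2_{\mathrm w}$-orthogonal complements of previously constructed eigenfunctions, with completeness and $\mu_n\to\infty$ coming from the compact-resolvent picture.

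For part~(iv), writing $f_1=u\cos(k\theta)$, the substitution $u\mapsto\abs{u}$ preserves both the $L^2$-norm and (by $\abs{\nabla\abs{u}}=\abs{\nabla u}$ a.e.) the Dirichlet energy, so $\tilde f_1:=\abs{u}\cos(k\theta)$ is another minimizer; by part~(iii) it is a smooth solution of $L\tilde f_1+\mu_1\tilde f_1=0$ on $D$. Evaluating at any $\theta$ with $\cos(k\theta)\neq 0$ shows $\abs{u}$ is itself smooth, which prevents $u$ from changing sign, and the strong maximum principle then gives $u>0$ in the interior of $D'\setminus\partial\Sigma'$. For part~(v), I would take any positive lowest eigenfunction $f$ of $L$ acting on $C^\infty_c(D)$---which exists and is unique up to scaling by the same absolute-value construction combined with the strong maximum principle---and note that since $L$ is $SO(2)$-invariant and the lowest unrestricted eigenspace is one-dimensional, $f\circ g$ must equal $f$ for every $g\in SO(2)$, so $f\in C^\infty(0,D)$. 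This pins down the lowest unrestricted eigenvalue as the $k=0$ restricted one and identifies the eigenfunction.

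I expect the main subtlety to lie not in any one step but in the seamless interchange between the restricted variational problem (which naturally lives on $D'$ and, in the $L_k$ formulation, involves singular coefficients at $\partial\Sigma'$) and the full elliptic problem on $D$. The whole framework above is designed to do all regularity and maximum-principle work in the $L$-on-$D$ setting, relegating the Fourier structure to a passive bookkeeping role; the one place a less routine argument is needed is in part~(iv), where the smoothness of $\abs{u}$ across the zero set of $u$ must be deduced from the elliptic regularity of $\tilde f_1$ on $D$ rather than from the singular operator $L_k$ on $D'$.
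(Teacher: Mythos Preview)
Your proposal is correct and follows essentially the same route as the paper: both work with $L$ on $D$ rather than $L_k$ on $D'$, both use the Fourier decomposition (your inline argument is exactly the content of the paper's Lemma~\ref{borthlemma}) to promote the weak equation from restricted to arbitrary test functions before invoking elliptic regularity, and both handle part~(iv) via the $\lvert u\rvert$ substitution followed by Harnack/strong maximum principle for $L_k$ away from $\partial\Sigma'$, with part~(v) deduced from simplicity of the unrestricted ground state together with $SO(2)$-invariance. The only cosmetic differences are that the paper builds the spectrum via the compact resolvent rather than direct minimization, and in part~(iv) the correct logical order is that Harnack applied to the nonnegative solution $\lvert u\rvert$ gives $\lvert u\rvert>0$, from which the non-sign-changing of $u$ follows---not the other way around as your phrasing suggests.
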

\begin{rmk}For unrestricted spectra, these results are standard; see e.g.\ Section~6.5 of~\cite{evans}, where the results are proved in a slightly simpler setting. The proof here uses the same ideas, with the next lemma being the only new ingredient. The lemma can be thought of as a weak version of the statement that $L$ acts on the spaces $C^\infty(k, D)$.\end{rmk}
\begin{lemma}
\label{borthlemma}Let $D$ be a rotationally symmetric, bounded, open subset of the rotationally symmetric shrinker $\Sigma$, and fix an integer $k \geq 0$.
\begin{enumerate}[(i)]
\item Suppose that $f \in H^1_{0, \mathrm w}(k, D)$ and $g \in H^1_{0, \mathrm w}(D)$, and let $g_k$ denote the $L^2_{\mathrm w}$-orthogonal projection of~$g$ onto the subspace $L^2_{\mathrm w}(k, D)$. Then $B_L(f, g) = B_L(f, g_k)$. 
\item Let $f \in H^1_{0,\mathrm w}(k,D)$ and let $\alpha \in \bbR$. Suppose that the equation
\begin{equation}
\label{bortheqn}
B_L(f, h) + \alpha [fh]_D = 0
\end{equation}
holds for all~$h \in H^1_{0,\mathrm w}(k, D)$. Then in fact $B_L(f, g) + \alpha [fg]_D = 0$ holds for any~$g \in H^1_{0, \mathrm w}(D)$.
\end{enumerate}
\end{lemma}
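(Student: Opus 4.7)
The plan is to prove both parts by showing that the bilinear form $B_L$ respects the Fourier decomposition of Proposition~\ref{fouriersum}: namely, the subspace $L^2_{\mathrm w}(k, D) \cap H^1_{0,\mathrm w}(D)$ is $B_L$-orthogonal to all the other Fourier subspaces. Once this ``$B_L$-orthogonality'' is in hand, part~(i) is immediate (subtract $g_k$ from $g$ and note that the difference lies in the $B_L$-orthogonal complement of $f$), and part~(ii) follows by combining (i) with the $L^2_{\mathrm w}$-orthogonality of the Fourier modes.

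First, I would establish $B_L$-orthogonality in the smooth case. Take $f = u(x')\cos(k\theta) \in C^\infty_c(k, D)$ and $g = v(x')\cos(j\theta) \in C^\infty_c(j, D)$ with $j \neq k$. Writing $B_L(f,g) = [\nabla f \cdot \nabla g]_D - [(\abs{A}^2 + \tfrac12)fg]_D$ and splitting the gradient as $\nabla f \cdot \nabla g = \nabla_{x'} f \cdot \nabla_{x'} g + r^{-2} \partial_\theta f \, \partial_\theta g$, every resulting integrand factors as (something depending only on $x'$) times either $\cos(k\theta)\cos(j\theta)$ or $\sin(k\theta)\sin(j\theta)$. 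Since $\abs A^2$ and $r$ depend only on $x'$ by rotational symmetry, and since $\int_0^{2\pi} \cos(k\theta)\cos(j\theta)\,d\theta = \int_0^{2\pi} \sin(k\theta)\sin(j\theta)\,d\theta = 0$ whenever $j \neq k$, the integral over $\theta$ kills every term. The same computation handles cosine--sine cross terms (all integrals $\int \cos(k\theta)\sin(j\theta)\,d\theta$ vanish). For general $g \in C^\infty_c(D)$, I would expand $g$ as a Fourier series converging in $H^1_{0,\mathrm w}(D)$ (using Lemma~\ref{h1decomp}(i) to bound the tails), and use continuity of $B_L$ to conclude $B_L(f, g) = B_L(f, g_k)$.

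Next, I would extend to $H^1_{0,\mathrm w}$ by density. Given $f \in H^1_{0, \mathrm w}(k, D)$ and $g \in H^1_{0,\mathrm w}(D)$, choose approximating sequences $f_n \in C^\infty_c(k, D)$ and $g_n \in C^\infty_c(D)$ converging in $H^1_{0,\mathrm w}$. By Lemma~\ref{h1decomp}, the Fourier-projection map $g \mapsto g_k$ extends to a bounded linear operator on $H^1_{0,\mathrm w}(D)$, so $(g_n)_k \to g_k$ in $H^1_{0,\mathrm w}$. Since $B_L$ is continuous on $H^1_{0,\mathrm w}(D) \times H^1_{0,\mathrm w}(D)$, passing to the limit in $B_L(f_n, g_n) = B_L(f_n, (g_n)_k)$ yields part~(i). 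For part~(ii), given $g \in H^1_{0,\mathrm w}(D)$, Lemma~\ref{h1decomp}(ii) gives $g_k \in H^1_{0,\mathrm w}(k, D)$, so the hypothesis applies with $h = g_k$, yielding $B_L(f, g_k) + \alpha [f g_k]_D = 0$. By part~(i), $B_L(f, g) = B_L(f, g_k)$, and by orthogonality of the Fourier decomposition in $L^2_{\mathrm w}(D)$ we have $[fg]_D = [fg_k]_D$ since $f \in L^2_{\mathrm w}(k, D)$ and $g - g_k$ lies in the orthogonal complement. Combining these gives the conclusion.

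The main obstacle is purely technical: justifying that the projection $g \mapsto g_k$, defined a priori as an $L^2_{\mathrm w}$-orthogonal projection, is also bounded in $H^1_{0,\mathrm w}$-norm and preserves the boundary condition. This is precisely what Lemma~\ref{h1decomp} provides, so the proof reduces to the smooth orthogonality calculation plus a clean density argument, with no further analytic difficulty.
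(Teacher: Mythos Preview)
Your proof is correct and follows the same overall architecture as the paper's: establish the identity for smooth compactly supported functions, then pass to $H^1_{0,\mathrm w}$ by density, using Lemma~\ref{h1decomp} to control the projection $g \mapsto g_k$. Part~(ii) is handled identically in both.

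The only real difference is in the smooth step. You verify $B_L$-orthogonality mode-by-mode via a direct trigonometric computation on the expanded form $B_L(f,g) = [\nabla f \cdot \nabla g - (\abs{A}^2 + \tfrac12)fg]_D$, and then pass to general $g \in C^\infty_c(D)$ by summing its Fourier series. The paper instead invokes Proposition~\ref{lkpropn}: since $Lf \in C^\infty(k, D)$ whenever $f$ is, one has $B_L(f,g) = -[(Lf)g]_D = -[(Lf)g_k]_D = B_L(f,g_k)$ in one stroke from $L^2_{\mathrm w}$-orthogonality, for arbitrary smooth $g$. This is a bit cleaner because it never needs the Fourier series of $g$ to converge in $H^1_{0,\mathrm w}$. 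On that point, your justification ``using Lemma~\ref{h1decomp}(i) to bound the tails'' is not quite what is needed: that lemma bounds each individual projection by $\norm{g}_{H^1_{\mathrm w}}$, not the tail of the series. The $H^1$-convergence you want does hold for $g \in C^\infty_c(D)$ (rapid decay of Fourier coefficients of a smooth function of $\theta$), but it is an extra ingredient; alternatively you could bypass it by applying your trigonometric computation directly to $g - g_k$, which is smooth with vanishing $k$-th Fourier coefficient.
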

\begin{proof}$ $
\begin{enumerate}[(i)]
\item Suppose first that $f$ and $g$ are smooth. Proposition~\ref{lkpropn} gives $Lf \in C^\infty(k, D)$, and by orthogonality of the spaces $L^2_{\mathrm w}(k, D)$, we have
\begin{equation*}
B_L(f, g) = -[(Lf) g]_D = -[(Lf)g_k]_D = B_L(f, g_k)\text,
\end{equation*}
which is the desired identity.

In the general case, Lemma~\ref{h1decomp} implies that $g \mapsto g_k$ is continuous as a map $H^1_{0,\mathrm w}(D) \rightarrow H^1_{0,\mathrm w}(k, D)$, and the result follows from a continuity argument.
\item Decompose $g$ as $g = g_k + g'$. By part (i), $B_L(f, g) = B_L(f, g_k)$, and by simple orthogonality $\alpha [fg]_D = \alpha [fg_k]_D$. Thus 
\begin{equation*}
B_L(f, g) + \alpha[fg]_D = B_L(f, g_k) + \alpha[fg_k]_D\text,
\end{equation*}
which is zero by assumption.
\end{enumerate}
\end{proof}
\begin{proof}[Proof of Theorem \ref{compactspectraltheorem}]$ $
\begin{enumerate}[(i)]
\item  We can pick $\gamma > \sup_D \abs{A}^2 + \frac 12$ and define $B_\gamma(f, g) = B_L(f, g) + [fg]_D$. Then $B_\gamma$ defines an inner product on $H^1_{0,\mathrm w}(k,D)$, which satisfies $\norm{f}_{H^1_{\mathrm w}} \leq C B_\gamma(f, f)$ for some $C$.

We now define an operator $R:L^2_{\mathrm w}(k,D) \rightarrow H^1_{0,\mathrm w}(k,D)$ as follows. Let $h \in L^2_{\mathrm w}(k, D)$. By the Riesz representation theorem, applied to the Hilbert space $(H^1_{0,\mathrm w}(k,D), B_\gamma)$, we can find a unique $f \in H^1_{0, \mathrm w}(k, D)$ such that $B_\gamma(f, g) = [hg]_{D}$ for all $g \in H^1_{0,\mathrm w}(k,D)$. We then set $Rh = f$; it is easy to see that $R$ is bounded.

By the Rellich-Kondrachov theorem, $R$ is compact when thought of as a operator $L^2_{\mathrm w}(k, D) \rightarrow L^2_{\mathrm w}(k,D)$. It is also self-adjoint and positive definite, and it follows that it has a discrete spectrum of eigenvalues $\lambda_1 \geq \lambda_2 \geq \cdots > 0$, with $\lambda_i \rightarrow 0$, and with the corresponding eigenfunctions~$f_i$ forming an orthonormal basis of~$L^2_{\mathrm w}(k, D)$.

By the definition of $R$, the eigenfunctions $f_i$ satisfy 
\begin{equation}
\label{bgammaeqn}
 B_\gamma(f_i, g) = \lambda_i^{-1} [f_i g]_{D}
\end{equation}
for all $g \in H^1_{0,\mathrm w}(k, D)$. By Lemma \ref{borthlemma}(ii), the equation \eqref{bgammaeqn} in fact holds for all $g \in H^1_{0,\mathrm w}(D)$. In other words, $f_i$ is a weak solution of $(-L+\gamma)f_i = \lambda_i^{-1} f_i$. By standard elliptic regularity, it follows that the $f_i$ are smooth, and that the relation $(L+\gamma)f_i = \lambda_i^{-1} f_i$ holds in the classical sense.

Setting $\mu_i = \lambda_i^{-1} - \gamma$, this proves (i), except for the assertion that $\mu_1 < \mu_2$, which we will leave for later.
\item On one hand, $B_L(f_1, f_1) = \mu_1$, so the infimum is at most $\mu_1$. 

On the other hand, suppose $f \in H^1_{0,\mathrm w}(k, D)$, with $\norm{f}_{L^2_{\mathrm w}(D)} = 1$. We need to show that $B_L(f, f) \geq \mu_1$, or equivalently, that $B_\gamma(f, f) \geq \mu_1 + \gamma = \lambda_1^{-1}$. 

Since the eigenfunctions form an orthonormal basis for $L^2_{\mathrm w}(k, D)$, we can write $f = \sum_{i=1}^\infty a_i f_i$, with the convergence in $L^2_{\mathrm w}$. The coefficients $a_i$ satisfy $\sum_{i=1}^\infty a_i^2 = 1$. 

We also have the orthogonality relations
\begin{equation*}
B_\gamma(f_i, f_j) = [-f_j L f_i]_D = \begin{cases}\mu_i + \gamma &\text{if $k=l$} \\ 0 &\text{otherwise.}\end{cases}
\end{equation*}
By Bessel's inequality, applied to the Hilbert space $(H^1_{0,\mathrm w}(k, D), B_\gamma)$, it follows that $B_\gamma(f, f) \geq \sum_{i=1}^N \abs{B_\gamma(f, f_i)}^2 (\mu_i + \gamma)^{-1}$ for all $N$. Furthermore, $B_\gamma(f, f_i) = [-fLf_i]_D = (\mu_i + \gamma) [f f_i]_D = (\mu_i + \gamma)a_i$, so we get $B_\gamma(f, f) \geq \sum_{i=1}^N(\mu_i + \gamma)a_i^2 \geq (\mu_1 + \gamma) \sum_{i=1}^N a_i^2$. This holds for all~$N$, so we conclude that $B_\gamma(f, f) \geq \mu_i + \gamma$, which proves~(ii).
\item Let $g \in H^1_{0, \mathrm w}(k, D)$. I claim that 
\begin{equation}
\label{blfg}
B_L(f, g) = \mu_1 [fg]_D\text.
\end{equation}
By decomposing $g$ as $g = [fg]_D f + g'$, we see that it is sufficient to prove \eqref{blfg} for $g$ orthogonal to $f$ in $L^2_{\mathrm w}(D)$. For such $g$, we have $\frac{d}{ds}\norm{f+sg}_{L^2_{\mathrm w}}\rvert_{s=0} = 0$, and since $f$ minimizes $B_L(f, f)$, it follows that $\frac{d}{ds} B_L(f+sg,f+sg) = 0$, or in other words, $B_L(f, g) = 0$. This proves~\eqref{blfg} for $g \in H^1_{0,\mathrm w}(k, D)$.

By Lemma~\ref{borthlemma}(ii), the equation~\eqref{blfg} is in fact valid for all $g \in H^1_{0, \mathrm w}(D)$. In other words, $Lf + \mu_1 f = 0$ in the weak sense, and by elliptic regularity this holds in the classical sense as well. 
\item Suppose that $f_1 \in C^\infty_0(k, D)$ satisfies $Lf_1 + \mu_1 f_1 = 0$. Without loss of generality, assume that $\norm{f_1}_{D} = 1$. Write $f_1 = u(x') \cos(k\theta)$, for some function~$u$ on $D' \subset \Sigma'$. Consider the function $\tilde f_1 = \abs{u}\cos(k\theta)$. Since $B_L(\tilde f_1, \tilde f_1) = B_L(f_1, f_1) = \mu_1$ and $\norm{\tilde f_1}_{D} = 1$, it follows from part~(iii) that $L\tilde f_k + \mu_1 \tilde f_1 = 0$, and by Proposition~\ref{lkpropn}, we get $L_k \abs{w} + \mu_1 \abs{w} = 0$. 

In any compact region away from the boundary of~$D'$, $L_k$ is a uniformly elliptic operator with bounded coefficients. By applying the Harnack inequality (see e.g.\ Theorem~8.20 of~\cite{gilbargtrudinger}), we conclude that $\abs{u} > 0$ away from~$\partial D'$. In other words, $u$ does not vanish anywhere except possibly on~$\partial \Sigma'$. By Lemma~\ref{sigmaprimeprops}, $\Sigma' \setminus \partial \Sigma'$ is connected, and by changing the sign of~$u$ if necessary, we have shown that $u > 0$ in the interior of~$D'$.

This also shows that $\mu_1 < \mu_2$. Assume otherwise. Then there are two linearly independent eigenfunctions of eigenvalue $\mu_1$. By taking linear combinations of those, we can construct an eigenfunction of the form $f_1 = u \cos(k\theta)$ where $u$ changes sign. However, such an eigenfunction cannot exist by the preceding argument. This finishes the proof of part (i).
\item By standard spectral theory, $L^2_{\mathrm w}(D)$ has an orthonormal basis consisting of Dirichlet eigenfunctions of $L$. Furthermore, the lowest eigenfunction~$g$ is unique, and positive in the interior of~$D$. Since it is unique, it must be rotationally invariant, i.e.\ $g \in C^\infty(0, \Sigma)$. Thus, the eigenvalue corresponding to~$g$ is $\mu_1$, and by the uniqueness of the lowest eigenvalue (i.e\ the fact that $\mu_1 < \mu_2$), we conclude that $g = cf$ for some constant $c$. The claimed properties then follow.
\end{enumerate}
\end{proof}
\subsection{Non-compact domains}
When $\Sigma$ is non-compact, we will analyze the spectrum of~$L$ by taking a exhaustion of $\Sigma$ by compact (rotationally symmetric) domains. The actual choice of domains will not matter, but for definiteness, we will define
\begin{equation*}
D_R = \Sigma \cap B_R^{\bbR^{n+1}}(0)\quad(R\geq 0)\text,
\end{equation*}
where $B_R^{\bbR^{n+1}}(0)$ denotes the ball of radius $R$ centered at the origin in $\bbR^{n+1}$.

Let $\mu_1(k, D_R)$ denote the lowest eigenvalue in the $k$-th restricted Dirichlet eigenvalue spectrum of~$L$ on the domain~$D_R$. From the characterization in part~(ii) of Theorem~\ref{compactspectraltheorem}, we see that $\mu_1(k, D_S) \leq \mu_1(k, D_R)$ whenever $S \geq R$. So we can make the following definition:
\begin{defn}Given a smooth immersed hypersurface~$\Sigma$, we define the \emph{bottom of the $k$-th restricted spectrum} of~$L$, denoted $\mu_1(k, \Sigma)$, by
\begin{equation*}
\mu_1(k, \Sigma) = \lim_{R \rightarrow \infty} \mu_1(k, D_R)\text.
\end{equation*}
We allow the value $\mu_1(k,\Sigma) = - \infty$.
\end{defn}
We can also take a limit of the corresponding eigenfunctions:
\begin{lemma}
\label{vsublimitlemma}
Let $\Sigma$ be a rotationally symmetric shrinker.
Fix a compact subset~$K'$ of~$\Sigma' \setminus \partial \Sigma'$, a point~$x' \in K'$, and an integer $k \geq 0$. For each $R > 0$, let $f_R$ denote (some multiple of) the eigenfunction of~$L$ corresponding to the eigenvalue $\mu_1(k, D_R)$. We write it as $f_R = v_R \cos(k \theta)$, where $v_R$ is a function on~$D_R'$ which is positive in the interior of $D_R'$.. (This is possible by Theorem~\ref{compactspectraltheorem}(iv).) We scale $f_R$ so that $v_R(x') = 1$.

Assume that $\mu_1(k, \Sigma) > -\infty$. Then the following hold:
\begin{enumerate}[(i)]
\item The bounds $0 < c \leq v_R \leq C$ hold inside $K'$, where the constants $c$ and $C$ do not depend on $R$ (but do depend on~$K'$).
\item We can find a sequence $R_i \rightarrow \infty$ such that $v_{R_i}\rvert_{K'}$ converges in $C^1(K')$.
\item Let $w_R = \log v_R$. Then $w_{R_i}$ also converges in~$C^1(K')$ to some $w_\infty \in C^1(K')$.
\end{enumerate}
\end{lemma}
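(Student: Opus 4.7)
The plan is to apply standard interior elliptic estimates for the reduced operator $L_k$ on the compact set $K'\subset\Sigma'\setminus\partial\Sigma'$. By Proposition~\ref{lkpropn}, the scalar function $v_R$ satisfies $L_k v_R + \mu_R v_R = 0$ on $D_R'$, where $\mu_R := \mu_1(k, D_R)$. Choose a compact set $K''\subset\Sigma'\setminus\partial\Sigma'$ containing $K'$ in its interior. On $K''$ we have $r \geq r_0 > 0$, so every coefficient of $L_k$---including the potentially singular term $k^2/r^2$---is smooth and bounded, and the operator is uniformly elliptic. By the monotonicity of Dirichlet eigenvalues noted in the definition of $\mu_1(k,\Sigma)$ together with the hypothesis $\mu_1(k,\Sigma) > -\infty$, the sequence $\{\mu_R\}$ is bounded. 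Hence, for all sufficiently large $R$ such that $K''\subset D_R'$, the function $v_R$ solves a uniformly elliptic second-order equation on $K''$ with coefficients and potential bounded independently of $R$.

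For part~(i), apply the Harnack inequality (e.g., Theorem~8.20 of~\cite{gilbargtrudinger}) to the positive solution $v_R$ on $K''$ with $K'$ compactly contained in its interior; this yields $\sup_{K'} v_R \leq C_H \inf_{K'} v_R$ with $C_H$ independent of $R$. Since $v_R(x')=1$ and $x'\in K'$, this gives the uniform bounds $0 < c \leq v_R \leq C$ on $K'$. For part~(ii), interior Schauder estimates applied on $K''$, together with the $L^\infty$ bound just obtained and the uniform coefficient bounds, produce a uniform $C^{1,\alpha}(K')$ bound on $v_R$. By the compact embedding $C^{1,\alpha}(K')\hookrightarrow C^1(K')$ and Arzelà-Ascoli, some subsequence $v_{R_i}$ converges in $C^1(K')$ to a limit $v_\infty$. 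For part~(iii), the uniform lower bound $v_{R_i} \geq c > 0$ passes to $v_\infty$, so $\log$ is smooth and uniformly Lipschitz on the range $[c,C]$; the chain rule (applied to both $v$ and $\nabla v$) then shows that $w_{R_i} = \log v_{R_i}$ converges in $C^1(K')$ to $\log v_\infty$.

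The only mildly subtle point is that the coefficient $k^2/r^2$ in $L_k$ degenerates at the symmetry axis $\partial\Sigma'$; restricting to a compact set $K'$ disjoint from $\partial\Sigma'$ sidesteps this entirely and reduces the argument to standard interior elliptic theory. Controlling the behavior of $v_R$ up to the axis (which will be needed elsewhere in the paper) is the genuinely hard task, but it lies outside the scope of this particular lemma.
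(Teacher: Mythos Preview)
Your proof is correct and follows essentially the same approach as the paper: Harnack (Theorem~8.20 in \cite{gilbargtrudinger}) for the two-sided bound in (i), interior $C^{1,\alpha}$ estimates plus Arzel\`a--Ascoli for (ii), and the uniform positive lower bound to push the $C^1$ convergence through $\log$ for (iii). The only cosmetic difference is that the paper introduces the enlarged compact set (your $K''$) only in step~(ii), whereas you set it up from the start.
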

\begin{proof}$ $
\begin{enumerate}[(i)]
\item $v_R$ satisfies the elliptic equation $L_k v_R + \mu_1(k, D_R) v_R = 0$. Since $\mu_1(k, \Sigma) > -\infty$, this equation has bounded coefficients, with the bound not depending on $R$. Since we normalized the solution to have $v_R(x') = 1$, a standard Harnack inequality (see e.g.\ Theorem~8.20 of~\cite{gilbargtrudinger}) now gives the estimate $0 < c \leq v_R \leq C$ in the compact domain $K'$. 
\item Let $E'$ be a slightly larger compact domain, such that $K'$ is contained in the interior of $E'$. Applying part (i) to the domain $E'$, we have $\abs{v_R} \leq C$ in $E'$, for some different constant $C$.

We can now apply a standard interior $C^{1, \alpha}$-estimate for elliptic equations, Theorem~8.32 of~\cite{gilbargtrudinger}, in the domain $E'$ to show that that the $C^{1,\alpha}$-norm of $v_R$ in $K'$ is bounded. 

Finally, the Arzel\`a-Ascoli theorem then gives a subsequence~$v_{R_i}$ which converges in the $C^1$-norm on $K'$.
\item This is an immediate consequence of parts (i)~and~(ii). Note that the lower bound $0 < c \leq v_R$ is needed to control the denominator of $\abs{\nabla \log u_R} = \abs{\nabla v_R}/\abs{w_R}$.
\end{enumerate}
\end{proof}
Before we state the main result of this section, we need another lemma, containing a calculation that will be helpful later:
\begin{lemma}$ $
\label{helpfulibp}
Let $\Sigma$ be a rotationally symmetric immersed hypersurface in $\bbR^{n+1}$.
\begin{enumerate}[(i)]
\item Suppose that $f, \phi \in C^\infty(\Sigma)$ and that $\phi$ has compact support. Then
\begin{equation*}
\left[\abs{\nabla(\phi f)}^2\right]_\Sigma = \left[\abs{\nabla \phi}^2 f^2 - \phi^2 f \mathcal L f\right]_\Sigma\text.
\end{equation*}
\item Suppose that $u, \phi \in C^\infty(\Sigma')$ (up to the boundary), and that $\phi$ has compact support. Then
\begin{equation*}
\left[\abs{\nabla(\phi u)}^2\right]'_{\Sigma'} = \left[\abs{\nabla \phi}^2 u^2 - \phi^2 u \mathcal L' u\right]'_{\Sigma'}\text.
\end{equation*}
\end{enumerate}
\end{lemma}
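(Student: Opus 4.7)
The proof is a standard Bochner/integration-by-parts trick, and the plan is the same for both parts. I would first expand the integrand via the product rule,
\begin{equation*}
\abs{\nabla(\phi f)}^2 = \abs{\nabla \phi}^2 f^2 + 2\phi f\, \nabla\phi \cdot \nabla f + \phi^2 \abs{\nabla f}^2,
\end{equation*}
and then observe that the last two terms can be recombined as a single divergence-like expression,
\begin{equation*}
2\phi f\, \nabla\phi \cdot \nabla f + \phi^2 \abs{\nabla f}^2 = \nabla(\phi^2 f) \cdot \nabla f.
\end{equation*}
Since $\phi$ is compactly supported and $f \in C^\infty(\Sigma)$, the function $\phi^2 f$ lies in $C^\infty_c(\Sigma)$, so Proposition~\ref{intbp} applies with $g = \phi^2 f$ and gives
\begin{equation*}
[\nabla(\phi^2 f) \cdot \nabla f]_\Sigma = [-\phi^2 f\, \mathcal{L} f]_\Sigma.
\end{equation*}
Substituting this back into the expansion of $\abs{\nabla(\phi f)}^2$ and integrating yields part~(i) directly.

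For part~(ii), the plan is identical, but with Proposition~\ref{intbpprime} replacing Proposition~\ref{intbp}. The only thing to check is that the hypothesis of that proposition is satisfied: namely, we need $\phi^2 u \in C^\infty_c(\Sigma')$ (where the space is allowed to be nonzero on $\partial \Sigma'$). This holds because $\phi$ is compactly supported and $u$ is smooth up to the boundary; the $r=0$ factor in the weight of $[\,\cdot\,]'_{\Sigma'}$ kills any boundary contribution at $\partial \Sigma' \subset \partial Q$, as was already used in the proof of Proposition~\ref{intbpprime}. I do not anticipate any obstacle beyond bookkeeping: the result is really just ``integration by parts on $\phi^2 f \cdot \mathcal{L} f$'' dressed up with the Leibniz rule, and the whole argument fits in a few lines once the identity $2\phi f\, \nabla\phi \cdot \nabla f + \phi^2|\nabla f|^2 = \nabla(\phi^2 f) \cdot \nabla f$ is noted.
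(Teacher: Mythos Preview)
Your proposal is correct and matches the paper's proof essentially line for line: expand $\abs{\nabla(\phi f)}^2$ by the Leibniz rule, recognize the cross terms as $\nabla(\phi^2 f)\cdot\nabla f$, and apply Proposition~\ref{intbp} (resp.\ Proposition~\ref{intbpprime}) to convert this into $-[\phi^2 f\,\mathcal L f]_\Sigma$ (resp.\ the primed version). The only cosmetic difference is that the paper writes the integration by parts first and then substitutes, whereas you substitute after recombining; there is no mathematical distinction.
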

\begin{proof}$ $
\begin{enumerate}[(i)]
\item Using the Leibniz rule and expanding, we get
\begin{equation}
\label{cutoffibpexp}
\left[\abs{\nabla(\phi f)}^2\right]_\Sigma
= \left[\abs{\nabla \phi}^2 f^2 + 2\phi f \nabla \phi \cdot \nabla f + \phi^2 \abs{\nabla f}^2\right]_\Sigma\text.
\end{equation}
By integrating by parts (see Proposition~\ref{intbp}), we have
\begin{align*}
-\left[\phi^2 f \mathcal L f\right]_\Sigma
&= \left[\nabla(\phi^2 f) \cdot \nabla f\right]_\Sigma \\
&= \left[2\phi f \nabla \phi \cdot \nabla f + \phi^2 \abs{\nabla f}^2\right]_\Sigma
\end{align*}
Substituting this into~\eqref{cutoffibpexp} gives the result. 
\item The proof follows the same steps as in case (i). First, we expand to get
\begin{equation*} 
\label{cutoffibpexpprime}
\left[\abs{\nabla(\phi u)}^2\right]'_{\Sigma'}
= \left[\abs{\nabla \phi}^2 u^2 + 2\phi u \nabla \phi \cdot \nabla u + \phi^2 \abs{\nabla u}^2\right]'_{\Sigma'}\text.
\end{equation*}
By Proposition~\ref{intbpprime}, we have
\begin{align*}
-\left[\phi^2 u \mathcal L' u\right]'_{\Sigma'}
&= \left[\nabla(\phi^2 u) \cdot \nabla u\right]'_{\Sigma'} \\
&= \left[2\phi u \nabla \phi \cdot \nabla u + \phi^2 \abs{\nabla u}^2\right]'_{\Sigma'}
\end{align*}
and substitution into~\eqref{cutoffibpexpprime} gives the result.
\end{enumerate}
\end{proof}
We now come to the main result of this section, which allows us to use global eigenfunctions to gain spectral information, even in the non-compact case.
\begin{thm}
\label{bottomofspectrumtest}
Let $\Sigma$ be a rotationally symmetric shrinker with polynomial volume growth, and suppose that $f \in C^\infty(k, \Sigma)$ satisfies $Lf + \lambda f = 0$. Assume also that $f$ has polynomial growth, i.e.\ that $\abs{f} \leq p(\abs{x})$ for some polynomial~$p$.
\begin{enumerate}[(i)]
\item Then $\mu_1(k, \Sigma) \leq \lambda$.
\item We can write $f(x) = u(x')\cos(k\theta)$. If, in addition, $u$ changes sign in $\Sigma' \setminus \partial \Sigma'$, then $\mu_1(k, \Sigma) < \lambda$.
\end{enumerate}
\end{thm}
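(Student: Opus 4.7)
For part~(i), I will construct a compactly supported test function from $f$ itself. Pick a smooth rotationally invariant cutoff $\phi_R$ with $\phi_R \equiv 1$ on $D_R$, $\supp \phi_R \subset D_{2R}$, and $\abs{\nabla \phi_R} \le C/R$. Then $g_R = \phi_R f$ lies in $C^\infty_c(k, D_{2R}) \subset H^1_{0,\mathrm w}(k, D_{2R})$, so Theorem~\ref{compactspectraltheorem}(ii) applied to $D_{2R}$ gives $\mu_1(k, D_{2R}) \le B_L(g_R, g_R)/[g_R^2]_{D_{2R}}$. Lemma~\ref{helpfulibp}(i), together with the identity $\mathcal L f = -\lambda f - (\abs{A}^2 + \tfrac12) f$ derived from $Lf + \lambda f = 0$, reduces this Rayleigh quotient to $\lambda + [\abs{\nabla\phi_R}^2 f^2]_\Sigma / [\phi_R^2 f^2]_\Sigma$. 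Polynomial growth of $f$, polynomial volume growth of $\Sigma$, and the Gaussian weight together imply $f \in L^2_{\mathrm w}(\Sigma)$ and $[\abs{\nabla \phi_R}^2 f^2]_\Sigma \to 0$ as $R \to \infty$, so $\mu_1(k, \Sigma) \le \lambda$.

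For part~(ii), I will argue by contradiction: suppose $\mu_1(k, \Sigma) = \lambda$. Lemma~\ref{vsublimitlemma} applied along a compact exhaustion of $\Sigma' \setminus \partial \Sigma'$, with a diagonal extraction, produces a positive limit $v_\infty > 0$ on $\Sigma' \setminus \partial \Sigma'$; standard elliptic bootstrap upgrades the $C^1_{\mathrm{loc}}$-convergence, and $v_\infty$ is a smooth classical solution of $L_k v_\infty + \lambda v_\infty = 0$. With two solutions in hand --- the positive $v_\infty$ and the sign-changing $u$ --- I will invoke the Picone identity
\begin{equation*}
\abs{\nabla(\phi_R u) - (\phi_R u / v_\infty) \nabla v_\infty}^2 = \abs{\nabla(\phi_R u)}^2 - \nabla(\phi_R^2 u^2 / v_\infty) \cdot \nabla v_\infty.
\end{equation*}
Integrating against the $[\cdot]'_{\Sigma'}$-measure, Lemma~\ref{helpfulibp}(ii) handles the first term on the right and Proposition~\ref{intbpprime} (applied to $\phi_R^2 u^2/v_\infty$ and $v_\infty$) handles the second; the $\partial \Sigma'$-boundary contributions drop out because of the factor of $r$ in the weight. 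The common equations $\mathcal L' u = -(c + \lambda) u$ and $\mathcal L' v_\infty = -(c + \lambda) v_\infty$ (with $c$ the zeroth-order coefficient of $L_k$) make all the bulk terms cancel, leaving
\begin{equation*}
[\abs{\nabla \phi_R}^2 u^2]'_{\Sigma'} = [\abs{\nabla(\phi_R u) - (\phi_R u / v_\infty) \nabla v_\infty}^2]'_{\Sigma'}.
\end{equation*}
Expanding the right side and applying Cauchy-Schwarz to the cross term yields $[\phi_R^2 \abs{\nabla u - (u/v_\infty) \nabla v_\infty}^2]'_{\Sigma'} \le 4 [u^2 \abs{\nabla \phi_R}^2]'_{\Sigma'}$, whose right side tends to zero by the same polynomial/Gaussian estimate as in~(i). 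Fatou's lemma then forces $\nabla(u/v_\infty) \equiv 0$ on $\Sigma' \setminus \partial \Sigma'$; by connectedness of this set (Lemma~\ref{sigmaprimeprops}), $u = C v_\infty$ for a constant $C$, contradicting the sign change of $u$.

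The main obstacle is the cancellation in the Picone computation together with the handling of $\partial \Sigma'$: when $k \ge 1$, Lemma~\ref{specialbehavior}(ii) forces both $u$ and $v_\infty$ to vanish on $\partial \Sigma'$, so $u/v_\infty$ is pointwise indeterminate there; however, the factor of $r$ in the $[\cdot]'$-weight makes every $\partial \Sigma'$-boundary contribution vanish automatically in the integration by parts, so the argument goes through in the integrated sense regardless of the pointwise behaviour of the ratio at the axis. A secondary technicality is upgrading Lemma~\ref{vsublimitlemma}'s $C^1_{\mathrm{loc}}$-convergence of the $v_R$ to a globally defined smooth $v_\infty$ solving the limiting equation, which is routine elliptic bootstrap on compact subsets combined with a diagonal extraction over an exhaustion.
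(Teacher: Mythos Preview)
Your argument for part~(i) is correct and essentially identical to the paper's.

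For part~(ii) your overall strategy --- a Picone/log-derivative comparison between the sign-changing solution $u$ and a positive solution --- is the same as the paper's, but there is a genuine gap in how you treat the axis $\partial\Sigma'$. You invoke Proposition~\ref{intbpprime} with the pair $(\phi_R^2 u^2/v_\infty,\, v_\infty)$, but that proposition is stated and proved only for functions in $C^\infty_c(\Sigma')$, i.e.\ smooth up to $\partial\Sigma'$. Your $v_\infty$ is produced by Lemma~\ref{vsublimitlemma} only on compact subsets of $\Sigma'\setminus\partial\Sigma'$; nothing you have said shows that it extends smoothly (or even continuously) to $\partial\Sigma'$, and for $k\ge 1$ any such extension would have to vanish there, so $\phi_R^2 u^2/v_\infty$ is not known to be smooth --- or even bounded --- near the axis. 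The sentence ``the factor of $r$ in the weight makes every $\partial\Sigma'$-boundary contribution vanish automatically'' confuses two different issues: the factor of $r$ kills the boundary integral in the divergence theorem \emph{provided the divergence theorem applies}, but it does not by itself justify applying the divergence theorem to an integrand that may blow up at $\partial\Sigma'$. The same objection hits the identity $\mathcal L' v_\infty = -(c+\lambda)v_\infty$: the coefficient $c$ contains $k^2/r^2$, so this equation is singular at the axis and you cannot simply integrate it against $\phi_R^2 u^2/v_\infty$ over all of $\Sigma'$ without further justification.

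The paper avoids this problem in two ways that you should compare with your plan. First, it works with $w_R=\log v_R$ for \emph{finite} $R$ and only passes to a subsequential limit (via Fatou) on a fixed compact $K'\subset\Sigma'\setminus\partial\Sigma'$ at the very end; it never needs a globally defined $v_\infty$ solving the limit equation. Second, and more to the point, for $k\ge1$ it introduces a second cutoff $\psi_\rho$ supported away from $\{r<\rho\}$, carries out the integration by parts on the region where everything is smooth and $v_R>0$, and then sends $\rho\to 0^+$; the crucial estimate $[\,|\nabla\psi_\rho|^2 u^2\,]'_{D_R'}\to 0$ uses Lemma~\ref{specialbehavior}(ii) (that $u\to0$ at the axis) together with the bound $[1]'_{\{r<2\rho\}}=O(\rho^2)$. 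For $k=0$ this cutoff is unnecessary because Theorem~\ref{compactspectraltheorem}(v) gives $v_R>0$ up to and including $\partial\Sigma'$, so $w_R$ is genuinely smooth on all of $D_R'$. Your proposal collapses both cases into a single hand-wave at the axis, and that is precisely the step that needs the work.
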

\begin{proof}$ $
\begin{enumerate}[(i)]
\item Let $\phi_R \in C^\infty(\Sigma)$ be a cut-off function which depends only on~$\abs{x}$, takes the value $0$ outside of $D_R$, takes the value~1 inside~$D_{R/2}$, and which is approximately linear in~$\abs{x}$  in between. We will use $\phi_R f$ as a test function in the definition of~$\mu_1(k, D_R)$.

 Using Lemma~\ref{helpfulibp}(i), we have
\begin{equation*}
\left[\abs{\nabla(\phi_R f)}^2\right]_\Sigma
= \left[\abs{\nabla \phi_R}^2 f^2 - \phi_R^2 f \mathcal L f\right]_\Sigma\text.
\end{equation*}
This means that
\begin{align*}
[-\phi_R f L( \phi_R f)]_\Sigma &=
\left[\abs{\nabla(\phi_R f)}^2 - \left(\abs A^2 + \frac 12\right) \phi_R^2 f^2 \right]_\Sigma \\
&= \left[\abs{\nabla \phi_R}^2 f^2 - \phi_R^2 f \mathcal L f - \left(\abs{A}^2 + \frac 12 \right)\phi_R^2 f^2\right]_\Sigma \\
&= \left[\abs{\nabla \phi_R}^2 f^2 - \phi_R^2 f L f\right]_\Sigma \\
&= \left[\abs{\nabla \phi_R}^2 f^2\right]_\Sigma + \lambda [\phi_R^2 f^2]_\Sigma\text.
\end{align*}
We now take the limit $R \rightarrow \infty$. Note that the term $[\abs{\nabla \phi_R}^2 f^2]_\Sigma$ is a weighted integral with weight~$e^{-\abs{x}^2/4}$ and integrand supported only in~$\Sigma \setminus D_R$. From the polynomial growth of~$f$, the fact that $\abs{\nabla \phi_R} \leq C/R$ for some constant~$C$, and the fact that $\Sigma$ has polynomial volume growth, it follows that this term vanishes in the limit. In other words,
\begin{equation*}
[-\phi_R fL(\phi_R f)]_\Sigma - \lambda [\phi_R^2 f^2]_\Sigma \rightarrow 0
\end{equation*}
as $R \rightarrow \infty$. Since $[\phi_R^2 f^2]_\Sigma$ is increasing in~$R$, we can divide through to obtain
\begin{equation*}
\lim_{R \rightarrow \infty} \frac{[-\phi_R fL(\phi_R f)]_\Sigma}{[\phi_R^2 f^2]_\Sigma} = \lambda\text.
\end{equation*}
For any fixed large~$R$, $\phi_R f$ is a member of $C^\infty_0(k, D_S)$ for $S$ sufficiently large. Then the definition of $\mu_1(k,D_S)$ gives $\mu_1(k, D_S) \leq \frac{[-\phi_R fL(\phi_R f)]_\Sigma}{\norm{\phi_R f}_\Sigma^2}$, and it follows that $\mu_1(k, \Sigma) \leq \lambda$.
\item Assume, for contradiction, that $\lambda = \mu_1(k, \Sigma)$. Let $g_R$ denote the lowest eigenfunction for $L$ in the $k$-th restricted spectrum, i.e.\ corresponding to the eigenvalue~$\mu_1(k, D_R)$. By Theorem~\ref{compactspectraltheorem}(iv), we can write $g_R(x) = v_R(x') \cos(k\theta)$, where $v_R > 0$ in the interior of $D_R' \setminus \{r=0\}$. We can then define $w_R:D_R' \setminus \{r=0\} \rightarrow \bbR$ by $w_R(x') = \log v_R(x')$. A simple calculation shows that 
\begin{equation}
\label{lprimewr}
\mathcal L' w_R = -\mu_1(k, D_R) - c - \abs{\nabla w_R}^2\text,
\end{equation}
where the function~$c$ is the degree-zero coefficient of~$L_k$: $c = \abs{A}^2 + \abs{e_r^\perp}^2/r^2 + \frac 12 - k^2/r^2$.

We split into two cases.
\begin{description}
\item[Case 1: $k \geq 1$.]
Assume first that $k \geq 1$. Note that $L_k u + \lambda u = 0$. 

We will use two different cut-off functions, one to change $u$ to have compact support, and one to move the support away from the boundary $\partial \Sigma'$:
\begin{itemize}
\item $\phi_R \in C^\infty(\Sigma')$ has compact support inside $D_R'$, takes the value 1 inside $D_{R/2}'$, and is approximately linear in $\abs{x'}$ in between.
\item $\psi_\rho \in C^\infty(\Sigma')$ takes the value $0$ in the region $\Sigma' \cap \{r < \rho\}$, takes the value $1$ in the region $\Sigma' \cap \{r > 2\rho\}$, and is approximately linear in $r$ in between.
\end{itemize}
We can choose these functions so that $\abs{\nabla \phi_R} \leq C/R$, and $\abs{\nabla \psi_\rho} \leq C/\rho$, for some universal constant $C$.

Now define $u_\rho^R = \phi_R \psi_\rho u$. We will compute the integral
\begin{equation}
\label{irhordefn}
I^R_\rho = \left[\abs{\nabla u_\rho^R - u_\rho^R \nabla w_R}^2\right]'_{D_R'}\text.
\end{equation}
By expanding, we get
\begin{align*}
I^R_\rho &= \left[ \abs{\nabla u_\rho^R}^2\right]'_{D_R'} - \left[2u_\rho^R \nabla u_\rho^R \cdot \nabla w_R\right]'_{D_R'} + \left[(u_\rho^R)^2 \abs{\nabla w_R}^2\right]'_{D_R'} \\
&\equalscolon A + B + C
\end{align*}
where $A$, $B$, and $C$ are defined by this last equality.

By Lemma \ref{helpfulibp}(ii), we have
\begin{align*}
A &= \left[\abs{\nabla u_\rho^R}^2\right]'_{D_R'} \\
&= \left[\abs{\nabla (\psi_\rho \phi_R)}^2 u^2 - \psi_\rho^2 \phi_R^2 u \mathcal L' u\right]'_{D_R'}\text.
\end{align*}

By using the divergence theorem (i.e.\ Proposition~\ref{intbpprime}), we have
\begin{align*}
B &= \left[-\nabla(u_\rho^R)^2 \cdot \nabla w_R\right]'_{D_R'} \\
&= \left[(u_\rho^R)^2 \mathcal L' w_R \right]'_{D_R'}\text.
\end{align*}
Combining this with \eqref{lprimewr}, we get
\begin{equation*}
B + C = \left[ (u_\rho^R)^2 \left(-\mu_1(k, D_R) - c\right)\right]'_{D_R'}
%B + C = \int_{D_R'} (u_\rho^R)^2 \left(-\mu_1(k, D_R) - c\right)r e^{-\abs{x'}^2/4}
\end{equation*}
and finally
\begin{align*}
I_\rho^R &= A + B + C  \\
&= \left[\abs{\nabla (\psi_\rho \phi_R)}^2 u^2 - \psi_\rho^2 \phi_R^2 u\left(L_k u + \mu_1(k, D_R) u\right)\right]'_{D_R'} \\
&= \left[\left(\abs{\nabla(\psi_\rho \phi_R)}^2 + \psi_\rho^2 \phi_R^2 (\lambda - \mu_1(k, D_R)) \right) u^2 \right]'_{D_R'}\text.
\end{align*}
Assume furthermore that $2\rho < R/2$, so that $\nabla \psi_\rho \cdot \nabla \phi_R \equiv 0$. We get
\begin{equation}
\label{beforerholimit}
I_\rho^R = \left[\left(\abs{\nabla \psi_\rho}^2 + \abs{\nabla \phi_R}^2 + \psi_\rho^2 \phi_R^2(\lambda - \mu_1(k, D_R))\right)u^2\right]'_{D'_R}\text.
\end{equation}
We will first take the limit $\rho \rightarrow 0^+$. First of all, I claim that
\begin{equation}
\label{rholimit}
\lim_{\rho \rightarrow 0^+} \left[ \abs{\nabla \psi_\rho}^2 u^2\right]'_{D_R'} = 0\text.
\end{equation}
To see this, note that for $R$ fixed, we have
\begin{equation*}
[1]_{\{x' \in D_R' \mid r < 2\rho\}}' = O(\rho^2)
\end{equation*}
as $\rho \rightarrow 0^+$. Since $\nabla \psi_\rho$ is only supported in $\{r < 2 \rho\}$, and $\abs{\nabla \psi_\rho} \leq C/\rho$, we get 
\begin{equation*}
\left[\abs{\nabla\psi_\rho}^2\right]'_{D_R'} \leq C'
\end{equation*}
for some constant~$C'$ that depends on~$R$ but not on~$\rho$. Furthermore, $u\rightarrow 0$ as $r \rightarrow 0$ by Lemma~\ref{specialbehavior}(ii), and since $D_R'$ is compact, this convergence is uniform inside $D_R'$. Now \eqref{rholimit} follows.

Combining \eqref{beforerholimit} and \eqref{rholimit}, we get
\begin{equation*}
\lim_{\rho\rightarrow 0^+} I_\rho^R =
\lim_{\rho \rightarrow 0^+} \left[\left(\abs{\nabla \phi_R}^2 + \psi_\rho^2 \phi_R^2(\lambda - \mu_1(k, D_R))\right)u^2\right]'_{D'_R}\text,
\end{equation*}
and the dominated convergence theorem gives 
\begin{equation}
\label{irhorlimit}
\lim_{\rho \rightarrow 0^+} I_\rho^R 
= \left[\left(\abs{\nabla \phi_R}^2 + \phi_R^2 (\lambda - \mu_1(k, D_R))\right)u^2\right]'_{D_R'}\text.
%r e^{-\abs{x'}^2/4}\text.
%= \int_{D_R'} \left(\abs{\nabla \phi_R}^2 + \phi_R^2 (\lambda - \mu_1(k, D_R))\right)u^2
%r e^{-\abs{x'}^2/4}\text.
\end{equation}

On the other hand, we can let $u^R = \phi_R u$, and define
\begin{equation*}
I^R =  \left[\abs{\nabla u^R - u^R \nabla w_R}^2\right]'_{D_R'}\text.
%I^R =  \int_{D_R'} \abs{\nabla u^R - u^R \nabla w_R}^2 r e^{-\abs{x'}^2/4}\text.
\end{equation*}
By taking the limit $\rho \rightarrow 0^+$ directly in the definition \eqref{irhordefn} of $I_\rho^R$, Fatou's lemma gives $I^R \leq \liminf_{\rho \rightarrow 0^+} I_\rho^R$, and combining with \eqref{irhorlimit} gives
\begin{equation}
\label{irinequality}
I^R \leq \left[\left(\abs{\nabla \phi_R}^2 + \phi_R^2 (\lambda - \mu_1(k, D_R))\right)u^2
\right]'_{D_R'}\text.
\end{equation}
Now we take the limit $R \rightarrow \infty$. This is simpler: $\mu_1(k, D_R) \rightarrow \lambda$ by assumption, and $\left[\abs{\nabla \phi_R}^2 u^2\right]'_{D_R'} \rightarrow 0$ by the bounds on $\abs{\nabla \phi_R}$ and because $u$ has polynomial growth. We thus get
$\limsup_{R \rightarrow \infty} I^R \leq 0$.

On the other hand, we can take the limit directly in the definition of~$I^R$. Let $K'$ be a fixed connected compact subset of $\Sigma' \setminus \partial \Sigma'$. By Lemma~\ref{vsublimitlemma}(iii), we can find radii $R_i \rightarrow \infty$ and a function~$w_\infty$ such that $w_{R_i} \rightarrow w_\infty$ in $C^1(K')$. Fatou's lemma then gives
\begin{equation*}
\left[\abs{\nabla u - u \nabla w_\infty}^2\right]'_{K'} 
\leq \liminf_{i \rightarrow \infty} \left[\abs{\nabla u^{R_i} - u^{R_i} \nabla w_{R_i}}^2\right]'_{K'}\text.
\end{equation*}
For $R$ sufficiently large, $K' \subseteq D_R$, and so
\begin{equation*}
\left[\abs{\nabla u - u \nabla w_\infty}^2 \right]'_{K'}
\leq \liminf_{R \rightarrow \infty} I^R\text.
\end{equation*}
Combining with $\limsup_{R\rightarrow\infty} I^R \leq 0$, we conclude that
\begin{equation*}
\left[\abs{\nabla u - u \nabla w_\infty}^2\right]'_{K'}\text.
\end{equation*}
This means that $\nabla u = u \nabla w_\infty$ everywhere in~$K'$. Since $K'$ is connected, it follows that $u = D e^{w_\infty}$ for some constance $D$, and in particular, $u$ does not change sign on $K'$. Since $K'$ is an arbitrary connected compact subset of the interior of $\Sigma'$, and $\Sigma'$ is connected, this contradicts the assumption that $u$ changes sign, and finishes the proof.
 \item[Case 2: $k=0$.] In this case, we do not have the result that $u \rightarrow 0$ near the boundary $\partial \Sigma'$. On the other hand, Theorem~\ref{compactspectraltheorem}(v) ensures that $w_R$ can be defined on the boundary~$\partial \Sigma'$, which removes the need for the cutoff function~$\psi_\rho$ altogether, and we end up with a simpler proof.
 
 Let $u^R = \phi_R u$, and consider the integral
 \begin{equation*}
I^R \colonequals \left[\abs{\nabla u^R - u^R \nabla w_R}^2\right]'_{D_R'}\text.
%I^R \colonequals \int_{D_R'} \abs{\nabla u^R - u^R \nabla w_R}^2 r e^{-\abs{x'}^2/4}
\end{equation*}
Expanding this, we get
 \begin{align*}
  I^R &= \left[\abs{\nabla u^R}^2\right]'_{D_R'} - \left[2 u^R \nabla u^R \cdot \nabla w_R\right]'_{D_R'} + \left[(u^R)^2 \abs{\nabla w_R}^2\right]'_{D_R'} \\
  &\equalscolon A + B + C\text,
 \end{align*}
 where the last equality defines the terms $A$, $B$, and $C$.
 
 We use Lemma \ref{helpfulibp}(ii) for term $A$:
 \begin{equation*}
  A = \left[\abs{\nabla \phi_R}^2 u^2 - \phi_R^2 u \mathcal L' u\right]'_{D_R'}\text.
% A = \int_{D_R'} \left(\abs{\nabla \phi_R}^2 u^2 - \phi_R^2 u \mathcal L' u\right) r e^{-\abs{x'}^2/4}
 \end{equation*}
 Because $k=0$, Theorem~\ref{compactspectraltheorem}(v) guarantees that $w_R$ is smooth up to the boundary $\partial \Sigma'$. We can apply Proposition~\ref{intbpprime}, to get
 \begin{align*}
 B &= - \left[\nabla (u^R)^2 \cdot \nabla w_R\right]'_{D_R'}\\
 &= \left[ (u^R)^2 \mathcal L' w_R \right]'_{D_R'}\text.
 \end{align*}
 Equation~\eqref{lprimewr} then gives
 \begin{equation*}
 B + C = \left[ (u^R)^2 \left(-\mu_1(k, D_R) - c\right)\right]'_{\Sigma'}
% B + C = \int_{D_R'} (u^R)^2 \left(-\mu_1(k, D_R) - c\right) r e^{-\abs{x'}^2/4}
 \end{equation*}
 and finally
 \begin{align*}
 I^R &= A + B + C \\
 &=
 \left[\abs{\nabla \phi_R}^2 u^2 - \phi_R^2 u\left(L_k u + \mu_1(k, D_R) u\right)\right]'_{D_R'} \\
 &= \left[\left(\abs{\nabla \phi_R}^2 + \phi_R^2 (\lambda - \mu_1(k, D_R))\right)u^2 \right]'_{D_R'} \text.
 \end{align*}
 This is now the same expression as \eqref{irinequality} (except with equality), and we finish the proof in exactly the same way as in Case 1.
\end{description}
\end{enumerate}
\end{proof}
\section{$F$-unstable variations}
\subsection{Almost orthogonality of spacetime translations}
The proof of our main result, Theorem~\ref{maintheorem}, will depend on the spectral theory developed in the previous section, combined with an analysis of the two classes of eigenfunctions of $L$ we already know, namely $H$ and $y \cdot \mathbf n$ from Proposition~\ref{knowneigenfunctions}.

Throughout this section, $\Sigma$ will be an immersed shrinker of polynomial volume growth, and like in the previous section, $D_R$ will be used to denote the subdomain $\Sigma \cap B_R^{\bbR^{n+1}}(0)$.

First of all, we show that the eigenfunctions from Proposition~\ref{knowneigenfunctions} are almost orthogonal:
\begin{lemma}\label{timespaceorthogonal}Suppose that the rotationally symmetric shrinker~$\Sigma$, with polynomial volume growth, satisfies $\mu_1(0, \Sigma) > -\infty$. Let $y \in \bbR^{n+1}$ be a constant vector. Then
\begin{equation*}
[H(y \cdot \mathbf n)]_{D_R} \rightarrow 0
\end{equation*}
as $R \rightarrow \infty$.
\end{lemma}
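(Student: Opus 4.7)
The plan is to use the formal self-adjointness of $L$ together with Proposition~\ref{knowneigenfunctions}, which gives $LH=H$ and $L(y\cdot\mathbf n)=\tfrac12(y\cdot\mathbf n)$. Formally,
\[
[H(y\cdot\mathbf n)]_\Sigma = 2[H\,L(y\cdot\mathbf n)]_\Sigma = 2[(y\cdot\mathbf n)\,LH]_\Sigma = 2[H(y\cdot\mathbf n)]_\Sigma,
\]
forcing $[H(y\cdot\mathbf n)]_\Sigma=0$. The work is in justifying the self-adjointness step on non-compact $\Sigma$ via cutoffs, and controlling the resulting error terms; the spectral hypothesis $\mu_1(0,\Sigma)>-\infty$ enters only to provide an $L^2_{\mathrm w}$-bound on $|A|$.

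First I would check integrability. Polynomial volume growth makes $e^{-|x|^2/4}$ integrate any polynomial in~$|x|$; combined with the shrinker identity $|H|\le|x|/2$ and $|y\cdot\mathbf n|\le|y|$, this gives $H, y\cdot\mathbf n\in L^2_{\mathrm w}(\Sigma)$ and $H(y\cdot\mathbf n)\in L^1_{\mathrm w}(\Sigma)$, so by dominated convergence $[H(y\cdot\mathbf n)]_{D_R}\to[H(y\cdot\mathbf n)]_\Sigma$ and the lemma reduces to showing the latter vanishes. Next, I would extract $|A|\in L^2_{\mathrm w}(\Sigma)$ from the hypothesis $\mu_1(0,\Sigma)>-\infty$: for any radial cutoff $\phi_R\in C^\infty_c(0,\Sigma)$, the variational characterization underlying the definition of $\mu_1(0,\Sigma)$ yields
\[
[|A|^2\phi_R^2]_\Sigma \le [|\nabla\phi_R|^2]_\Sigma + \bigl(-\mu_1(0,\Sigma)-\tfrac12\bigr)[\phi_R^2]_\Sigma,
\]
and with a standard choice satisfying $|\nabla\phi_R|\le C/R$ the right-hand side stays uniformly bounded, so monotone convergence gives $|A|\in L^2_{\mathrm w}(\Sigma)$.

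The core calculation then goes as follows. Fix $\phi_R$ with $\phi_R\equiv 1$ on $D_{R/2}$, $\phi_R\equiv 0$ off $D_R$, $|\nabla\phi_R|\le C/R$. Since $\phi_R^2 H$ and $\phi_R^2(y\cdot\mathbf n)$ are compactly supported and the pointwise $(|A|^2+\tfrac12)$-part of $L$ cancels between the two terms, Proposition~\ref{intbp} applied to $\mathcal L$ yields
\[
[\phi_R^2 H\,L(y\cdot\mathbf n)]_\Sigma - [\phi_R^2(y\cdot\mathbf n)\,LH]_\Sigma = -2\bigl[\phi_R\,\nabla\phi_R\cdot\bigl(H\nabla(y\cdot\mathbf n)-(y\cdot\mathbf n)\nabla H\bigr)\bigr]_\Sigma,
\]
whose left-hand side collapses via the eigenvalue relations to $-\tfrac12[\phi_R^2 H(y\cdot\mathbf n)]_\Sigma$. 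Using the standard shrinker computations $\nabla H=\tfrac12 A(x^T,\cdot)$ (from $H=-\tfrac12 x\cdot\mathbf n$) and $\nabla(y\cdot\mathbf n)=-A(y^T,\cdot)$ (from Weingarten), the integrand on the right is pointwise bounded by $(C|y|/R)\,|A|\,|x|$ on the annulus $D_R\setminus D_{R/2}$, so Cauchy--Schwarz bounds the whole expression by
\[
\frac{C|y|}{R}\,\|A\|_{L^2_{\mathrm w}(D_R\setminus D_{R/2})}\Bigl(\int_{D_R\setminus D_{R/2}}|x|^2 e^{-|x|^2/4}\,d\vol\Bigr)^{1/2}.
\]
The second factor stays bounded, while the first tends to zero by $|A|\in L^2_{\mathrm w}(\Sigma)$. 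Sending $R\to\infty$ gives $[H(y\cdot\mathbf n)]_\Sigma=0$, which by the first step is equivalent to the claim.

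The main obstacle is the $L^2_{\mathrm w}$-bound on $|A|$: this is the only place the spectral hypothesis $\mu_1(0,\Sigma)>-\infty$ is used, and without it one could not close the Cauchy--Schwarz estimate on the cross-term. Everything else is a routine cutoff/self-adjointness computation.
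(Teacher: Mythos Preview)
Your proposal is correct and follows essentially the same route as the paper: both exploit the eigenvalue relations $LH=H$, $L(y\cdot\mathbf n)=\tfrac12 y\cdot\mathbf n$ together with the self-adjointness of $\mathcal L$ against a cutoff, reducing matters to a cross-term supported on an annulus that is controlled via $|\nabla H|,|\nabla(y\cdot\mathbf n)|\le C|A||x|$ and Cauchy--Schwarz. The only cosmetic difference is that you derive $|A|\in L^2_{\mathrm w}(\Sigma)$ directly from the variational characterization of $\mu_1(0,\Sigma)$, whereas the paper invokes the stronger bound $[|A|^2|x|^2]_\Sigma<\infty$ from \cite{cm}; your version is slightly more self-contained.
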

\begin{proof}
Let $\phi$ be a cutoff function which takes the values $1$ inside~$D_R$ and $0$ outside~$D_{R+1}$, and which is approximately linear in~$\abs{x}$ in between. We can choose $\phi$ such that $\abs{\nabla \phi} \leq C$ for some universal constant~$C$.

Using Proposition~\ref{knowneigenfunctions}, we have
\begin{align*}
\frac 12 \left[\phi H y \cdot \mathbf n\right]_\Sigma 
&= \left[ \phi \left(H L (y \cdot \mathbf n) - LH y \cdot \mathbf n\right)\right]_\Sigma \\
&= \left[\phi \left( H \mathcal L (y \cdot \mathbf n) - \mathcal L H y \cdot \mathbf n\right) \right]_\Sigma \\
&= [\nabla(\phi y \cdot \mathbf n) \cdot \nabla H - \nabla(\phi H) \cdot \nabla (y \cdot \mathbf n)]_\Sigma \\
&= \left[\nabla \phi \cdot \left(\nabla H y \cdot \mathbf n - H\nabla (y \cdot \mathbf n)\right)\right]_\Sigma\text.
\end{align*}
By the shrinker equation, we have $H = \frac 12 x \cdot \mathbf n$. Therefore $\nabla_X H = \frac 12 A(x^T, X)$, and so $\abs{\nabla H} \leq \abs{A}\abs{x}$. Similarly, $\abs{\nabla(y \cdot \mathbf n)} \leq \abs{A}{\abs y}$. We get
\begin{equation*}
\frac 12 \abs{ \left[ \phi H y \cdot \mathbf n\right]_\Sigma} \leq 2\left[ \abs{\nabla \phi} 
\abs{A} \abs{x} \abs{y} \right]_\Sigma\text.
\end{equation*}
Applying Cauchy-Schwarz, we get
\begin{equation*}
\abs{\left[\phi H y \cdot \mathbf n \right]_\Sigma}  \leq C \abs{y} \left[\abs{A}^2\abs{x}^2\right]_\Sigma \left[1\right]_{D_{R+1} \setminus D_R}\text.
\end{equation*}
Since $\mu_1(0, \Sigma) > -\infty$, we know that $\left[\abs{A}^2\abs{x}^2\right]_{\Sigma} < \infty$ from \cite{cm} (see Step~1 in the proof of Theorem~9.36, and in particular formula (9.39)). By polynomial volume growth, we have that
$\left[1\right]_{D_{R+1} \setminus D_R} \leq \vol(D_{R+1}) e^{-R^2/4} \rightarrow 0$, and it follows that
\begin{equation*}
\left[\phi H y \cdot \mathbf n\right]_\Sigma \rightarrow 0\text.
\end{equation*}
On the other hand, we have
\begin{equation*}
\abs{\left[ \phi H y \cdot \mathbf n \right]_{\Sigma \setminus D_R}}
\leq \left[\abs{x}\abs{y}\right]_{D_{R+1} \setminus D_R} \leq \vol(D_{R+1}) (R+1) \abs{y} e^{-R^2/4}
\end{equation*}
which tends to zero by polynomial volume growth. Since $[H(y \cdot \mathbf n)]_{D_R} = \left[\phi H y \cdot n\right]_\Sigma - \left[\phi H y \cdot \mathbf n\right]_{\Sigma \setminus D_R}$, the result follows.
\end{proof}
In fact, we need a slightly stronger result, showing that Lemma~\ref{timespaceorthogonal} holds as a uniform limit in a certain sense:
\begin{lemma}
\label{timespaceuniform}
Suppose that the rotationally symmetric shrinker~$\Sigma$, with polynomial volume growth, satisfies $\mu_1(0, \Sigma) > -\infty$. Let $\in \bbR^{n+1}$ be a constant vector and let $\epsilon > 0$ be arbitrary. Then there exists an $R_0$, depending on $\epsilon$ and $\Sigma$ but not on $y$, such that 
\begin{equation}
\label{timespaceuniformresult}
\left[H(y\cdot \mathbf n)\right]_{D_R} \leq \epsilon \left[H^2\right]^{1/2}_{D_R} \left[(y \cdot \mathbf n)^2\right]^{1/2}_{D_R}
\end{equation}
holds whenever $R \geq R_0$.
\end{lemma}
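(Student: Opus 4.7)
The plan is to strengthen the proof of Lemma~\ref{timespaceorthogonal} to obtain uniform-in-$y$ decay of the numerator on the left-hand side of~\eqref{timespaceuniformresult}, and then to combine this with uniform-in-$y$ lower bounds on the two factors of the denominator. By homogeneity, both sides of~\eqref{timespaceuniformresult} scale linearly in $\abs y$, so I would first reduce to the case $\abs y = 1$. One may also assume $H \not\equiv 0$, since otherwise the inequality holds trivially with $R_0 = 0$.

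The numerator decay is extracted directly from the proof of Lemma~\ref{timespaceorthogonal} by tracking the $y$-dependence carefully: every estimate there is linear in $\abs y$, and the only $R$-dependent factors, $[1]_{D_{R+1} \setminus D_R}^{1/2}$ (from a Cauchy--Schwarz applied to $\abs{\nabla\phi}\abs A\abs x$) and $\vol(D_{R+1})(R+1)e^{-R^2/4}$ (from the boundary tail), depend only on $\Sigma$. Hence for all $\abs y = 1$ we obtain an estimate
\begin{equation*}
\bigl\lvert [H(y \cdot \mathbf n)]_{D_R} \bigr\rvert \leq \psi(R)
\end{equation*}
where $\psi(R) \to 0$ depends only on $\Sigma$.

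For the denominator, the subtlety is that the quadratic form $q(y) \colonequals [(y\cdot\mathbf n)^2]_\Sigma$ on $\bbR^{n+1}$ may degenerate (for instance along translational symmetry directions of $\Sigma$). Let $K \subseteq \bbR^{n+1}$ be its kernel, i.e.\ the subspace of $y$ with $y \cdot \mathbf n \equiv 0$ on $\Sigma$. Since $y \cdot \mathbf n$ depends only on the $K^\perp$-component of $y$, so do both sides of~\eqref{timespaceuniformresult}, and we may restrict to unit $y \in K^\perp$. On this finite-dimensional space $q$ is positive definite, so $q(y) \geq c_0 > 0$ on its unit sphere. Moreover $[(y\cdot\mathbf n)^2]_{\Sigma \setminus D_R} \leq [1]_{\Sigma \setminus D_R}$, which tends to zero by polynomial volume growth and the Gaussian weight, giving uniform convergence $[(y\cdot\mathbf n)^2]_{D_R} \to q(y)$ on that sphere. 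Hence for some $R_1$ and all $R \geq R_1$, $[(y\cdot\mathbf n)^2]_{D_R} \geq c_0/2$; likewise $[H^2]_{D_R} \geq [H^2]_\Sigma / 2$ for $R$ large.

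Combining the three ingredients, for $R$ sufficiently large,
\begin{equation*}
\frac{\bigl\lvert [H(y\cdot\mathbf n)]_{D_R} \bigr\rvert}{[H^2]_{D_R}^{1/2}\,[(y\cdot\mathbf n)^2]_{D_R}^{1/2}} \leq \frac{2\,\psi(R)}{\sqrt{[H^2]_\Sigma\, c_0}}\text,
\end{equation*}
and I choose $R_0$ large enough to make the right-hand side less than $\epsilon$. The main obstacle is the possible degeneracy of $q$, which requires the decomposition $\bbR^{n+1} = K \oplus K^\perp$ together with a finite-dimensional compactness argument to produce $c_0$ uniformly in $y$; once this is in place the rest of the proof is careful bookkeeping of bounds already present in Lemma~\ref{timespaceorthogonal}.
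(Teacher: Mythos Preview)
Your proof is correct and shares the paper's core idea: handle the possible degeneracy of $y \mapsto (y\cdot\mathbf n)$ by passing to the orthogonal complement of its kernel and invoking compactness of the unit sphere there. The difference is in how uniformity in $y$ is obtained. The paper treats the ratio $f_R(y) = [H(y\cdot\mathbf n)]_{D_R}/([H^2]_{D_R}^{1/2}[(y\cdot\mathbf n)^2]_{D_R}^{1/2})$ as a single object, uses Lemma~\ref{timespaceorthogonal} as a black box to get pointwise convergence $f_R \to 0$, and then appeals to compactness of the unit sphere (together with the monotonicity of the denominator) to upgrade this to uniform convergence. You instead reopen the proof of Lemma~\ref{timespaceorthogonal} to extract a bound $\lvert[H(y\cdot\mathbf n)]_{D_R}\rvert \le \psi(R)\abs y$ with $\psi$ independent of $y$, and separately use compactness only to bound the denominator from below. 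Your route is slightly more explicit and makes the source of uniformity transparent; the paper's route keeps Lemma~\ref{timespaceorthogonal} encapsulated but leaves the ``continuity plus compactness implies uniform convergence'' step a bit terse (pointwise convergence of continuous functions on a compact set does not by itself give uniform convergence, so some monotonicity or explicit bound like yours is implicitly needed anyway). Either way, the decomposition $\bbR^{n+1} = K \oplus K^\perp$ and the finite-dimensional compactness are the essential ingredients, and you have them.
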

\begin{proof}
The main idea of the proof is a simple compactness argument, but the possibility of $y \cdot \mathbf n \equiv 0$ gives rise to some technical complications.

If $[H^2]_{\Sigma} = 0$, then $H \equiv 0$ and so $[H(y \cdot \mathbf n)]_{D_R} = 0$ for all~$R$, and so the result is trivially true. Thus, we can assume that $[H^2]_{\Sigma} \neq 0$, and we let $R_1$ be such that $[H^2]_{D_{R_1}} > 0$.

For any~$R$, we let $A_R$ be the set of all $y \in \bbR^{n+1}$ such that $[(y \cdot \mathbf n)^2]_{D_R} = 0$. Since this condition is equivalent to $y \cdot \mathbf n$ being zero everywhere in $D_R$, $A_R$ is a linear subspace of $\bbR^{n+1}$. Since $R \mapsto A_R$ is monotonic, we can let $A = \lim_{R \rightarrow \infty} A_R$. Because $\bbR^{n+1}$ is finite-dimensional, there is some $R_2$ such that $A_R = A$ for all $R \geq R_2$. We then let $B$ be the orthogonal complement of $A$ in $\bbR^{n+1}$.

Consider the family of functions
\begin{equation*}
f_R(y) = \frac{\left[H(y\cdot \mathbf n)\right]_{D_R}}{\left[H^2\right]^{1/2}_{D_R}\left[(y \cdot \mathbf n)^2\right]^{1/2}_{D_R}}\text,
\end{equation*}
which we define for $y \in B \setminus \{0\}$ and $R \geq \max\{R_1, R_2\}$. Since the denominator is positive and increasing in $R$, Lemma~\ref{timespaceorthogonal} shows that $f_R(y) \rightarrow 0$ as $R \rightarrow \infty$, for each fixed $y \in B \setminus \{0\}$. Furthermore, note that $f$ is homogeneous of degree $0$ in $y$. From the fact that each $f_R$ is continuous in $y$, and the fact that the unit sphere in $B$ is compact, it then follows that $f_R \rightarrow 0$ as $R \rightarrow \infty$, with the convergence being uniform in $y$. This means that we can choose an $R_0 \geq \max\{R_1, R_2\}$ such that
\begin{equation}
\label{fruniformlimit}
f_R(y) \leq \epsilon
\end{equation}
for all $y \in B$.

Suppose now that $R \geq R_0$, and that $y \in \bbR^{n+1}$ is arbitrary. We decompose $y$ as $y = y_1 + y_2$ with $y_1 \in A$ and $y_2 \in B$. Since $y_1 \cdot \mathbf n \equiv 0$, we get
\begin{gather*}
\left[H(y \cdot \mathbf n)\right]_{D_R} = \left[H(y_2 \cdot \mathbf n)\right]_{D_R}\text{, and} \\
\left[(y\cdot \mathbf n)^2\right]_{D_R} = \left[(y_2 \cdot \mathbf n)^2\right]_{D_R}\text.
\end{gather*}
Thus \eqref{timespaceuniformresult} reduces to
\begin{equation*}
\left[H(y_2 \cdot \mathbf n)\right]_{D_R} \leq \epsilon \left[H^2\right]^{1/2}_{D_R} \left[(y_2 \cdot \mathbf n)^2\right]^{1/2}_{D_R}\text.
\end{equation*}
If $y_2 = 0$, then this is trivial; otherwise it follows from~\eqref{fruniformlimit}.
\end{proof}
\subsection{Bounds for the second variation}
To prove Theorem~\ref{maintheorem}, we will construct some unstable variations. To show that they are actually $F$-unstable, the strategy is to split the terms of the second variation formula (Proposition \ref{secondvariationformula}) up into different parts, corresponding to the different restricted spectra, and deal with each separately.

For one step, we will need the following elementary ``almost Bessel inequality'':
\begin{lemma}
\label{almostbessel}
Suppose that $V$ is a real vector space equipped with an inner product $\inner \cdot\cdot$ and corresponding norm $\norm\cdot$. Let $a, b, v \in V$, with $a$ and $b$ nonzero and $\abs{\inner ab} \leq \epsilon \norm a \norm b$, for some small $\epsilon$. Then
\begin{equation*}
\frac{\inner{v}{a}^2}{\norm{a}^2} + \frac{\inner{v}{b}^2}{\norm{b}^2} \leq \frac{1}{1 -\epsilon}\norm v^2\text.
\end{equation*}
\end{lemma}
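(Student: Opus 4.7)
The plan is to reduce to the two-dimensional case and then do a direct computation using the Gram matrix of $(a,b)$. By rescaling, I may assume without loss of generality that $\norm{a} = \norm{b} = 1$, so $\abs{\inner{a}{b}} \leq \epsilon$; write $c = \inner{a}{b}$. Since $\abs{c} \leq \epsilon < 1$, the vectors $a$ and $b$ are linearly independent, and $W = \operatorname{span}(a,b)$ is two-dimensional. Let $P$ be the orthogonal projection onto $W$. Then $\inner{v}{a} = \inner{Pv}{a}$, $\inner{v}{b} = \inner{Pv}{b}$, and $\norm{Pv} \leq \norm{v}$, so it suffices to prove the lemma with $v$ replaced by $Pv$, i.e., assume $v \in W$.

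Next I would compute $\norm{v}^2$ directly. Writing $v = \alpha a + \beta b$, the equations $\inner{v}{a} = \alpha + \beta c$ and $\inner{v}{b} = \alpha c + \beta$ express $(\alpha,\beta)$ as $G^{-1}$ applied to $(\inner{v}{a},\inner{v}{b})$, where $G = \bigl(\begin{smallmatrix} 1 & c \\ c & 1\end{smallmatrix}\bigr)$ is the Gram matrix. A short calculation gives
\begin{equation*}
\norm{v}^2 = \alpha \inner{v}{a} + \beta \inner{v}{b} = \frac{\inner{v}{a}^2 - 2c \inner{v}{a}\inner{v}{b} + \inner{v}{b}^2}{1 - c^2}\text.
\end{equation*}

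Finally I would bound the cross term using $2\abs{c\inner{v}{a}\inner{v}{b}} \leq \abs{c}(\inner{v}{a}^2 + \inner{v}{b}^2)$, which yields
\begin{equation*}
(1-c^2)\norm{v}^2 \geq \inner{v}{a}^2 + \inner{v}{b}^2 - 2c\inner{v}{a}\inner{v}{b} \geq (1 - \abs{c})\bigl(\inner{v}{a}^2 + \inner{v}{b}^2\bigr)\text.
\end{equation*}
Dividing through and using $1 - c^2 = (1-\abs{c})(1+\abs{c})$ gives $\inner{v}{a}^2 + \inner{v}{b}^2 \leq (1+\abs{c})\norm{v}^2 \leq (1+\epsilon)\norm{v}^2$, which is stronger than the claimed bound since $(1+\epsilon)(1-\epsilon) \leq 1$. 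Undoing the normalization $\norm{a} = \norm{b} = 1$ restores the statement in its original form.

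There is no real obstacle here — the result is a piece of elementary linear algebra. The only subtlety is recognizing that the right reduction is to project onto $\operatorname{span}(a,b)$ so that the problem becomes purely two-dimensional, after which the Gram matrix computation is immediate.
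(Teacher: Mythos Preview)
Your argument is correct, and in fact you obtain the sharper constant $1+\epsilon$ in place of the paper's $1/(1-\epsilon)$. The paper's proof is a little different in spirit: after normalizing to $\hat a = a/\norm a$, $\hat b = b/\norm b$, it simply expands
\[
0 \le \norm{v - \inner v{\hat a}\hat a - \inner v{\hat b}\hat b}^2
= \norm v^2 - \inner v{\hat a}^2 - \inner v{\hat b}^2 + 2\inner{\hat a}{\hat b}\inner v{\hat a}\inner v{\hat b},
\]
and then bounds the cross term by $\epsilon\bigl(\inner v{\hat a}^2 + \inner v{\hat b}^2\bigr)$ exactly as you do. Your route instead projects onto $W=\operatorname{span}(a,b)$ and computes $\norm{Pv}^2$ exactly via the Gram matrix, which is why you recover the optimal constant $1+|c|$ (the larger eigenvalue of the Gram matrix). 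The paper's approach is a touch quicker---no projection, no matrix inversion---but throws away the nonnegative quantity $\norm{v - \inner v{\hat a}\hat a - \inner v{\hat b}\hat b}^2$, which is generally positive even when $v\in W$; that is the source of the weaker constant. Either bound suffices for the application in Lemma~\ref{variationpartialbounds}.
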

\begin{proof}
Define $\hat a = a/\norm a$ and $\hat b = b / \norm b$. Then $\abs{\inner{\hat a}{\hat b}}\leq \epsilon$. 

We have
\begin{align*}
0 &\leq \norm{v - \inner v{\hat a} -  \hat a\inner v{\hat b}b}^2 \\
&= \norm v^2 - \inner v{\hat a}^2 - \inner v{\hat b}^2 + 2\inner{\hat a}{\hat b} \inner v{\hat a} \inner v{\hat b} \\
&\leq \norm v^2 - \inner v{\hat a}^2 - \inner v{\hat b}^2 +  + 2\epsilon \abs{ \inner v{\hat a} \inner v{\hat b}} \\
&\leq \norm v^2 - \inner v{\hat a}^2 - \inner v{\hat b}^2 + \epsilon(\inner v{\hat a}^2 + \inner v{\hat b}^2)\text.
\end{align*}
By rearranging, we get
\begin{equation*}
(1-\epsilon)\left(\inner v{\hat a}^2 + \inner v{\hat b}^2\right) \leq \norm v^2\text,
\end{equation*}
and the desired result follows.
\end{proof}
\begin{lemma}
\label{variationpartialbounds}
Let $f \in C^\infty_0(D_R)$, let $y \in \bbR^{n+1}$ be a fixed vector, and let $h \in \bbR$. Then
\begin{enumerate}[(i)]
\item
\begin{equation*}
\left[-\frac 12 f^2 + f y \cdot \mathbf n - \frac 12 (y \cdot \mathbf n)^2\right]_{D_R} \leq 0\text.
\end{equation*}
\item 
\begin{equation*}
\left[-\frac 32 f^2 + f y \cdot \mathbf n - \frac 12 (y \cdot \mathbf n)^2 + 2fhH - h^2 H^2\right]_{D_R} \leq 0\text.
\end{equation*}
\item If, in addition, we have
\begin{equation*}
\abs{[H(y \cdot \mathbf n)]_{D_R}}
\leq \epsilon [H]^{1/2}_{D_R} [(y \cdot \mathbf n)^2]^{1/2}_{D_R}
 \end{equation*}
 for some $\epsilon$ with $0 < \epsilon < 1$, then
\begin{equation*}
\left[-\frac{1}{1-\epsilon} f^2 + f y \cdot \mathbf n - \frac 12 (y \cdot \mathbf n)^2 + 2fhH - h^2 H^2\right]_{D_R} \leq 0\text.
\end{equation*}
\end{enumerate}
\end{lemma}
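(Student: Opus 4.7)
The strategy is to treat the three parts separately, with parts (i) and (ii) reducing to a pointwise completion of squares, while part (iii) is the only one that uses the almost-orthogonality hypothesis, via Lemma~\ref{almostbessel}.

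For part (i), I would simply observe the pointwise identity
\begin{equation*}
-\tfrac 12 f^2 + f(y \cdot \mathbf n) - \tfrac 12 (y \cdot \mathbf n)^2 = -\tfrac 12 (f - y \cdot \mathbf n)^2 \leq 0
\end{equation*}
and integrate. For part (ii), I would split $-\tfrac 32 f^2 = -\tfrac 12 f^2 - f^2$ and regroup so that the integrand becomes $-\tfrac 12 (f - y \cdot \mathbf n)^2 - (f - hH)^2$, which is again pointwise nonpositive.

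Part (iii) is the genuinely new content. I would introduce shorthand for the bracketed inner products on $D_R$: set $\alpha = [fH]_{D_R}$, $\beta = [f(y\cdot\mathbf n)]_{D_R}$, $\gamma = [H^2]_{D_R}$, $\delta = [(y\cdot\mathbf n)^2]_{D_R}$, and $\phi = [f^2]_{D_R}$. Expanding, the inequality to be proved becomes
\begin{equation*}
2h\alpha - h^2\gamma + \beta - \tfrac 12 \delta \leq \tfrac{1}{1-\epsilon}\phi.
\end{equation*}
I would bound the two nonquadratic groups separately by completing the square: $2h\alpha - h^2\gamma \leq \alpha^2/\gamma$ (maximizing over the real number $h$), and $\beta - \tfrac 12 \delta \leq \beta^2/(2\delta)$ (equivalent to $(\beta - \delta)^2 \geq 0$). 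Summing these bounds and discarding the factor of $2$ in the second denominator, it suffices to show $\alpha^2/\gamma + \beta^2/\delta \leq \tfrac{1}{1-\epsilon}\phi$. This is exactly Lemma~\ref{almostbessel} applied in the Hilbert space $L^2_{\mathrm w}(D_R)$ with $a = H$, $b = y \cdot \mathbf n$, $v = f$; the almost-orthogonality hypothesis supplied in the statement is precisely $|\langle a, b\rangle| \leq \epsilon \norm{a}\norm{b}$.

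The only real wrinkle, and what I expect to be the main bookkeeping obstacle, is the edge cases $\gamma = 0$ or $\delta = 0$, where the completing-the-square bounds and Lemma~\ref{almostbessel} are not directly applicable. If $\gamma = 0$ then $H \equiv 0$ on $D_R$, so $\alpha = 0$ and the $h$-terms drop out; the remaining inequality reduces to $\beta - \tfrac 12 \delta \leq \tfrac{1}{1-\epsilon}\phi$, which follows from $\beta - \tfrac 12\delta \leq \beta^2/(2\delta) \leq \phi/2$ by Cauchy-Schwarz (and trivially when $\delta = 0$ as well). Symmetrically, if $\delta = 0$ then $\beta = 0$, and the inequality reduces to $2h\alpha - h^2\gamma \leq \tfrac{1}{1-\epsilon}\phi$, again handled by completing the square and Cauchy-Schwarz $\alpha^2 \leq \gamma\phi$. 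With these cases dispatched, the main argument above closes the proof.
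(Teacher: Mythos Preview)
Your proof is correct and follows essentially the same route as the paper. Parts (i) and (ii) are identical; for part (iii) the paper also reduces the two quadratic groups to the projection-type terms $[fhH]_{D_R}^2/[h^2H^2]_{D_R}$ and $[f(y\cdot\mathbf n)]_{D_R}^2/[(y\cdot\mathbf n)^2]_{D_R}$ (via the same completion-of-squares idea, phrased as expanding $0\leq([h^2H^2]_{D_R}-[fhH]_{D_R})^2$), and then invokes Lemma~\ref{almostbessel} with $a=hH$, $b=y\cdot\mathbf n$, $v=f$; your choice $a=H$ is equivalent by homogeneity. The edge cases $\gamma=0$ or $\delta=0$ are dispatched in the paper with a one-line reference back to parts (i) and (ii), whereas you handle them a bit more explicitly, but the substance is the same.
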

\begin{proof}$ $
\begin{enumerate}[(i)]
\item This is just an expanded version of the inequality
\begin{equation*}
- \frac 12 \left[(f - y\cdot \mathbf  n)^2\right]_{D_R} \leq 0\text.
\end{equation*}
\item This is obtained by taking the sum of (i) with the similar inequality
\begin{equation*}
-\left[(f - hH)^2\right]_{D_R}\leq 0 \text.
\end{equation*}
\item If either $hH$ or $y \cdot \mathbf n$ is identically zero on $D_R$, we can easily prove this using the same technique as in parts (i) and (ii). So we may assume that they are not.

We note that $0 \leq ([h^2 H^2]_{D_R} - [f hH]_{D_R})^2$, from which it follows that
\begin{equation*}
\left[2fhH - h^2H^2\right]_{D_R} \leq \frac{[f hH]_{D_R}^2}{[h^2H^2]_{D_R}}\text.
\end{equation*}
Similarly, we have
\begin{equation*}
\left[fy\cdot\mathbf n - \frac 12(y \cdot \mathbf n)^2 \right]_{D_R} \leq \frac 12\frac {[f (y\cdot\mathbf n)]^2_{D_R}}{[(y\cdot \mathbf n)^2]_{D_R}}\text.
\end{equation*}
Adding these together, we get
\begin{equation}
\label{almostorthineq1}
\left[f y \cdot \mathbf n - \frac 12 (y \cdot \mathbf n)^2 + 2fhH - h^2 H^2\right]_{D_R}
\leq \frac{[fhH]_{D_R}^2}{[h^2H^2]_{D_R}} + \frac 12\frac{[f(y\cdot\mathbf n)]^2_{D_R}}{[(y\cdot \mathbf n)^2]_{D_R}} \\
\end{equation}
Applying Lemma~\ref{almostbessel} with $V = L^2_{\mathrm w}(D_R)$, $\inner{f}{g} = [fg]_{D_R}$, $v=f$, $a = hH$, and $b = y\cdot\mathbf n$, we get
\begin{equation}
\label{almostorthbessel}
\frac{[fhH]_{D_R}^2}{[h^2H^2]_{D_R}} + \frac{[f(y\cdot\mathbf n)]^2_{D_R}}{[(y\cdot \mathbf n)^2]_{D_R}}
\leq \frac{1}{1-\epsilon} [f^2]_{D_R}\text.
\end{equation}
By combining \eqref{almostorthineq1}~and~\eqref{almostorthbessel} and rearranging, we obtain the desired result.
\end{enumerate}
\end{proof}
\subsection{Proof of the main theorem}
\label{proofsection}
The main new content of this section is the following lemma, which constructs unstable variations assuming spectral information about~$L$.
\begin{lemma}
\label{unstablevariations}
Let $\Sigma$ be an immersed, rotationally symmetric, self-shrinking hypersurface in~$\bbR^{n+1}$ with polynomial volume growth, satisfying $\mu_1(0, \Sigma) < -1$ and $\mu_1(1, \Sigma) < -1/2$. For $R$ sufficiently large (depending on~$\Sigma$), the following holds: 

Let $f_0 \in C^\infty_0(0, D_R)$ and $f_1 \in C^\infty_0(1, D_R)$ be eigenfunctions corresponding to $\mu_1(0, D_R)$ and $\mu_1(1, D_R)$, respectively. Let also $g_1 \in C^\infty_{0, \mathrm{sin}}(1, D_R)$ be the lowest eigenfunction of $L$ in $C^\infty_{0, \mathrm{sin}}(1, D_R)$ (i.e. $g_1 = f_1 \tan \theta$). Then any nonzero linear combination $f = \alpha f_0 + \beta f_1 + \gamma g_1$ represents an unstable variation of $\Sigma$ in the sense of Definition \ref{unstabledefn}.
\end{lemma}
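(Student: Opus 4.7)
The plan is to show directly from Definition \ref{unstabledefn} that for every $h \in \bbR$ and $y \in \bbR^{n+1}$, the second variation
\[
Q(f, h, y) = [-fLf + 2fhH - h^2H^2 + fy\cdot\mathbf n - \tfrac 12(y\cdot\mathbf n)^2]_{D_R}
\]
is strictly negative whenever $(\alpha, \beta, \gamma) \neq 0$. The key structural point is that every ingredient splits cleanly across Fourier modes. By Proposition \ref{lkpropn}, $L$ preserves each of $C^\infty_0(0, D_R)$, $C^\infty_0(1, D_R)$, and $C^\infty_{0, \mathrm{sin}}(1, D_R)$, so cross-mode terms in $[-fLf]_{D_R}$ vanish. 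Decomposing $y = \tilde y + y_n e_n + y_{n+1} e_{n+1}$ with $\tilde y$ having zero last two coordinates, Proposition \ref{knownfuncsspecial} places $H$ and $\tilde y \cdot \mathbf n$ in $C^\infty(0, \Sigma)$, $y_n(e_n \cdot \mathbf n)$ in $C^\infty(1, \Sigma)$, and $y_{n+1}(e_{n+1} \cdot \mathbf n)$ in $C^\infty_{\mathrm{sin}}(1, \Sigma)$. Consequently $Q = Q_A + Q_B + Q_C$, where the three pieces depend on disjoint subsets of the data: $Q_A$ on $(\alpha, h, \tilde y)$, $Q_B$ on $(\beta, y_n)$, and $Q_C$ on $(\gamma, y_{n+1})$.

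For the mode-1 pieces, say
\[
Q_B = \beta^2 \mu_1(1, D_R)[f_1^2]_{D_R} + \beta y_n [f_1 \, e_n \cdot \mathbf n]_{D_R} - \tfrac 12 y_n^2 [(e_n \cdot \mathbf n)^2]_{D_R},
\]
I would maximize the concave quadratic in $y_n$ and bound the resulting term using Cauchy-Schwarz $[f_1\, e_n \cdot \mathbf n]_{D_R}^2 \leq [f_1^2]_{D_R}[(e_n \cdot \mathbf n)^2]_{D_R}$. This yields $Q_B \leq \beta^2[f_1^2]_{D_R}\bigl(\mu_1(1, D_R) + \tfrac 12\bigr)$, and since the monotone convergence $\mu_1(1, D_R) \to \mu_1(1, \Sigma) < -\tfrac 12$ makes the right-hand side strictly negative for $R$ sufficiently large, $Q_B < 0$ whenever $\beta \neq 0$ (and $Q_B \leq 0$ in all cases). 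The same argument handles $Q_C$.

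The main obstacle is $Q_A$, because both $H$ and $\tilde y \cdot \mathbf n$ inhabit the $(0, \cos)$ mode, and a naive Cauchy-Schwarz applied separately to the two cross terms $[f_0 H]_{D_R}$ and $[f_0 \tilde y \cdot \mathbf n]_{D_R}$ would cost a factor of two on the $[f_0^2]$ side and fail to beat $\mu_1(0, \Sigma) < -1$. This is exactly where the almost-orthogonality of $H$ and $\tilde y \cdot \mathbf n$ from Lemma \ref{timespaceuniform} must be invoked. Fix $\delta_0 > 0$ with $\mu_1(0, \Sigma) < -1 - 2\delta_0$, pick $\epsilon > 0$ small enough that $\tfrac{1}{1-\epsilon} < 1 + \delta_0$, and then take $R_0$ from Lemma \ref{timespaceuniform} corresponding to this $\epsilon$. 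For $R \geq R_0$, Lemma \ref{variationpartialbounds}(iii) applied with test function $\alpha f_0$ and translation vector $\tilde y$ gives
\[
[2\alpha f_0 hH - h^2 H^2 + \alpha f_0 \tilde y \cdot \mathbf n - \tfrac 12 (\tilde y \cdot \mathbf n)^2]_{D_R} \leq \tfrac{\alpha^2}{1-\epsilon}[f_0^2]_{D_R},
\]
so that $Q_A \leq \alpha^2[f_0^2]_{D_R}\bigl(\mu_1(0, D_R) + \tfrac{1}{1-\epsilon}\bigr)$, and this is strictly negative when $\alpha \neq 0$, after further enlarging $R$ so that $\mu_1(0, D_R) < -1 - \delta_0$.

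Putting the three estimates together, $Q = Q_A + Q_B + Q_C < 0$ for every choice of $h$ and $y$ whenever $(\alpha, \beta, \gamma) \neq 0$, which is exactly the $F$-instability criterion of Definition \ref{unstabledefn}.
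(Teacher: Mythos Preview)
Your argument is essentially the paper's own proof, with the same Fourier decomposition $Q = Q_A + Q_B + Q_C$ and the same treatment of the mode-$1$ pieces via Lemma~\ref{variationpartialbounds}(i) (which is precisely your ``maximize over $y_n$ and apply Cauchy--Schwarz'' step). There is, however, one genuine gap in your handling of $Q_A$.

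You invoke Lemma~\ref{timespaceuniform} to obtain the $\epsilon$-almost-orthogonality of $H$ and $\tilde y \cdot \mathbf n$, but that lemma carries the standing hypothesis $\mu_1(0,\Sigma) > -\infty$ (needed to control $[\abs{A}^2\abs{x}^2]_\Sigma$ in the proof of Lemma~\ref{timespaceorthogonal}). Nothing in the hypotheses of the present lemma rules out $\mu_1(0,\Sigma) = -\infty$, so your argument, as written, does not cover that case. The paper deals with this by splitting into two cases: when $\mu_1(0,\Sigma) = -\infty$ one simply takes $R$ large enough that $\mu_1(0,D_R) < -3/2$ and uses the crude bound of Lemma~\ref{variationpartialbounds}(ii), which is exactly the ``naive Cauchy--Schwarz costing a factor of two'' estimate you yourself mention. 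Only in the finite case does the almost-orthogonality of Lemma~\ref{timespaceuniform} come into play. Adding this easy case distinction completes your proof.
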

\begin{proof}Fix an $R \geq R_0$, with $R_0$ to be chosen later, and let $y \in \bbR^{n+1}$ and $h, \alpha, \beta, \gamma \in \bbR$ be arbitrary, with the restriction that $\alpha$, $\beta$, and $\gamma$ not all be zero.

We take $f = \alpha f_0 + \beta f_1 + \gamma f_{1, s}$ as in the statement. By Proposition~\ref{secondvariationformula}, our task is to show that
\begin{equation}
\label{toshowsecondvariation}
\left[ -f L f + 2 f h H - h^2 H^2 + f y \cdot \mathbf n - \frac 12 (y \cdot \mathbf n)^2\right]_{D_R} < 0\text.
\end{equation}
Note that $f_0$, $f_1$, $f_{1, s}$ depend on $R$, so we will need to take care to ensure that the conditions we impose on $R$ do not depend on $f$, nor on $y$, $h$, $\alpha$, $\beta$, or $\gamma$.

Write $y = y_0 + y_1 + y_{1,s}$, where $y_1$ is parallel to~$e_n$, $y_{1,s}$ is parallel to~$e_{n+1}$, and $y_0$ is orthogonal to both $e_n$ and $e_{n+1}$. By Proposition~\ref{knownfuncsspecial}, it follows that $y_0 \cdot \mathbf n \in C^\infty(0, \Sigma)$, $y_1\cdot \mathbf n \in C^\infty(1, \Sigma)$ and $y_{1,s}\cdot \mathbf n \in C^\infty_{\mathrm{sin}}(1, \Sigma)$. Furthermore, we also have $H \in C^\infty(0, \Sigma)$.

We thus have decompositions of $f$ and $y \cdot \mathbf n$ into components corresponding to the three spaces $C^\infty(0, \Sigma$), $C^\infty(1, \Sigma)$, and $C^\infty_{\mathrm{sin}}(1, \Sigma)$:
\begin{align*}
f &= \alpha f_0 + \beta f_1 + \gamma g_1 \\
y\cdot\mathbf n &= y_0 \cdot \mathbf n + y_1 \cdot \mathbf n + y_{1, s} \cdot \mathbf n.
\end{align*}
By substituting these formulas for $f$ and $y\cdot \mathbf n$, the left-hand side of~\eqref{toshowsecondvariation} can be split up into components corresponding to those three spaces:
\begin{equation*}
\left[ -f L f + 2 f h H - h^2 H^2 + f y\cdot \mathbf n - \frac 12 (y \cdot \mathbf n)^2\right]_{D_R} = A +  B + C
\end{equation*}
where
\begin{align*}
A &= \left[-\alpha^2 f_0 Lf_0 + 2\alpha f_0 hH - h^2 H^2 + \alpha f_0y_0 \cdot \mathbf n
- \frac{(y_0 \cdot \mathbf n)^2}{2}\right]_{D_R}\text,\\
B &= \left[-\beta^2 f_1 Lf_1 + \beta f_1y_1\cdot \mathbf n
- \frac{(y_1\cdot\mathbf n)^2}{2}\right]_{D_R}\text{, and} \\
C &= \left[-\gamma^2 g_1 Lg_1 + \gamma g_1y_{1,s}\cdot \mathbf n 
- \frac{(y_{1,s} \cdot\mathbf n)^2}{2}\right]_{D_R}\text,
\end{align*}
and where all the other cross terms vanish because of orthogonality between the spaces.

Note that $-f_1 Lf_1 = \mu_1(1, D_R) f_1^2$.
Since $\mu_1(1, \Sigma) < -1/2$, we can pick $R_1$ such that $\mu_1(1, D_{R_1}) < -1/2$. Assuming that $R_0 \geq R_1$, we then have $R \geq R_1$, and $\mu_1(1, D_R) < -1/2$. Thus
\begin{equation*}
B \leq \left[-\frac 12 \beta^2 f_1^2 + \beta f_1 y_1 \cdot n - \frac{(y_1 \cdot \mathbf n)}{2}\right]_{D_R}\text,
\end{equation*}
and Lemma~\ref{variationpartialbounds}(i) now shows that $B \leq 0$. Equality holds only when $\beta = 0$. 

The same argument shows that $C \leq 0$, with equality only when $\gamma = 0$.

As for $A$, we have $-f_0 L f_0 = \mu_0(1, D_R) f_0^2$, so
\begin{equation*}
A = \left[\alpha^2 \mu_0(1, D_R) + 2\alpha f_0 h H - h^2 H^2 + \alpha f_0 y_0 \cdot \mathbf n - \frac{(y_0 \cdot \mathbf n)^2}{2}\right]_{D_R}\text.
\end{equation*}
To show that $A \leq 0$, we will split into two cases:
\begin{description}
\item[Case 1: $\mu_1(0, \Sigma) = -\infty$.] In this case, we can choose $R_2$ such that $\mu_1(0, D_{R_2}) < -3/2$. Assuming that $R \geq R_2$, we get $\mu_0(1, D_R) \leq -3/2$, and Lemma \ref{variationpartialbounds}(ii) then shows that $A \leq 0$, with equality only if $\alpha = 0$.
\item[Case 2: $-\infty < \mu_1(0, \Sigma) < -1$.] 
In this case, we first choose a large $R_3$ such that $\mu_1(0, D_{R_3}) < -1$, and then an $\epsilon > 0$ such that
\begin{equation}
\label{epsilondistance}
\mu_1(0, D_{R_3}) < -  \frac{1}{1-\epsilon}\text.
\end{equation}
Since $\mu_1(0, \Sigma) > -\infty$, Lemma~\ref{timespaceuniform} shows that we can find an $R_2 \geq R_3$, independent of $y$, such that
\begin{equation*}
\abs{\left[H(y\cdot\mathbf n)\right]_{D_{R}}} \leq \epsilon \left[H\right]_{D_{R}}^{1/2}\left[(y\cdot \mathbf n)^2\right]_{D_{R}}^{1/2}\text,
\end{equation*}
whenever $R \geq R_2$. By monotonicity of $\mu_1(0, D_R)$, \eqref{epsilondistance} also gives
\begin{equation}
\mu_1(0, D_{R}) < -\frac{1}{1-\epsilon}\text,
\end{equation}
and we get
\begin{equation*}
A \leq \left[-\frac{1}{1-\epsilon} f_0^2 + 2\alpha f_0 hH - hH^2 + \alpha f_0 y_0 \cdot n - \frac{(y_0 \cdot \mathbf n)^2}{2}\right]_{D_{R}}.
\end{equation*}
Now, we can apply Lemma~\ref{variationpartialbounds}(iii) to show that $A \leq 0$. Again, equality is only possible if $\alpha = 0$.
\end{description}
Assuming that we initially picked $R_0 = \max\{R_2, R_3\}$, we have shown that $A \leq 0$, $B \leq 0$, and $C \leq 0$, with simultaneous equality only happening when $\alpha = \beta = \gamma = 0$. Since $f$ is assumed non-zero, we in fact get $A + B + C < 0$. This holds for all possible $y$ and $h$, and thus $f$ represents an unstable variation.
\end{proof}
\begin{lemma}
\label{hchangessign}
Suppose $\Sigma$ is an immersed, rotationally symmetric self-shrinking hypersurface with polynomial volume growth. Write $e_n \cdot \mathbf n = u(x') \cos(\theta)$, where $u$ is a function on~$\Sigma'$, and assume that $u$ changes sign in the interior of~$\Sigma'$. Then $H$ changes sign on $\Sigma$.
\end{lemma}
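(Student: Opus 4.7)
I would argue by contrapositive: suppose $H$ does not change sign on~$\Sigma$, and without loss of generality $H \geq 0$ everywhere. First handle the degenerate case $H \equiv 0$. The shrinker equation $\vec H = -\tfrac12 x^\perp$ then forces $x^\perp \equiv 0$, so $\Sigma$ is a cone through the origin; combined with smoothness and polynomial volume growth this implies $\Sigma$ is a hyperplane through the origin, on which $\mathbf n$ is constant, making $u$ constant on~$\Sigma'$ and contradicting the hypothesis that $u$ changes sign. Hence $H \not\equiv 0$, and the strong maximum principle applied to the elliptic equation $LH - H = 0$ (whose coefficients are smooth and locally bounded on compact subdomains) upgrades $H \geq 0$ to $H > 0$ strictly on all of~$\Sigma$.

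Next I would draw out the spectral consequences. By Proposition~\ref{knowneigenfunctions}(i) and rotational invariance, $H \in C^\infty(0, \Sigma)$ is an eigenfunction of~$L_0$ with eigenvalue~$-1$. A Barta/Picone-type argument applied on the compact exhaustion~$D_R$, exploiting the strict positivity of~$H$, gives $\mu_1(0, \Sigma) \geq -1$; combined with the upper bound from Theorem~\ref{bottomofspectrumtest}(i) applied to~$H$, this pins down $\mu_1(0, \Sigma) = -1$ with $H$ as a positive ground state. On the other hand, by Proposition~\ref{knowneigenfunctions}(ii), $e_n \cdot \mathbf n = u\cos\theta$ is an eigenfunction of~$L$ with eigenvalue~$-\tfrac12$, and since $u$ changes sign, Theorem~\ref{bottomofspectrumtest}(ii) gives $\mu_1(1, \Sigma) < -\tfrac12$.

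The main obstacle is to extract a genuine contradiction from this configuration. My preferred route is via the substitution $\phi = u\cos\theta/H$, which is smooth on~$\Sigma$ since $H > 0$, and by a Leibniz-type computation combining $LH = H$ with $L(u\cos\theta) = \tfrac12 u\cos\theta$ satisfies
\begin{equation*}
\tilde{\mathcal L}\phi + \tfrac12 \phi = 0,
\end{equation*}
where $\tilde{\mathcal L}\psi = \mathcal L \psi + 2\nabla \log H \cdot \nabla \psi$ is the drift Laplacian conjugated by~$H$, formally self-adjoint with respect to the weighted measure $H^2 e^{-|x|^2/4} \, d\vol^n$, with constants in its kernel and respecting the $SO(2)$-Fourier decomposition. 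The conjugation shifts the $L$-spectrum upward by~$1$; combined with an analogue of Theorem~\ref{compactspectraltheorem}(iv) for~$\tilde{\mathcal L}$ and the noncompact test-function machinery of Theorem~\ref{bottomofspectrumtest}, the sign-change of $\phi$'s radial part~$u/H$ should force the bottom $k=1$ eigenvalue of~$-\tilde{\mathcal L}$ strictly below~$\tfrac12$, and thus the existence of a strictly positive $k=1$ eigenfunction of~$L$ with eigenvalue below~$-\tfrac12$. Combining this with the integral identity $[uH(\tfrac12 - 1/r^2)]'_{\Sigma'} = 0$ (a consequence of self-adjointness of~$L_0$ together with the eigenvalue equations for~$H$ and~$u$) and the structural relation $L_0 = L_1 + 1/r^2$ should produce the required contradiction with $\mu_1(0, \Sigma) = -1$. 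Carrying this through rigorously—especially controlling test functions near the axis singularity of~$\log H$ and at infinity—is where the bulk of the technical difficulty lies. A possible fallback, if the conjugation approach becomes unwieldy, is to adapt Colding and Minicozzi's mean-convex classification from~\cite{cm} to the rotationally symmetric immersed setting: under the symmetry, mean-convexity would force $\Sigma$ to be a sphere, cylinder, or hyperplane through the origin, each of which has $u$ sign-constant on~$\Sigma'$, again contradicting the hypothesis.
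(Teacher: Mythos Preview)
Your main spectral route does not close. The facts you extract---$\mu_1(0,\Sigma)=-1$ with positive ground state~$H$, and $\mu_1(1,\Sigma)<-\tfrac12$ from the sign change of~$u$---are perfectly compatible with each other: since $L_0=L_1+1/r^2$ as operators (formula~\eqref{lkformula}), the Rayleigh quotients satisfy the monotonicity $\mu_1(0,\Sigma)\le\mu_1(1,\Sigma)$, and any value $-1\le\mu_1(1,\Sigma)<-\tfrac12$ is consistent with everything you have derived. The conjugation $\phi=u\cos\theta/H$ does produce a $k=1$ eigenfunction of~$\tilde{\mathcal L}$ with eigenvalue~$\tfrac12$ whose radial part changes sign, and hence a strictly positive $k=1$ ground state of~$L$ with eigenvalue strictly below~$-\tfrac12$; but ``strictly below $-\tfrac12$'' is exactly what you already knew, not a contradiction. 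Your integral identity $[uH(\tfrac12-1/r^2)]'_{\Sigma'}=0$ is formally correct, but since $\tfrac12-1/r^2$ changes sign across $r=\sqrt2$ and $u$ changes sign as well, the vanishing of this integral carries no contradiction either. I do not see a way to push the purely spectral argument through.

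Your fallback \emph{is} the paper's proof, and it is a two-line argument rather than a technical project. Assume $H\ge0$. The mean-convex classification of Colding--Minicozzi (Theorem~10.1 of~\cite{cm}) gives, for embedded shrinkers, that $\Sigma$ is a round sphere, a cylinder $S^{k}\times\bbR^{n-k}$, or a hyperplane; for each of these one checks directly that $u=e_r\cdot\mathbf n_{\Sigma'}$ has a sign. In the immersed case the Colding--Minicozzi argument only fails to exclude products $\Gamma\times\bbR^{n-1}$ with $\Gamma$ an Abresch--Langer curve; but such a product admits a rotational $SO(2)$-symmetry only when $n\ge3$ and the rotation acts in the $\bbR^{n-1}$ factor, in which case $e_n\cdot\mathbf n\equiv0$ and again $u$ does not change sign. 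This is the whole proof; you should lead with it rather than relegating it to a fallback.
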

\begin{proof}
Suppose not, i.e.\ that $H \geq 0$ for a suitable choice of normal direction. If $\Sigma$ is embedded, Theorem~10.1 of~\cite{cm} gives that $\Sigma$ is a sphere, a cylinder, or a plane. For these shrinkers, $u$ does not change sign, which rules them out. 

Since our $\Sigma$ is not necessarily embedded, we need to say a little more. The proof of Theorem~10.1 in~\cite{cm} (see also the remarks below Theorem~0.17 in the same work) reveals that embeddedness is only used to rule out shrinkers of the form $\Sigma = C \times \bbR^{n-1}$, where $C$ is one of the non-embedded self-shrinking curves discovered by Abresch and Langer in~\cite{abreschlanger}. However, these shrinkers are only rotationally symmetric if $n-1 \geq 2$ and the rotation axis is orthogonal to the $\bbR^{n-1}$ part. In that case, we have $e_n \cdot \mathbf n \equiv 0$, and so $u$ does not change sign. This finishes the proof even in the immersed case.
\end{proof}
Finally, we are in a position to prove the main theorem:
\begin{proof}[Proof of Theorem~\ref{maintheorem}]
Consider the functions $H$ and $e_n \cdot \mathbf n$ on $\Sigma$. By Proposition~\ref{knowneigenfunctions}, they are eigenfunctions of~$L$ of eigenvalues $-1$ and $-1/2$, respectively. By Proposition~\ref{knownfuncsspecial}, we have $H \in C^\infty(0, \Sigma)$ and $e_n \cdot \mathbf n \in C^\infty(1, \Sigma)$. Furthermore, since $\abs H = \abs{x \cdot \mathbf n} / 2 \leq \abs{x}/2$ and $\abs{e_n \cdot \mathbf n} \leq 1$, both functions have polynomial growth. We can write $e_n \cdot \mathbf n = u(x') \cos(\theta)$, where $u$ changes sign on $\Sigma'$ by assumption. By Lemma~\ref{hchangessign}, $H = v \cos(0)$ changes sign too. Theorem~\ref{bottomofspectrumtest} now tells us that $\mu_1(0, \Sigma) < -1$ and $\mu_1(0, \Sigma) < -1/2$, and Lemma~\ref{unstablevariations} shows that the $F$-index of~$\Sigma$ is at least 3.
\end{proof}
\begin{proof}[Proof of Corollary~\ref{maincor}]
Let $C \subset \bbR_{\geq 0} \times \bbR$ be the image of~$\Sigma$ under the quotient map $\bbR^{n+1} \rightarrow \bbR^{n+1}/SO(n)$. By the same reasoning as in the proof of Lemma~\ref{sigmaprimeprops}, $C$ is an immersed curve whose interior is connected.

Let $e_R$ denote the first standard basis vector in $\bbR_{\geq 0} \times \bbR$. We will first show that $e_R \cdot \mathbf n$ changes sign on~$C$. If $\Sigma$ is embedded, then by the classification in Theorem~2 of~\cite{mollerkleene}, $C$ is a closed curve, and thus $e_R \cdot \mathbf n$ must change sign on $C$. On the other hand, if $\Sigma$ is not embedded, then $C$ is not embedded either. Since $C$ is connected, one section of $C$ must be a closed loop, and again, it follows that $e_R \cdot \mathbf n$ must change sign on $C$.

Now, $C$ can be identified with a particular one-dimensional slice of $\Sigma$, on which $e_r \cdot \mathbf n$ is equal to the function $e_R \cdot \mathbf n$ on $C$. Thus $e_r \cdot \mathbf n$ changes sign on $\Sigma$, and we can apply Theorem~\ref{maintheorem}.
\end{proof}
\subsection{Entropy index}
\label{entropyproof}
In this section, we will prove Corollary~\ref{entropycor}, which says that under the hypotheses of either Theorem~\ref{maintheorem} or Corollary~\ref{maincor}, the entropy index is at least 3.

First of all, we specify what is meant by this conclusion. Being a supremum of $F$-functionals, the entropy is not twice differentiable even in a formal sense, so it is not immediately clear what the entropy index means. We make the following definition:
\begin{defn}
\label{entropyindexdefn}Suppose that $\Sigma$ is an immersed shrinker in $\bbR^{n+1}$. We say that $\Sigma$ has \emph{entropy index at least $k$} if we can find a linear space~$V$, consisting of smooth, normal vector fields to~$\Sigma$, with $\dim V = k$, satisfying the following property: If $X$ is any nonzero vector field in~$V$, and $\Sigma_s$ is any variation of~$\Sigma$ with variation vector field~$V$, then there is some $\epsilon > 0$ such that $\lambda(\Sigma_s) < \lambda(\Sigma)$ for all $s \in (-\epsilon, \epsilon) \setminus \{0\}$.
\end{defn}

In the situation when the shrinker $\Sigma$ does not split off a line isometrically, results in~\cite{cm} give Corollary~\ref{entropycor} as a direct consequence of Theorem~\ref{maintheorem} and Corollary~\ref{maincor}. In the case when $\Sigma$ does split off a line, we will supply a little more reasoning, in the form of the two following lemmas.
\begin{lemma}
\label{splitofflinehypotheses}Suppose that $\Sigma$ is an immersed, self-shrinking hypersurface in $\bbR^{n+1}$. Write $\Sigma$ as an isometric product $\tilde \Sigma \times \bbR^m$. If $\Sigma$ satisfies the hypotheses of either Theorem~\ref{maintheorem} or Corollary~\ref{maincor}, then so does $\tilde \Sigma$.
\end{lemma}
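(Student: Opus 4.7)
The plan is to upgrade the intrinsic isometric splitting $\Sigma = \tilde\Sigma \times \bbR^m$ to an \emph{extrinsic} one---namely an orthogonal decomposition $\bbR^{n+1} = \bbR^{n-m+1} \times \bbR^m$ with $\tilde\Sigma \subset \bbR^{n-m+1}$ and $\Sigma$ translation-invariant in the $\bbR^m$-directions---and then to transfer the hypotheses one at a time. The upgrade is a standard fact about shrinkers with an isometric Euclidean factor and I would invoke it from~\cite{cm}. Once it is in place, the facts that $\tilde\Sigma$ is oriented, immersed, satisfies the shrinker equation, and has polynomial volume growth are all routine: the shrinker equation reduces directly because $\vec H_\Sigma$ and $x^\perp$ both depend only on the $\tilde\Sigma$-factor; the orientation is inherited; and the polynomial bound on $\vol^n(\Sigma\cap B_R)$ implies one on $\vol^{n-m}(\tilde\Sigma \cap B_R)$ via the containment $(\tilde\Sigma \cap B_R) \times B_R^{\bbR^m} \subseteq \Sigma \cap B_{R\sqrt 2}$.

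For Theorem~\ref{maintheorem} I would then track the $SO(2)$-symmetry through the splitting. Set $W = \bbR^m$; the set of translational symmetries of $\Sigma$ is canonically defined, so $SO(2)$ preserves $W$ and hence $W^\perp$, making both $SO(2)$-submodules of $\bbR^{n+1}$. Under the paper's setup $SO(2)$ acts with a single standard $2$-dimensional summand and an $(n{-}1)$-dimensional trivial summand, so by uniqueness of isotypic decomposition the rotation plane lies entirely inside $W$ or entirely inside $W^\perp$. The first possibility would put $e_r$ into $W$ and $\mathbf n_\Sigma(p,q) = \mathbf n_{\tilde\Sigma}(p)$ into $W^\perp$, forcing $e_r \cdot \mathbf n \equiv 0$ and contradicting the sign-change hypothesis; so the rotation plane lies in $W^\perp$. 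Then $SO(2)$ restricts to a nontrivial rotational symmetry of $\tilde\Sigma$ in $W^\perp = \bbR^{n-m+1}$ with axis $A \cap W^\perp$, so the $e_r$ and $\mathbf n$ for $\tilde\Sigma$ agree pointwise with those for $\Sigma$, and the sign change transfers.

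For Corollary~\ref{maincor} I would instead argue that a nontrivial splitting is incompatible with the hypotheses, reducing the lemma to the trivial case $\tilde\Sigma = \Sigma$. Let $\ell \subset \bbR^{n+1}$ be the fixed line of the $SO(n)$-action; then $\bbR^{n+1} = \ell \oplus \ell^\perp$ has $\ell$ trivial and $\ell^\perp \cong \bbR^n$ irreducible (for $n \ge 2$), so the $SO(n)$-invariant subspace $W$ is one of $0$, $\ell$, $\ell^\perp$, or $\bbR^{n+1}$. Dimension rules out $\bbR^{n+1}$; if $W = \ell^\perp$ then $\Sigma$ is a hyperplane and the shrinker equation forces it through the origin, so $\Sigma$ is the excluded plane; if $W = \ell$ then $\tilde\Sigma \subset \bbR^n$ is an $(n{-}1)$-dimensional $SO(n)$-invariant hypersurface, hence a sphere $S^{n-1}(r)$, and the shrinker equation forces $r = \sqrt{2(n-1)}$, making $\Sigma$ the excluded cylinder. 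Thus $W = 0$.

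The main obstacle is really the very first step---ensuring that an abstract isometric $\bbR^m$-factor is realized as a linear direction of the ambient Euclidean space. Once this structural splitting is secured, the remainder is a short piece of representation-theoretic bookkeeping combined with direct substitution into the shrinker equation.
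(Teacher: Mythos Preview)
Your proof is correct, but it diverges from the paper's in a couple of interesting ways.

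For the Theorem~\ref{maintheorem} case, both arguments reach the same conclusion---the rotation plane must sit inside $W^\perp$---but by different routes. The paper argues directly: if the rotation plane were not perpendicular to the splitting factor, one could compose a half-rotation with translations to manufacture a translational symmetry inside the rotation plane, and then conjugate by rotations to see that $\Sigma$ is translation-invariant along the entire plane, forcing $e_r\cdot\mathbf n\equiv 0$. Your argument is more structural: since the full translational symmetry space is canonical, it is an $SO(2)$-submodule, and because the nontrivial isotypic piece of $\bbR^{n+1}$ is exactly the two-dimensional rotation plane, that plane must lie wholly in $W$ or wholly in $W^\perp$. Your version is cleaner and avoids the explicit composition trick, at the cost of invoking a small piece of representation theory. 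One minor point of phrasing: you write ``Set $W=\bbR^m$'' and then immediately appeal to $W$ being the \emph{canonical} translational symmetry space; it would be clearer to either take the splitting to be maximal from the start, or explicitly replace $W$ by the full translation lattice before invoking $SO(2)$-invariance.

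For the Corollary~\ref{maincor} case, the approaches are genuinely different in content, not just style. The paper permits $m\ge 1$ and argues by a dimension count that the intersection of the $SO(n)$-plane with $V_2$ is at least $(n-m)$-dimensional, so $\tilde\Sigma$ retains an $SO(n-m)$ symmetry and hence satisfies the hypotheses of Corollary~\ref{maincor} in its own dimension. You instead prove the stronger fact that no nontrivial splitting exists: since the $SO(n)$-module $\bbR^{n+1}$ has only $\ell$ and $\ell^\perp$ as irreducible pieces (for $n\ge 2$), the translational symmetry space must be one of $0$, $\ell$, $\ell^\perp$, $\bbR^{n+1}$, and the last three force $\Sigma$ to be the plane or the cylinder. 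This gives more information and makes the lemma vacuous in this case, which is arguably tidier; the paper's approach, on the other hand, is more robust in that it would still say something if the excluded list were smaller.
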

\begin{proof}
We decompose $\bbR^{n+1}$ as $V_1 \oplus V_2$, where $\dim V_1 = m$, and $V_1$ consists of the same directions as the factor $\bbR^{m}$ in the decomposition $\Sigma = \tilde \Sigma \times \bbR^m$. In this way, we can view $\tilde \Sigma$ as an immersed hypersurface in $V_2$, and it is easy to see that $\tilde \Sigma$ is a shrinker.

We now split into two cases:
\begin{description}
\item[Case 1: $\Sigma$ satisfies the hypotheses of Corollary~\ref{maincor}.] $\Sigma$ is not a sphere, cylinder or a plane, and therefore, neither is $\tilde \Sigma$. Furthermore, $\Sigma$ is invariant under a subgroup $SO(n) \subset SO(n+1)$. Letting $W$ denote the $n$-dimensional hyperplane in $\bbR^{n+1}$ on which this $SO(n)$ acts, a simple dimension count shows that $\dim W \cap V_2 \geq \dim V_2 - 1$. It follows that $\tilde \Sigma$, as an immersed hypersurface in $V_2$, is invariant under a subgroup $SO(n-m)$ of $SO(n+1-m)$. Thus, $\tilde \Sigma$ also satisfies the hypotheses of Corollary~\ref{maincor}.
\item[Case 2: $\Sigma$ satisfies the hypotheses of Theorem~\ref{maintheorem}.] In particular, this means that we can find a decomposition $\bbR^{n+1} = W_1 \oplus W_2$, where $\dim W_1 = 2$, and where $\Sigma$ is invariant under the group $SO(2)$ acting as rotations in $W_1$.

Assume, as a hypothesis for contradiction, that $W_1$ not perpendicular to $V_1$. Let $v \in V_1$ be a vector whose orthogonal projection onto $W_1$, which we will call $\Pi v$, is non-zero. Let also $R$ denote rotation by an angle $\pi$ in $W_1$. By assumption, $\Sigma$ is invariant under both $R$ and translation by $c v$, for any $c \in \bbR$. Therefore, it must be invariant under the composition
\begin{equation*}
x \mapsto R^{-1}\left(R\left(x + \frac 12c v\right) - \frac 12 c v\right)\text.
\end{equation*}
A little algebra reveals that the right-hand side simplifies to $x + c\Pi v$; in other words, $\Sigma$ is invariant under translations by $c \Pi v$ as well. Since $\Pi v$ is nonzero, we can show that $\Sigma$ is invariant under translations by any vector in $W_1$ by considering conjugations of translations by $c \Pi v$ and rotations in $W_1$. This means that the tangent plane $T_x \Sigma$ of $\Sigma$ must be parallel to $W_1$ for all $x \in \Sigma$., which in turn implies that $e_r \cdot \mathbf n = 0$ everywhere (where $e_r$ is a vector parallel to $W_1$ which points radially outward), which contradicts the second hypothesis of Theorem~\ref{maintheorem}. Thus we see that $W_1$ must be perpendicular to $V_1$, or in other words, contained in $V_2$.

From this, it now follows easily that $\tilde \Sigma$ is invariant under this copy of $SO(2)$, thought of as acting on $V_2$. Furthermore, $e_r \cdot \mathbf n$ must then change sign on $\tilde \Sigma$, because it does on $\Sigma$. Thus, we conclude that $\tilde \Sigma$ satisfies the hypotheses of Theorem~\ref{maintheorem}.
\end{description}
\end{proof}
\begin{lemma}
\label{transferentropyindex}Suppose that $\Sigma$ is an immersed, self-shrinking hypersurface in $\bbR^{n+1}$. Write $\Sigma$ as an isometric product $\tilde \Sigma \times \bbR^m$. If $\tilde \Sigma$ has entropy index at least $k$, then so does does $\Sigma$.
\end{lemma}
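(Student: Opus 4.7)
The plan is to construct $V$ as the space of normal vector fields on $\Sigma = \tilde\Sigma \times \bbR^m$ obtained by extending elements of $\tilde V$ trivially along the $\bbR^m$-factor. Specifically, for each $\tilde X \in \tilde V$ I would define $X$ on $\Sigma$ by $X(\tilde x, y) = \tilde X(\tilde x)$, using the canonical identification of the normal bundle of $\Sigma$ at $(\tilde x, y)$ with that of $\tilde\Sigma$ at $\tilde x$. The map $\tilde X \mapsto X$ is injective (evaluation at $y = 0$ recovers $\tilde X$), so $\dim V = k$ and each $X \in V$ is smooth and normal.

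First I would record the product formula $\lambda(M \times \bbR^m) = \lambda(M)$ for any immersed hypersurface $M$ in $\bbR^{n+1-m}$, which follows from $F_{(\tilde x_0, y_0), t_0}(M \times \bbR^m) = F^{(n-m)}_{\tilde x_0, t_0}(M)$, using that the Gaussian integral $(4\pi t_0)^{-m/2}\int_{\bbR^m} e^{-\abs{y-y_0}^2/(4t_0)}\,dy$ equals $1$. In particular $\lambda(\Sigma) = \lambda(\tilde\Sigma)$. Given a nonzero $X \in V$ coming from $\tilde X \in \tilde V$, I would first dispose of the special case of a product variation $\hat\Sigma_s := \tilde\Sigma_s \times \bbR^m$, with $\tilde\Sigma_s$ any variation of $\tilde\Sigma$ with initial $\tilde X$: the hypothesis on $\tilde V$ combined with the product formula gives $\lambda(\hat\Sigma_s) = \lambda(\tilde\Sigma_s) < \lambda(\tilde\Sigma) = \lambda(\Sigma)$ for small $s \neq 0$.

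The hard part is extending this strict decrease to an arbitrary variation $\Sigma_s$ of $\Sigma$ with the same initial vector field $X$. The key observation I would develop is that the second-order Taylor coefficient of $s \mapsto \lambda(\Sigma_s)$ at $s=0$ depends only on $X$, not on higher-order variation data. This uses that on $\Sigma = \tilde\Sigma \times \bbR^m$ we have $(0, y_0) \cdot \mathbf n \equiv 0$ for every $y_0 \in \bbR^m$, so the shrinker equation implies that $\Sigma$ is a critical point of $F_{(0, y_0), 1}$ for every $y_0$; hence the second variation of each such $F$ at $\Sigma$ depends only on the initial vector field, and by the $\bbR^m$-invariance of $X$ it is independent of $y_0$. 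The remaining issue is to handle the degenerate maximum set $\{(0, y_0, 1) : y_0 \in \bbR^m\}$ of $F_{x_0, t_0}(\Sigma)$. Assuming (after applying the lemma iteratively if necessary) that $\tilde\Sigma$ itself does not split off a further line, the Hessian of $F_{(\tilde x_0, y_0, t_0)}(\Sigma)$ in the transverse $(\tilde x_0, t_0)$-directions is strictly negative definite at $(0,1)$ by results of Colding--Minicozzi, so a standard envelope-theorem argument shows that any near-maximizer satisfies $(\tilde x_0^\ast, t_0^\ast - 1) = O(s)$, with corresponding contribution to $\lambda$ of order $o(s^2)$. This yields $\lambda(\Sigma_s) = \lambda(\hat\Sigma_s) + o(s^2)$, and since $\lambda(\hat\Sigma_s) - \lambda(\Sigma)$ is a strictly negative quadratic in $s$, the strict inequality $\lambda(\Sigma_s) < \lambda(\Sigma)$ follows for small $s \neq 0$.
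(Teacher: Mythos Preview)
The paper's proof is far shorter than yours and consists precisely of the step you label the ``special case'': it extends a given variation $\tilde\Sigma_s$ of $\tilde\Sigma$ to the product variation $\Sigma_s = \tilde\Sigma_s \times \bbR^m$, invokes the identity $\lambda(\Sigma_s) = \lambda(\tilde\Sigma_s)$ (with a reference to Colding--Minicozzi), and stops. It does not attempt to handle an arbitrary variation of $\Sigma$ with variation vector field $X$; it is content to produce one entropy-decreasing variation per nonzero $X \in V$.

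You correctly point out that Definition~\ref{entropyindexdefn}, read literally, asks for more: the entropy should drop along \emph{every} variation whose initial vector field is $X$, not just along one chosen representative. The paper's argument does not supply this, so the extra work you propose is aimed at a genuine gap between the definition and the proof as written (though in the only place the lemma is used, the entropy index of $\tilde\Sigma$ is obtained via Theorem~0.15 of \cite{cm}, whose proof does give the stronger conclusion).

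However, your attempt to close this gap has real problems. First, the hypothesis on $\tilde\Sigma$ only says $\lambda(\tilde\Sigma_s) < \lambda(\tilde\Sigma)$ for small $s\neq 0$; it gives no rate, so your assertion that $\lambda(\hat\Sigma_s) - \lambda(\Sigma)$ is ``a strictly negative quadratic in $s$'' is unjustified, and an $o(s^2)$ correction need not preserve strict negativity. Second, the envelope-theorem sketch is not rigorous: the entropy is a supremum over a noncompact parameter set with a degenerate (noncompact) maximum locus $\{(0,y_0,1):y_0\in\bbR^m\}$, and you have not shown it admits a second-order expansion in $s$, nor that near-maximizers can be localized uniformly in $y_0$. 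Third, reducing to the case in which $\tilde\Sigma$ does not split off a further line ``by applying the lemma iteratively'' is circular as written; you would need to set up an induction on $m$ and check that the entropy-index hypothesis transfers to the smaller factor, which is itself the content of the lemma.

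In summary: your product-variation paragraph reproduces the paper's entire proof; the remainder goes beyond the paper but, as it stands, does not succeed.
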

\begin{proof}
Given a variation $\tilde \Sigma_s$ of $\tilde \Sigma$, we can extend it to a variation of $\Sigma$ by taking $\Sigma_s = \tilde \Sigma_s \times \bbR^m$. It is fairly easy to see that $\lambda(\Sigma_s) = \lambda(\tilde \Sigma_s)$ (see the proof of Theorem~0.12 in Section~11.2 of~\cite{cm}).

This way, any variation of $\tilde \Sigma$ which is unstable for the entropy (in the sense of Definition~\ref{entropyindexdefn}) gives an unstable variation of~$\tilde \Sigma$. Since $\tilde \Sigma$ has a $k$-dimensional vector space worth of such variations, so does $\Sigma$.
\end{proof}
\begin{proof}[Proof of Corollary~\ref{entropycor}]Assume first that $\Sigma$ does not split off a line isometrically. By Theorem~\ref{maintheorem} or Corollary~\ref{maincor}, we can find a subspace~$V$, of dimension 3, consisting of $F$-unstable variation vector fields. Suppose that $X$ is a vector field in~$V$, and that $\Sigma_s$ is a variation with variation vector field~$X$. By Theorem~0.15 of~\cite{cm} (or more precisely, by the proof, which is found in section~7.2 of that work), we have $\lambda(\Sigma_s) < \lambda(\Sigma)$ for $s$ nonzero and sufficiently small. Since this holds for any $X \in V$, this shows that $\Sigma$ has entropy index at least 3.

If $\Sigma$ does split off a line, Theorem~0.15 of~\cite{cm} does not apply. However, we can write $\Sigma = \tilde \Sigma \times \bbR^m$, where $\tilde \Sigma$ does not split off a further line. By Lemma~\ref{splitofflinehypotheses}, $\tilde \Sigma$ also satisfies the hypotheses of either Theorem~\ref{maintheorem} or Corollary~\ref{maincor}. By the previous reasoning, this means that $\tilde \Sigma$ has entropy index at least 3, and by Lemma~\ref{transferentropyindex}, $\Sigma$ also has entropy index at least 3.
\end{proof}
\section{Regularity}
The analysis of the preceding sections assumes that the shrinker is smooth. In this section, we will make some remarks on the connection between index and regularity. 

The main theorem is this:
\begin{thm}
Let $\Sigma$ be an $n$-dimensional integral varifold in $\bbR^{n+1}$ which is stationary with respect to the functional $F_{0,1}$, and suppose that $\Sigma$ satisfies the volume bound $\vol(B_r(x) \cap \Sigma) \leq V r^n$ for all $0 < r < 1$. Assume that $n \leq 6$, and also that $\Sigma$ satisfies the $\alpha$-Structural Hypothesis of Wickramasekera(\cite{wickramasekera}) for some $\alpha \in (0, 1)$.Then the $F$-index of the regular part of $\Sigma$ is greater than or equal to the number of singular points of $\Sigma$.
\end{thm}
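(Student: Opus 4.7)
The plan is to associate to each singular point $p \in \sing \Sigma$ a compactly supported $F$-unstable variation $f_p$ of $\reg \Sigma$ whose support is a tiny neighborhood of~$p$ disjoint from the other singular points. The resulting family is automatically linearly independent, and I will arrange that every nonzero linear combination $\sum \alpha_p f_p$ is still $F$-unstable, producing a subspace of compactly supported unstable variation functions of dimension~$\#\sing\Sigma$.

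The first step is to analyze the tangent cones. Blowing $\Sigma$ up at~$p$ produces a tangent cone $C_p$ which is an $n$-dimensional stationary integral minimal cone in~$\bbR^{n+1}$: under blow-up the Gaussian weight becomes locally constant and the shrinker equation reduces to the ordinary minimal surface equation. Since $p$ is genuinely singular, Allard's theorem rules out $C_p$ being a multiplicity-one hyperplane; the $\alpha$-structural hypothesis together with $n \leq 6$ forces $C_p$, via Wickramasekera's theorem, to be a classical minimal cone with isolated apex singularity. By Simons' theorem for $n \leq 6$, every such non-flat minimal hypercone is strictly unstable, so there exists $\phi \in C^\infty_c(\reg C_p)$ with $\int_{C_p} (\abs{\nabla \phi}^2 - \abs{A_{C_p}}^2 \phi^2) < 0$; by a logarithmic radial cutoff (Simons' trick) one can take $\phi$ to be supported in an annulus $\{r_0 < \abs x < R_0\}$ bounded away from the apex.

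Next, transplant this unstable direction to~$\Sigma$. The rescalings $(\Sigma - p)/r$ converge to~$C_p$ as $r \to 0^+$, and on any fixed annular region away from the origin the convergence is smooth by Allard regularity. Fixing a small scale~$r_p$, $\reg \Sigma$ is a $C^{2,\alpha}$-small normal graph over $r_p C_p$ on the annulus $B_{r_p R_0}(p) \setminus B_{r_p r_0}(p)$. Pulling $\phi$ back through this graphing and composing with the rescaling yields $f_p \in C^\infty_c(\reg \Sigma)$ supported in that annulus with $[-f_p L f_p]_\Sigma \leq -\lambda_p\, [f_p^2]_\Sigma$, where $\lambda_p > 0$ differs from the Rayleigh quotient of $\phi$ on~$C_p$ by an error that vanishes as $r_p \to 0$. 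The Gaussian factor $e^{-\abs x^2/4}$ and the zeroth-order term $\abs A^2 + \tfrac 12$ in~$L$ contribute only bounded perturbations on the small ball $B_{r_p}(p)$ and are absorbed into that error.

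To conclude, choose the $r_p$ so small that (a) the balls $B_{r_p}(p)$ are pairwise disjoint, and (b) the tail integrals $[H^2]_{\supp f_p}$ and $[(y\cdot \mathbf n)^2]_{\supp f_p}$ are much smaller than their global counterparts (for the latter, handling the degenerate directions as in Lemma~\ref{timespaceuniform}). For any nonzero $f = \sum_p \alpha_p f_p$ and any $(y, h) \in \bbR^{n+1} \times \bbR$, the disjoint supports give $[-f L f]_\Sigma = \sum_p \alpha_p^2\, [-f_p L f_p]_\Sigma \leq -\lambda \sum_p \alpha_p^2 [f_p^2]_\Sigma$ with $\lambda = \min_p \lambda_p > 0$, and Cauchy-Schwarz together with the smallness in (b) bound the remaining cross and translation-dilation terms, so that optimizing the quadratic in $(h, y)$ in Proposition~\ref{secondvariationformula} still gives a strictly negative total. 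Hence every such~$f$ is $F$-unstable. The main obstacle is the transplantation step: one must convert the qualitative fact that $C_p$ is unstable into a quantitative unstable variation on $\reg\Sigma$ with a spectral gap that survives both truncation of the support and passage from area to $F_{0,1}$. This is precisely where the $\alpha$-structural hypothesis and $n \leq 6$ enter essentially, yielding both the classical structure of the tangent cone and, via Simons, its strict instability.
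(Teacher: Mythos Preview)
Your overall plan---produce one unstable variation per singular point with pairwise disjoint supports, then check that nonzero combinations remain $F$-unstable---is the same as the paper's. The essential difference is \emph{where} you invoke Wickramasekera's theorem, and your placement creates a gap.

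You pass to the tangent cone $C_p$ and assert that ``the $\alpha$-structural hypothesis together with $n\leq 6$ forces $C_p$, via Wickramasekera's theorem, to be a classical minimal cone with isolated apex singularity,'' and then invoke Simons. But Wickramasekera's regularity theorem requires \emph{stability}; it does not say that an arbitrary stationary integral varifold satisfying the structural hypothesis is smooth in low dimensions. So you cannot conclude that $C_p$ is smooth away from the apex, and hence Simons' theorem (which is about smooth hypercones) does not directly apply. The logic can be repaired only by running it as a dichotomy: if $C_p$ were stable, Wickramasekera would make it smooth and then it would be a hyperplane, contradicting singularity of $p$; hence $C_p$ is unstable as a varifold, i.e.\ there is some $\phi\in C^\infty_c(\reg C_p)$ with negative second variation of area. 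With that correction the transplantation step becomes viable (Allard gives smooth convergence on compact subsets of $\reg C_p$), but as written the key sentence is circular.

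The paper avoids this detour entirely. It applies the stability/regularity implication directly to $\Sigma$ in a neighborhood $U_i$ of each singular point: if $\Sigma\measurerestr U_i$ were $F$-stable, then (using Lemma~12.7 of \cite{cm}) it satisfies the almost-stability inequality for the conformal area functional on small balls, and Wickramasekera's theorem (in the form of Proposition~12.25 of \cite{cm}) forces $\Sigma\measurerestr U_i$ to be smooth---contradicting the presence of $x_i$. Thus each $U_i$ carries an $F$-unstable $\phi_i$, with no tangent-cone analysis, no Simons, and no transplantation. Your route, once patched, reaches the same destination but with substantially more machinery; the paper's contrapositive use of the regularity theorem is both shorter and exactly tailored to the hypotheses.
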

\begin{proof}
The proof is adapted from Section~12 of~\cite{cm}.

First of all, if $U$ is an open set in $\bbR^{n+1}$, we will call $\Sigma \measurerestr U$ \emph{$F$-stable} if any function $\phi$ which has compact support in $\reg \Sigma \cap U$ is a stable variation in the sense of Definition~\ref{unstabledefn}.

Assume that $\Sigma \measurerestr U$ is $F$-stable. Given $\epsilon > 0$, Lemma~12.7 of~\cite{cm} shows that
\begin{itemize}
\item $\Sigma$ is a stationary varifold in the Riemannian manifold $U \subset \bbR^{n+1}$ with respect to the metric $g_{ij} = e^{-\abs{x}^2/4} \delta_{ij}$.
\item For $r_0$ small enough (depending on $\epsilon$), if $\phi$ is a function supported in some ball $B_{r_0}(x_0) \cap \reg \Sigma$ satisfies
\begin{equation*}
\int_{\Sigma} \abs{A_g}^2_g \phi^2 d\vol_g \leq (1 + \epsilon) \int_\Sigma \abs{\nabla_g \phi}^2_g d\vol_g\text.
\end{equation*}
\end{itemize}
By the version of Wickramasekera's main regularity theorem (\cite{wickramasekera}) formulated as Proposition~12.25 in~\cite{cm}, in fact $\Sigma \measurerestr U$ is smooth.

Suppose now that we have $k$ distinct singular points $x_1, \ldots, x_k$ in $\Sigma$. Let $U_1, \ldots, U_k$ be disjoint open neighborhoods of $x_1, \ldots, x_k$ in $\bbR^{n+1}$. For each $i$, since $\Sigma \measurerestr U_i$ has a singularity, the preceding argument shows that we must have a function $\phi_i$ with support in $\reg \Sigma \cap U$ such that $\phi_i$ represents an unstable variation of $\Sigma$. Since these $\phi_i$ have disjoint supports, any linear combination $\sum_{i=1}^k a_i \phi_i$ will also represent an unstable variation. It follows that $\Sigma$ has $F$-index at least $k$.
\end{proof}

\bibliographystyle{alpha}
\bibliography{findex2}
\end{document}